\newtheorem{theorem}{Theorem}[section]
\newtheorem{lemma}[theorem]{Lemma}
\newtheorem{corollary}[theorem]{Corollary}
\newtheorem{prop}[theorem]{Proposition}
\newtheorem{definition}[theorem]{Definition}
\newtheorem{example}[theorem]{Example}
\newtheorem{remark}[theorem]{Remark}
\let\div\relax
\DeclareMathOperator{\div}{div}
\newcommand{\bam}[1]{\textbf{#1}}
\DeclareMathOperator{\Aut}{Aut}
\DeclareMathOperator{\Res}{Res}
\newcommand{\dif}{\mathrm{d}}
\DeclareMathOperator{\id}{Id}
\DeclareMathOperator{\Jac}{Jac}
\newcommand{\pround}[1]{\left( #1 \right)}
\newcommand{\psquare}[1]{\left[ #1 \right]}
\newcommand{\pbrace}[1]{\left\{ #1 \right\} }
\newcommand{\pangle}[1]{\left\langle #1 \right\rangle}
\subjclass[2020]{14E07, 14H55, 14Q05, 14K02, 14K25}
\newcolumntype{P}[1]{>{\centering\arraybackslash}p{#1}}
\title{Bring's curve: Old and New}
\author{H. W. Braden and Linden Disney-Hogg}
\address{
School of Mathematics and Maxwell Institute for Mathematical Sciences\\ The University of Edinburgh\\ 
Edinburgh EH9 3FD, Scotland, U.K.
}
\email{hwb@ed.ac.uk, A.L.Disney-hogg@sms.ed.ac.uk}
\begin{document}
\maketitle

\begin{abstract}
    Bring's curve, the unique Riemann surface of genus-4 with automorphism group $S_5$, has many exceptional properties.
    We review, give new proofs of, and extend a number of these including giving the complete realisation of the automorphism group for a plane curve model, identifying a new elliptic quotient of the curve and the modular curve $X_0(50)$, providing a complete description of the orbit decomposition of the theta characteristics, and identifying the unique invariant characteristic with the divisor of the Sz\"ego kernel. In achieving this we have used modern computational tools in Sagemath, Macaulay2, and Maple, for which notebooks demonstrating calculations are provided.

\end{abstract}
\section{Introduction}
Bring's curve, a genus-4 Riemann surface first introduced by Erland Bring in 1786 in relation to solutions of the quintic equation, is the unique genus-4 curve with automorphism group $S_5$, the largest possible group for curves of this genus. This automorphism group acts transitively on the Weierstrass points of the curve, which are all weight one, a very rare property of curves. The curve can moreover be seen to be an example of an elliptic modular surface, and has a unique (even) theta characteristic invariant under the action of the whole automorphism group. It shares all these properties with Klein's curve, a genus-3 curve which has been studied extensively \cite{Levy2001}. 

We aim to give a unified review of many of these aspects of Bring's curve, extend these results and provide connections between them.
This will include providing an explicit presentation of the $S_5$ group in terms of birational automorphisms of a plane-curve model, using this to completely describe the quotient structure of the curve, identifying fixed points of these quotients with geometrically significant points on the curve including the Weierstrass points, and utilising these geometric points to describe the orbit decomposition of odd theta characteristics on the curve. 

Moreover, we will construct an explicit birational map between the canonical embedding of the curve and a plane-curve model. This will allow us to use modern tools in SageMath, Maple, and Macaulay2 (all interfaced through Sage) to clarify and fix errors in the existing literature as well as finding the orbit decomposition of even theta characteristics of the curve. We shall also calculate the unique invariant theta characteristic on the curve as a vector in the Jacobian, making explicit previous work of others. These calculations are presented in Jupyter notebooks, available at \href{https://github.com/DisneyHogg/Brings_Curve}{\tt{https://github.com/DisneyHogg/Brings\_Curve}}. 

The structure of the papers is as follows, in \S\ref{sec: defining equations} we recall basic properties from the literature, provide the birational map between models of the curve and describe the period matrix and automorphism group of the curve. In \S\ref{sec: geometric points} we identify geometrically relevant orbits of points under the action of the automorphism group, notably the Weierstrass points $W$, for which we construct the associated holomorphic differentials with vanishing of order $4$ at $W$ and the meromorphic functions with poles only at $W$; indeed we shall describe the divisors of these completely. (Recall, for comparison, that while the pole of the Weierstrass $\wp$-function
is $z=0$, describing its zeros is nontrivial.)
In \S\ref{sec: quotients by subgroups} we use the action of the automorphism group to explicitly describe the quotient structure of the curve.
Using representation theory we elucidate much of the structure of the Jacobian of Bring's curve, giving new derivations of previously known results, and identifying new quotients of the curve. In doing so, we also discover isomorphisms (as opposed to isogenies) of quotients not explained by group theory alone, and these deserve further study. In \S\ref{sec: theta characteristics} we will develop the understanding of the theta characteristics on the curve, including explicitly the unique invariant one. 
Bring's curve has arisen in many disparate mathematical areas and we will use this paper to also make some of these results and interconnections more widely known; we will however not focus on either the modular aspects of Bring's curve or extension to positive characteristic here. 
Because of the wide-ranging nature of this paper we will conclude in \S\ref{sec: conclusion} by summarising our new results having by then placed these in context. We draw attention here to two areas we feel merit further explanation.

This work answers questions first raised in \cite{Braden2012}, and we will leave many expository results to that paper. For a source on background classical material, see \cite{Farkas1992}. Unless otherwise said explicitly, we shall assume the Riemann surfaces we discuss are all smooth. 

\vspace{0.1in}
\noindent\textbf{Acknowledgements.} We are grateful to Vanya Cheltsov for discussions. The research of LDH is supported by a UK Engineering and Physical Sciences Research Council (EPSRC) studentship. 

\section{Defining Equations and Elementary Properties}\label{sec: defining equations}

In this section we shall introduce Bring's curve describing its basic properties, its period matrix and its automorphisms. We begin with two descriptions: one as a model in $\mathbb{P}^4$ and the second as a plane curve. In particular we shall construct the birational map between these models, which we believe new. Such a map enables us to concretely translate calculations from one model to another and will be helpful later in \S\ref{sec: quotients by subgroups} when we look at quotients of the curve. 

\begin{definition}
\bam{Bring's Curve $\mathcal{B}$} is defined in $\mathbb{P}^4$ by the (homogeneous) equations\footnote{Note here we use the notation $H_k$ of \cite{Burns1983} for the symmetric sum, whereas \cite{Edge1978} uses $S_k$.}
\begin{equation}\label{eq: P4 model of Bring}
H_k := \sum_{i=1}^5 x_i^k = 0  , \quad k=1, 2, 3, 
\end{equation}
where we have taken the coordinates $[x_1 : x_2 : x_3 : x_4 : x_5] \in \mathbb{P}^4$. 
\end{definition}
We will call (\ref{eq: P4 model of Bring}) the
$\mathbb{P}^4$-model of Bring's curve.
Historically, this curve was attributed to Bring by Klein in his lectures on the icosahedron \cite{Klein1888}. Bring's improvement on the Tschirnhaus transformation enabled the general quintic to be brought to the Bring-Jerrard form $x^5 + ex+f=0$ \cite{Adamchik2003}: if $x_i$, $i=1,\dots, 5,$ are the roots of this equation then the symmetric sums $\sum_i x_i^k=0$ for $k=1, 2, 3$, and so the above curve emerges. Green \cite{Green1978} exploits this relation to Bring's curve to (implicitly) solve the quintic using automorphic forms. At this stage we may observe that $S_5\leq \Aut(\mathcal{B})$.

\begin{remark}
In the above definition we have implicitly taken $\mathbb{P}^4 = \mathbb{CP}^4$, and indeed this will be our definition throughout. At various junctures we will however highlight the number fields within $\mathbb{C}$ that are relevant fields of definition for morphisms of the curve, or geometrically important points on the curve.
\end{remark}

For many calculations a plane model of a curve in $\mathbb{P}^2$ is useful.
One of the main tools we will utilise in this paper is the Riemann surfaces module of SageMath \cite{sagemath2021} which requires such a model. As such we introduce the following.
\begin{definition}
The \bam{Hulek-Craig (HC) model} of Bring's curve is the (singular) plane curve in $\mathbb{P}^2$ given by 
\begin{equation}\label{eq: P2 model of Bring}
  F(X,Y,Z) :=
    X(Y^5+Z^5) + (XYZ)^2 - X^4YZ -2(YZ)^3  = 0  ,
\end{equation}
taking homogeneous coordinates $[X:Y:Z] \in \mathbb{P}^2$.
\end{definition}
This model was used in \cite{Hulek1985, Craig2002} where they studied the curves modular properties. Indeed Craig gives a parameterization of the curve in terms of modular functions following work of Ramanujan. Craig does not identify the curve as Bring's, whereas Hulek finds the equation as a hyperplane section of a surface that parameterises elliptic normal curves of degree 5, and then refers to previous results to connect it with Bring. This connection was first made in \cite{Naruki1978}, where the curve is attributed to Klein. Indeed, because of this interpretation, one can write down Bring's curve as 
\[
\overline{\mathcal{H}/\Gamma_0(2, 5)} \quad \text{where} \quad \Gamma_0(2,5) = \Gamma_0(2) \cap \Gamma(5) =  \left\lbrace\begin{pmatrix} a & b \\ c & d \end{pmatrix} \in SL_2(\mathbb{Z}) \, \big\vert \, \begin{array}{l} a \equiv d \equiv 1\pmod 5 \\ \phantom{d \equiv } \;  b \equiv 0  \pmod 5 \\ \phantom{d \equiv } \; c \equiv 0  \pmod{10} \end{array} \right \rbrace . 
\]
To our knowledge, \cite{Hulek1985} is the first occurrence of this plane model of the curve; Klein will write it down in \cite[p. 165, p. 242]{Klein1888} but the relation to a plane model of Bring's curve is not made explicit. We will call (\ref{eq: P2 model of Bring}) the HC-model of Bring's curve. In the following subsection we shall show the birational equivalence of (\ref{eq: P2 model of Bring}) and (\ref{eq: P4 model of Bring}).

Another plane model of Bring's curve is given by $y^5-(x+1)x^2(x-1)^{-1}=0$, introduced in \cite[Proposition 3.1]{Weber2005} by considering the curve as a cyclic cover of $\mathbb{P}^1$, see \S\ref{sec: quotients by subgroups}. 

\subsection{Basic Properties}
Towards proving the birational equivalence of the two models given for Bring's curve we begin with some basic properties of the curve. We introduce affine coordinates $(x,y)=(X/Z,Y/Z)$ such that the HC-model is $0=f(x,y)=F(x,y,1)$. 
Then $\Res_y(f(x,y),\partial_y f(x,y))=x^4(x^5-1)^2(256x^{10}-837 x^5+3456)$. Investigation of the vanishing of this resultant leads to the following.
\begin{lemma}[\cite{Braden2012}]\label{HCdoublepoints}
 The only singular (double) points in the HC-model of the curve are
$V_k = [\zeta^k : \zeta^{2k} : 1]$ for $k=0, \dots, 4$, where $\zeta=\exp(2\pi i/5)$,  and $V_5 = [1:0:0]$.
\end{lemma}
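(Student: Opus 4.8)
The plan is to locate the singular points of the plane curve by finding the common zeros of $F$ and all its partial derivatives, which for a plane curve amounts to solving $F = \partial_X F = \partial_Y F = \partial_Z F = 0$. Working in the affine chart $Z=1$, the singular points in that chart are the common solutions of $f(x,y) = \partial_x f(x,y) = \partial_y f(x,y) = 0$, and the problem reduces to an elimination computation. The resultant $\Res_y(f, \partial_y f) = x^4(x^5-1)^2(256x^{10}-837x^5+3456)$ supplied just above the statement is precisely the tool for this: its vanishing gives the possible $x$-coordinates of points where $f$ and $\partial_y f$ vanish simultaneously, so I would first read off the candidate $x$-values from its irreducible factors.

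First I would analyse each factor of the resultant. The factor $x^4$ forces $x=0$; the factor $(x^5-1)^2$ forces $x = \zeta^k$ for $k=0,\dots,4$ (the fifth roots of unity); and the remaining factor $256x^{10}-837x^5+3456$ contributes further candidate $x$-values. For each such $x$ I would substitute back into $f(x,y)=0$ and $\partial_y f(x,y)=0$ and solve for $y$, then impose the third condition $\partial_x f(x,y)=0$ to test which candidates are genuinely singular rather than merely branch points where $\partial_y f$ vanishes. The expectation is that the roots of the degree-ten factor $256x^{10}-837x^5+3456$ do not yield common solutions of all three partials (they correspond to ordinary ramification, not singularities), and can thus be discarded, while $x=\zeta^k$ produces the double points $V_k = [\zeta^k:\zeta^{2k}:1]$ with a consistent $y=\zeta^{2k}$.

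The chart $Z=1$ does not see points at infinity, so separately I would examine the line $Z=0$. Setting $Z=0$ in $F$ gives $F(X,Y,0) = XY^5$, whose zeros on the line at infinity are $[1:0:0]$ and $[0:1:0]$; I would check each of these (and the point $[0:0:1]$, the origin of the $Z=1$ chart corresponding to $x=y=0$) directly against the homogeneous partials to determine singularity, expecting $V_5 = [1:0:0]$ to emerge as the only additional singular point. To confirm each listed point is an honest double point (node or cusp) rather than a point of higher multiplicity, I would verify that the lowest-order terms of $F$ in local coordinates centred at the point form a nonzero quadratic, i.e. that the Hessian/second-order part is nonvanishing.

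The main obstacle I anticipate is twofold: cleanly disposing of the degree-ten factor $256x^{10}-837x^5+3456$ by showing its roots fail the remaining equations, and correctly handling the points at infinity and the possible coordinate $x=0$, since the resultant in the affine chart can flag spurious candidates arising from the leading coefficient in $y$ dropping rank rather than from a true singularity. Since this is essentially an elimination-theory verification, the computation is routine but the bookkeeping of which candidate $x$-values survive all three partial-derivative conditions is where care is required; I would lean on a computer algebra check (as the paper does via SageMath/Macaulay2) to confirm the final list $V_0,\dots,V_4,V_5$ is complete.
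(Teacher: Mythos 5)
Your proposal is correct and follows essentially the same route as the paper, which simply computes $\Res_y(f,\partial_y f)=x^4(x^5-1)^2(256x^{10}-837x^5+3456)$ and investigates its vanishing (deferring details to \cite{Braden2012}): the candidate $x$-values from the factors $x^4$ and $(x^5-1)^2$ are filtered by the remaining conditions $f=\partial_x f=0$, the degree-ten factor is discarded as ordinary ramification (consistent with the ramification divisor $R_F$ stated in the subsequent corollary), and the line at infinity is checked separately to pick up $V_5=[1:0:0]$ while discarding $[0:1:0]$. Your attention to the spurious $x=0$ candidate (where $\partial_x f(0,0)=1\neq 0$) and to verifying the quadratic leading part at each $V_k$ is exactly the bookkeeping the paper leaves implicit.
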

We shall return these points in due course; each of these singular points desingularizes to two points. We also find
\begin{lemma}\label{lemma: local expansion in Bring's curve}
In the HC-model we have local
expansions about the following points on the curve
\begin{align*}
a&:=[0:0:1]\simeq[2t^3:t:1],& b&:=[0:1:0]\simeq[2t^2:1/t:1],\\
c&:=[1:0:0]_2\simeq[1:t:t^4],& d&:=[1:0:0]_1\simeq[1:t^4:t].
\end{align*}
\end{lemma}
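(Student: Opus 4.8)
The plan is to work chart by chart, dehomogenising $F$ at each of the four points and reading off the leading Puiseux behaviour from the Newton polygon of the resulting affine equation. First I would record the relevant affine equations obtained by setting $Z=1$, $Y=1$, and $X=1$ respectively; writing $f(x,y)=F(x,y,1)=x+xy^5+x^2y^2-x^4y-2y^3$ and observing that the substitutions used to reach $b$ and the pair $c,d$ produce local equations of the same shape. This reflects an underlying symmetry of the curve that can be exploited to avoid repeating the same computation twice.

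For the points $a=[0:0:1]$ and $b=[0:1:0]$ I would first check that they lie on $\mathcal{B}$ and are not among the singular points $V_0,\dots,V_5$ of Lemma \ref{HCdoublepoints}, so that each is a smooth point carrying a single branch. In the chart $Z=1$ the Newton polygon of $f$ at the origin has its lowest edge joining the monomials $x$ and $-2y^3$; the dominant balance $x\sim 2y^3$ then forces the branch $x=2y^3+\cdots$, so that $t=y$ is a local uniformiser and the expansion is $[2t^3:t:1]$. The point $b$ is handled identically in the chart $Y=1$, where the dehomogenised equation has the same form, yielding $[X:Y:Z]=[2t^3:1:t]=[2t^2:1/t:1]$.

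For $c$ and $d$, both lying over the node $V_5=[1:0:0]$, I would pass to the chart $X=1$, where the affine equation becomes $h(s,r)=s^5+r^5+s^2r^2-sr-2s^3r^3$ with $(s,r)=(Y/X,Z/X)$. Its quadratic part $-sr$ factors into two distinct linear forms, confirming the ordinary node predicted by Lemma \ref{HCdoublepoints} and giving two smooth branches tangent to $s=0$ and $r=0$. Solving the dominant balance $sr\sim s^5$ along one edge of the Newton polygon gives $r=t^4+\cdots$ with $s=t$, i.e. $[1:t:t^4]$, while the balance $sr\sim r^5$ along the other edge gives $[1:t^4:t]$; these are precisely the two branches, which I would match to the labels $c=[1:0:0]_2$ and $d=[1:0:0]_1$. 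The main obstacle, and the only step requiring genuine care, is the resolution of the node at $V_5$: one must verify that the two branches are smooth and correctly assign the leading exponents to the labels $_1,_2$, whereas the extraction of leading terms at the smooth points $a$ and $b$ is routine.
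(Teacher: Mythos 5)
Your proposal is correct. The paper states this lemma without proof (deferring such computations to \cite{Braden2012} and the accompanying notebooks), and your Newton-polygon argument is exactly the standard verification one would supply: the balances $x\sim 2y^3$ at the smooth points $a$ and $b$ (which do have identical local equations by the $Y\leftrightarrow Z$ symmetry of $F$) and the two nodal branches $r\sim s^4$, $s\sim r^4$ at $V_5$ all check out, and the assignment of the subscripts $_1$, $_2$ is merely the convention fixed by the stated expansions.
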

We see from this that
\begin{corollary}
$$
\div(x)=3a+2b-4c-d,\qquad \div(y)=a-b-3c+3d,\qquad R_F = 2a + b+ 3c + \sum_i r_i,
$$ 
where $R_F$ is the ramification divisor corresponding to the map $x:\mathcal{B} \to \mathbb{P}^1$ and the $r_i$ are the roots of the polynomial $256 x^{10} - 837 x^5 + 3456$ appearing in the resultant.
\end{corollary}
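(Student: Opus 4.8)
The plan is to read $\div(x)$ and $\div(y)$ straight off the local parametrisations of Lemma~\ref{lemma: local expansion in Bring's curve}, and then to assemble $R_F$ from this local data together with the factorisation of the resultant and Riemann--Hurwitz. Substituting the four parametrisations into $x=X/Z$ and $y=Y/Z$ gives $x\sim 2t^3,\,2t^2,\,t^{-4},\,t^{-1}$ and $y\sim t,\,t^{-1},\,t^{-3},\,t^{3}$ at $a,b,c,d$ respectively, so the orders $\mathrm{ord}_a,\mathrm{ord}_b,\mathrm{ord}_c,\mathrm{ord}_d$ of $x$ and of $y$ are immediately the claimed coefficients. To see that these four points exhaust the zeros and poles I would argue projectively: a zero of $x$ forces $X=0$ on $\mathcal{B}$, whence $F=0$ collapses to $-2(YZ)^3=0$ and the point is $a=[0:0:1]$ or $b=[0:1:0]$; a pole forces $Z=0$, whence $F=0$ collapses to $XY^5=0$ and the point is $b$ or $V_5=[1:0:0]$, which desingularises to $c$ and $d$ by Lemma~\ref{HCdoublepoints}. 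The analogous analysis for $y$ (using $Y=0\Rightarrow XZ^5=0$ and $Z=0\Rightarrow XY^5=0$) confines its zeros and poles to the same four points, and each divisor visibly has degree zero as a check.

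For $R_F$ I would first note that the pole part $4c+d$ of $\div(x)$ has degree $5$, so $x:\mathcal{B}\to\mathbb{P}^1$ has degree $5$, and that the ramification over $0$ and $\infty$ is encoded in $\div(x)$ itself: the fibre over $0$ is $3a+2b$, giving $e_a=3,\,e_b=2$ and a contribution $2a+b$, while the fibre over $\infty$ is $4c+d$, giving $e_c=4,\,e_d=1$ and a contribution $3c$. The remaining ramification lies over finite nonzero $x$ in the smooth locus, where it is detected by the vanishing of $\Res_y(f,\partial_y f)=x^4(x^5-1)^2(256x^{10}-837x^5+3456)$ (which agrees with the discriminant in $y$ up to the leading coefficient $x$, nonvanishing away from $x=0$). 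Here the factor $x^4$ sits over $x=0$, already accounted for, and $(x^5-1)^2$ sits over the nodes $V_0,\dots,V_4$; these nodes are \emph{not} ramification points, since a transverse crossing of two smooth branches makes two roots $y_i(x)$ collide to first order ($y_i-y_j\sim(x-x_0)$) and so contributes a square, order $2$, to the discriminant while contributing nothing to $R_F$. This leaves precisely the roots $r_i$ of $256x^{10}-837x^5+3456$ as the residual ramification locus.

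The last step is to pin down the $r_i$. Since each $r_i$ is a simple root of the resultant, the discriminant vanishes there to order $1$; as a ramification point of index $e$ contributes order $e-1$ to the discriminant (and $e-1$ to $R_F$), a simple zero forces a single point of index $e=2$ over $r_i$, contributing exactly $1$. Riemann--Hurwitz then closes the count: $2g-2=6=5\cdot(-2)+\deg R_F$ gives $\deg R_F=16$, matching $2a+b+3c+\sum_i r_i$ with $\deg=2+1+3+10=16$, and confirming no ramification has been overlooked. The main obstacle is the bookkeeping in this middle step — one must correctly separate the two sources of vanishing of the discriminant, the tangential ramification (order $1$, contributing to $R_F$) from the transverse node crossings (order $2$, contributing nothing), and verify that the $(x^5-1)^2$ factor is consumed entirely by the unramified singular points so that the residual factor is exactly the degree-$10$ polynomial with ten distinct simple roots.
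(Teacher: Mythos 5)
Your proposal is correct and follows the route the paper intends: the corollary is stated immediately after Lemma~\ref{lemma: local expansion in Bring's curve} with the remark ``we see from this that,'' i.e.\ the divisors are meant to be read off the four local parametrisations, with the residual ramification supplied by the factorisation of $\Res_y(f,\partial_y f)$ given just above. Your additional bookkeeping — confining zeros and poles projectively to $\{a,b,c,d\}$, discarding the $(x^5-1)^2$ factor as coming from the transverse node crossings, and closing the count with Riemann--Hurwitz — correctly fills in the details the paper leaves implicit.
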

\begin{lemma}[\cite{Braden2012}]\label{lemma: basis of differentials}
The genus of Bring's curve is $g=4$ and we have the ordered basis of (unnormalised) differentials
\begin{align*}
  v_1 &= \frac{(y^3-x)\dif x}
    {\partial_y f(x,y)}, &
  v_2 &= \frac{(y^2x-1)\dif x}
    {\partial_y f(x,y)}, &
  v_3 &= \frac{(y-x^2)\dif x}
    {\partial_y f(x,y)}, &
  v_4 &= \frac{y(x^2-y)\dif x}
    {\partial_y f(x,y)}.
\end{align*}
\end{lemma}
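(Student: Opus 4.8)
The plan is to treat the two assertions separately, with the genus computation feeding the dimension count that pins down the basis. For the genus I would use the degree--genus formula with nodal corrections. The polynomial $F$ in \eqref{eq: P2 model of Bring} is homogeneous of degree $6$, so the HC-model is a plane sextic of arithmetic genus $\binom{6-1}{2}=10$. By Lemma~\ref{HCdoublepoints} its only singularities are the six points $V_0,\dots,V_5$, each an ordinary node; as noted there each desingularises to two points, so each has delta-invariant $1$. Hence the geometric genus of the normalisation is $g = 10 - \sum_{P}\delta_P = 10 - 6 = 4$.

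For the basis I would invoke the classical adjoint description of holomorphic differentials on a nodal plane curve: they are exactly the forms $\dfrac{A(x,y)\,\dif x}{\partial_y f}$ with $A$ a polynomial of degree $\le d-3 = 3$ whose projective closure passes through every node. Cubics in $\mathbb{P}^2$ form a $10$-dimensional space, and the adjoint theorem guarantees the resulting space of differentials has dimension exactly $g=4$ (equivalently, the six nodes impose six independent linear conditions on cubics). It therefore suffices to exhibit four linearly independent adjoints.

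Homogenising, the numerators of $v_1,\dots,v_4$ are the cubics $Y^3-XZ^2$, $XY^2-Z^3$, $Z(YZ-X^2)$ and $X^2Y-Y^2Z$. At the affine nodes $V_k=[\zeta^k:\zeta^{2k}:1]$ each vanishes by direct substitution using $\zeta^5=1$ (for instance $y^3-x \mapsto \zeta^{6k}-\zeta^k = 0$, $y^2x-1\mapsto \zeta^{5k}-1=0$, $y-x^2\mapsto \zeta^{2k}-\zeta^{2k}=0$), and each of the four cubics vanishes at $V_5=[1:0:0]$; thus all four are adjoints. Since their supporting monomials are pairwise disjoint they are manifestly linearly independent, so by the dimension count they form a basis of the space of holomorphic differentials.

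I expect the step needing the most care — and hence the main obstacle — to be regularity at the points over the line at infinity and at the two branches through each node, where one must check that the vanishing of $A$ genuinely cancels the poles of $\dif x/\partial_y f$ rather than merely matching degrees. The cleanest way to remove all doubt, and to make the argument self-contained without quoting the adjoint theorem, is to compute $\div(v_i)$ directly from the local expansions of Lemma~\ref{lemma: local expansion in Bring's curve} together with $\div(x)$, $\div(y)$ and $R_F$ from the preceding corollary, and to verify that each divisor is effective of degree $2g-2=6$; this simultaneously rules out spurious poles at the desingularised nodes and at the points $c,d$ above infinity.
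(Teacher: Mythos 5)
Your proposal is correct, but note that the paper offers no proof of this lemma at all --- it is quoted from \cite{Braden2012} --- so there is no in-text argument to match; what the paper does implicitly supply is an alternative route to the genus, since the preceding corollary gives $\div(x)=3a+2b-4c-d$ (so $x$ has degree $5$) and $\deg R_F = 2+1+3+10=16$, whence Riemann--Hurwitz yields $2g-2 = 5(-2)+16=6$. Your route via the degree--genus formula plus the adjoint theorem for nodal sextics is the standard one and works: the four homogenised numerators $Y^3-XZ^2$, $XY^2-Z^3$, $Z(YZ-X^2)$, $X^2Y-Y^2Z$ do all vanish at $V_0,\dots,V_5$, and their disjoint supports give independence, so once the adjoint space is known to have dimension $g=4$ you are done. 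The adjoint-theorem approach buys you the regularity at the nodes and at infinity for free, which is exactly the point you correctly flag as the delicate one.

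The one inference that is not quite airtight is ``each desingularises to two points, so each has delta-invariant $1$'': a tacnode also has two branches but has $\delta=2$, so you must additionally check that the two branches at each singular point have distinct tangents. At $V_5=[1:0:0]$ this is visible from the local expansions $[1:t:t^4]$ and $[1:t^4:t]$ of Lemma \ref{lemma: local expansion in Bring's curve}; at $V_0=[1:1:1]$ a one-line Hessian computation gives $f_{xy}^2-f_{xx}f_{yy}=125\neq 0$, and the remaining $V_k$ are carried to $V_0$ by the collineation $S$, so all six are ordinary nodes and $g=10-6=4$ as claimed. Your suggested fallback --- computing $\div(v_i)$ directly from the local data of Lemma \ref{lemma: local expansion in Bring's curve} and checking each is effective of degree $6$ --- is indeed the cleanest way to make the argument self-contained, and is in the spirit of the divisor computations the paper carries out elsewhere.
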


These differentials, while independent as holomorphic differentials, do satisfy an algebraic relation.

\begin{lemma}
The canonical embedding of Bring's curve in $\mathbb{P}\sp3=\pbrace{[v_1:v_2:v_3:v_4]}$ lies on the quadric
\begin{equation}\label{quaddif}
\mathcal{Q}:  v_1v_2+v_3v_4=0  . 
\end{equation}
\end{lemma}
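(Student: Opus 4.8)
The plan is to reduce the claimed quadratic relation among the canonical coordinates to a single polynomial identity on the affine curve $f(x,y)=0$. Since each of the four differentials of Lemma~\ref{lemma: basis of differentials} carries the common factor $\dif x / \partial_y f(x,y)$, the products $v_1 v_2$ and $v_3 v_4$ both acquire the factor $\left( \dif x / \partial_y f \right)^2$. Consequently, the vanishing of $v_1 v_2 + v_3 v_4$ along the canonical image is equivalent to the vanishing, in the function field of $\mathcal{B}$, of the polynomial numerator
\[
N(x,y) := (y^3 - x)(y^2 x - 1) + y(y - x^2)(x^2 - y).
\]

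First I would expand $N$ as an honest polynomial in $x$ and $y$. The factor $(y-x^2)(x^2-y) = -(y-x^2)^2$ takes care of the second summand, each summand contributes a $-y^3$ term, and the cross terms in $x^2 y^2$ and $x^4 y$ organise themselves, so that after collecting one is left with $N(x,y) = x y^5 + x^2 y^2 - x^4 y - 2y^3 + x$. The key observation is then that this is precisely $f(x,y) = F(x,y,1)$, the defining polynomial of the HC-model~\eqref{eq: P2 model of Bring} written in the affine coordinates $(x,y)=(X/Z,Y/Z)$. Hence $N$ vanishes identically on $\mathcal{B}$, and therefore so does $v_1 v_2 + v_3 v_4$; as $p \mapsto [v_1(p) : v_2(p) : v_3(p) : v_4(p)]$ is the canonical map, its image lies on the quadric $\mathcal{Q}$ of~\eqref{quaddif}.

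I expect no serious obstacle: the entire content is the identity $N = f$, and the only thing to watch is the bookkeeping in the expansion, in particular the signs arising from $y(y-x^2)(x^2-y) = -y(y-x^2)^2$. It is worth remarking that $N$ equals $f$ \emph{exactly}, rather than merely being a nonzero multiple of it; this is what one should expect from the general theory, since a non-hyperelliptic canonical curve of genus $4$ lies on a \emph{unique} quadric in $\mathbb{P}^3$, so~\eqref{quaddif} is necessarily that quadric, and there is no room for an extra scalar or polynomial factor once the coefficient of $xy^5$ is matched.
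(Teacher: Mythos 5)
Your proposal is correct: the identity $(y^3-x)(y^2x-1)+y(y-x^2)(x^2-y)=xy^5+x^2y^2-x^4y-2y^3+x=f(x,y)$ checks out, so $v_1v_2+v_3v_4$ vanishes on the curve. The paper states this lemma without proof, treating it as a routine verification, and your direct expansion of the numerators is exactly the omitted check.
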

While all full-rank quadrics in $\mathbb{P}^3$ are isomorphic to $\mathbb{P}^1 \times \mathbb{P}^1$ \cite{Vakil2010}, the standard proof of which goes via showing every such quadric can be written in some coordinate system as $\mathcal{Q}$, we will subsequently want an explicit form of the isomorphism.
\begin{prop}\label{prop: quadric of canonical embedding isomorphic to PxP}
$\mathcal{Q}\simeq \mathbb{P}\sp1\times\mathbb{P}\sp1$.
\end{prop}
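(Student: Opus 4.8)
The plan is to exhibit an explicit isomorphism $\mathbb{P}^1 \times \mathbb{P}^1 \to \mathcal{Q}$ via the Segre embedding adapted to the particular quadratic form $v_1 v_2 + v_3 v_4$. The standard Segre map sends $([s:t],[u:w])$ to $[su:sw:tu:tw]$, whose image is the quadric $z_0 z_3 - z_1 z_2 = 0$ in coordinates $[z_0:z_1:z_2:z_3]$. So first I would match our form to the Segre form. Writing $\mathcal{Q}$ as $v_1 v_2 - (-v_3) v_4 = 0$, I want a linear change of coordinates on $\mathbb{P}^3$ taking $v_1 v_2 + v_3 v_4$ to $z_0 z_3 - z_1 z_2$; for instance setting $z_0 = v_1$, $z_3 = v_2$, $z_1 = v_3$, $z_2 = -v_4$ gives $z_0 z_3 - z_1 z_2 = v_1 v_2 + v_3 v_4$ exactly, so no scaling or field extension is needed.

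Composing, the explicit map I would write down is
\begin{equation}\label{eq: segre for Q}
\Phi \colon \mathbb{P}^1 \times \mathbb{P}^1 \to \mathcal{Q}, \qquad ([s:t],[u:w]) \mapsto [su : tu : -tw : sw],
\end{equation}
where the target coordinates are $[v_1:v_2:v_3:v_4]$. One checks immediately that the image satisfies $v_1 v_2 + v_3 v_4 = (su)(tu) + (-tw)(sw) = stu^2 - stw^2$; so in fact I must be slightly more careful with the pairing. The correct assignment pairs the two factors of each monomial so that the two products in $v_1 v_2$ and $v_3 v_4$ cancel: taking $v_1 = su$, $v_2 = tw$, $v_3 = sw$, $v_4 = -tu$ yields $v_1 v_2 + v_3 v_4 = stuw - stuw = 0$ identically, which is what we want.

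With $\Phi$ defined this way the remaining steps are routine verifications. First, $\Phi$ is well-defined and lands in $\mathcal{Q}$, by the identity just computed. Second, $\Phi$ is a morphism of projective varieties since its components are bihomogeneous of bidegree $(1,1)$ and have no common zero on $\mathbb{P}^1 \times \mathbb{P}^1$. Third, I would construct the inverse explicitly on the standard affine charts of $\mathcal{Q}$: away from the locus where a coordinate vanishes, one recovers $[s:t]$ and $[u:w]$ as ratios of the $v_i$, e.g.\ $[s:t] = [v_1 : -v_4] = [v_3 : v_2]$ and $[u:w] = [v_4 : -v_2]$, say, checking these agree on overlaps using the quadric relation. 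This shows $\Phi$ is bijective with regular inverse, hence an isomorphism.

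The main obstacle here is purely bookkeeping rather than conceptual: getting the sign and index conventions in the Segre pairing to align with the specific form $v_1 v_2 + v_3 v_4$, and verifying that the inverse rational map is everywhere regular on $\mathcal{Q}$ (the two coordinate expressions for each $\mathbb{P}^1$ factor must be shown to patch together using $v_1 v_2 = -v_3 v_4$). Since the quadric is full-rank and smooth, the abstract isomorphism is guaranteed; the only real work is recording the explicit formulas cleanly so they can be reused in later sections when studying the two rulings of $\mathcal{Q}$ and the induced $S_5$-action.
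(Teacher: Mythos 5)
Your proposal is correct and takes essentially the same approach as the paper: both exhibit a sign-adjusted Segre map $\mathbb{P}^1\times\mathbb{P}^1\to\mathcal{Q}$ (after your mid-proof correction, your map $[su:tw:sw:-tu]$ agrees with the paper's $[uz:vw:vz:-uw]$ up to the harmless swap $v_3\leftrightarrow -v_4$) and then invert it explicitly by ratios of the $v_i$ patched across charts using the relation $v_1v_2=-v_3v_4$. The paper merely packages the same data slightly differently, proving injectivity by a short case analysis and writing the inverse as a four-case formula, whereas you phrase it as chart-by-chart agreement; these are the same argument.
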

\begin{proof}
Consider the map 
$\mathcal{Q} \to \mathbb{P}^1 \times \mathbb{P}^1$, 
a modification of the Segre embedding, given by
$$
\varphi: \mathbb{P}\sp1\times\mathbb{P}\sp1\rightarrow \mathbb{P}\sp3,\qquad
\varphi([u:v],[z:w])=[uz:vw:vz:-uw]:=[v_1:v_2:v_3:v_4]  .
$$
Then $\varphi([u:v],[z:w])\in \mathcal{Q}$.

The map $\varphi$ is injective for suppose $\varphi([u:v],[z:w])=\varphi([u':v'],[z':w'])$. Consider first that $v_4\ne0$. Then
$$
-\frac{v_1}{v_4}=\frac{z}{w}=\frac{z'}{w'},\qquad -\frac{v_2}{v_4}=\frac{v}{u}=\frac{v'}{u'} , 
$$
and so correspond to the same point in $\mathbb{P}\sp1\times\mathbb{P}\sp1$. Next suppose that $0=-v_4=uw=u'w'$ and without loss of generality that $u=0$. If $u'=0$ we have injectivity, so suppose that $u'\ne0$. Then $w'=0$ and $[0:vw:vz:0]=[u'z':0:v'z':0]$ requiring $z'=0$ and a contradiction. Thus we have that $\varphi$ is injective.

To show the surjectivity of $\varphi$ we may assume without loss of generality that $v_4\ne0$. Then set
$$
-\frac{v_1}{v_4}=\frac{z}{w},\qquad -\frac{v_2}{v_4}=\frac{v}{u}  .
$$
Indeed we can spot the inverse map $\varphi^{-1} : \mathcal{Q} \to \mathbb{P}^1 \times \mathbb{P}^1$ to be given by 
\[
\varphi^{-1}([v_1:v_2:v_3:v_4]) = \left \lbrace \begin{array}{cc}
    ([v_1:v_3], [v_3:v_2]) & \pbrace{v_1, v_3} \neq \pbrace{0} \neq \pbrace{v_2, v_3}, \\
    ([-v_4:v_2], [v_3:v_2]) & \pbrace{v_1, v_3} = \pbrace{0} \neq \pbrace{v_2, v_3} ,\\
    ([v_1:v_3], [v_1:-v_4]) & \pbrace{v_1, v_3} \neq \pbrace{0} = \pbrace{v_2, v_3} ,\\
    ([-v_4:v_2], [v_1:-v_4]) & \pbrace{v_1, v_3} = \pbrace{0} = \pbrace{v_2, v_3}.
\end{array} \right .
\]
\end{proof}

\begin{prop}[\cite{Braden2012}]\label{prop: Canonical divisor of Bring's curve}
The canonical divisor class on Bring's curve is $[\mathcal{K}_{\mathcal{B}}] = [a+2b+3c]$.
\end{prop}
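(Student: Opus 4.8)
The plan is to exhibit a single holomorphic differential whose divisor is exactly $a + 2b + 3c$, after which the statement follows at once, since the canonical class is represented by the divisor of any nonzero holomorphic differential. By Lemma \ref{lemma: basis of differentials} we have an explicit basis $v_1, \dots, v_4$, and I would work with the convenient choice $v_3 = (y - x^2)\,\dif x/\partial_y f$. Being holomorphic on a curve of genus $4$, its divisor $\div(v_3)$ is effective of degree $2g - 2 = 6$; the whole problem is then to locate its six zeros.

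The key computational step is to read off $\mathrm{ord}_P(v_3)$ at the four distinguished points $P \in \{a,b,c,d\}$ using the local expansions of Lemma \ref{lemma: local expansion in Bring's curve}. Writing $\partial_y f = 5xy^4 + 2x^2 y - x^4 - 6y^2$, I would substitute the given parametrisations $x=x(t),\, y=y(t)$ at each point and extract the leading order in $t$ of the three factors $y-x^2$, $\dif x$, and $\partial_y f$ separately, then add. For instance at $a\simeq[2t^3:t:1]$ one finds $y-x^2\sim t$ (order $1$), $\dif x\sim t^2\,\dif t$ (order $2$), and $\partial_y f\sim -6t^2$ (order $2$), giving $\mathrm{ord}_a(v_3)=1$; the analogous substitutions yield $\mathrm{ord}_b(v_3)=2$, $\mathrm{ord}_c(v_3)=3$, and $\mathrm{ord}_d(v_3)=0$.

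Since the orders at $a,b,c$ already sum to $1+2+3 = 6 = \deg \mathcal{K}_{\mathcal{B}}$, and $\div(v_3)$ is effective of total degree exactly $6$, all remaining local orders must vanish, so there are no further zeros. Hence $\div(v_3) = a+2b+3c$ on the nose, and $[\mathcal{K}_{\mathcal{B}}] = [a+2b+3c]$. As a consistency check one may note $v_4 = -y\,v_3$, so that $\div(v_4)=\div(y)+\div(v_3) = 2a+b+3d$, which differs from $a+2b+3c$ by $\div(y)$ and hence lies in the same class.

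The one place demanding care is the extraction of $\mathrm{ord}(\partial_y f)$ at $b$ and $c$, where the two most singular monomials nearly cancel in leading order (at $c$ the terms $5t^{-16}$ and $-t^{-16}$ combine to $4t^{-16}$ rather than annihilating), so one must confirm the surviving leading coefficient is nonzero; this is the main obstacle, and it is the reason I prefer pinning down the orders at $a,b,c,d$ directly rather than elsewhere. Everything else rests only on $v_3$ being genuinely holomorphic, which is the content of Lemma \ref{lemma: basis of differentials}. A more structural alternative would instead compute $\div(\dif x)$ from the ramification divisor $R_F$ and the polar divisor $4c+d$ of $x$, and then $\div(\partial_y f)$ globally; there the resultant roots $r_i$ and the nodal branch points enter but cancel against one another and against the zeros of $y-x^2$ at the nodes, leaving the same answer at the cost of heavier bookkeeping.
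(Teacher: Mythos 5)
Your proof is correct, and it takes a genuinely different route from the paper's. The paper's stated proof is essentially a one-line hint: it invokes $\div(\dif x) = -2(4c+d) + R_F$, i.e.\ it represents the canonical class by the meromorphic differential $\dif x$, whose divisor is read off from the polar divisor of $x$ and the ramification divisor of the degree-$5$ map $x:\mathcal{B}\to\mathbb{P}^1$; the remaining reduction to $a+2b+3c$ (which, as you note, requires the ten points $r_i$ over the roots of $256x^{10}-837x^5+3456$ to cancel against the divisor of $\partial_y f$) is delegated to the accompanying computer-algebra notebook. You instead represent the class by the \emph{holomorphic} differential $v_3$ and compute its divisor outright: the local orders $1,2,3,0$ at $a,b,c,d$ that you extract from Lemma \ref{lemma: local expansion in Bring's curve} are all correct (I verified the leading coefficients, including the non-cancellations $10t^{-2}-6t^{-2}=4t^{-2}$ at $b$ and $5t^{-16}-t^{-16}=4t^{-16}$ at $c$ that you rightly flag), and the degree argument --- effectivity plus $1+2+3=6=2g-2$ --- legitimately rules out any further zeros, in particular at the desingularised nodes $V_k$. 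What your approach buys is a short, fully self-contained analytic proof that never touches the $r_i$; what the paper's route buys is the reusable intermediate data $\div(\dif x)$, which feeds into its other divisor computations. Your cross-check $\div(v_4)=\div(y)+\div(v_3)=2a+b+3d$ is also correct and is a nice confirmation that the two representatives differ by the principal divisor $\div(y)$.
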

\begin{proof}
This may be shown analytically or (as given in the accompanying notebook) entirely using computer algebra. To show this analytically we use $\div(dx) = -2(4c+d) + R_F$.
\end{proof}

We now establish the main result of this subsection.
\begin{prop}\label{prop: HC valid model for Bring}
The HC-model (\ref{eq: P2 model of Bring}) is a model of Bring's curve (\ref{eq: P4 model of Bring}).
\end{prop}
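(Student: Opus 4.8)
The plan is to exhibit an explicit birational map between the HC-model $\mathcal{B}_{HC} \subset \mathbb{P}^2$ and the $\mathbb{P}^4$-model $\mathcal{B} \subset \mathbb{P}^4$, and to verify it is indeed birational. The two models are both (smooth, in the sense of their normalisations) genus-$4$ curves, so to prove they are isomorphic it suffices to produce a rational map one way whose components satisfy the defining relations of the target, and then check it has degree one. The natural candidate map $\mathcal{B}_{HC} \dashrightarrow \mathbb{P}^4$ should be built from the canonical embedding: by Lemma~\ref{lemma: basis of differentials} the differentials $v_1, \dots, v_4$ give the canonical map into $\mathbb{P}^3$, whose image lies on the quadric $\mathcal{Q}$ of (\ref{quaddif}). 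The key observation is that the $\mathbb{P}^4$-model also sits inside a rank-$4$ quadric, namely $H_2 = \sum x_i^2 = 0$, which after the change of coordinates diagonalising it becomes projectively equivalent to $\mathcal{Q}$. So the strategy is to find the linear change of coordinates on $\mathbb{P}^4$ that simultaneously exhibits $H_2 = 0$ in the shape $v_1 v_2 + v_3 v_4 = 0$ (with one further linear coordinate $x_5$, say, eliminated using $H_1 = 0$), thereby matching the canonical $\mathbb{P}^3$ inside the hyperplane $H_1 = 0$.

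Concretely, I would proceed as follows. First, use $H_1 = \sum_{i=1}^5 x_i = 0$ to regard $\mathcal{B}$ as a curve in the hyperplane $\mathbb{P}^3 = \{H_1 = 0\} \subset \mathbb{P}^4$, so that the canonical model of Bring's curve is cut out there by the quadric $H_2 = 0$ and the cubic $H_3 = 0$. Second, diagonalise the restriction of $H_2$ to this hyperplane and choose linear coordinates $w_1, w_2, w_3, w_4$ in which $H_2|_{H_1 = 0}$ becomes $w_1 w_2 + w_3 w_4$ (possible since a nondegenerate quadratic form in four variables over $\mathbb{C}$ is equivalent to the hyperbolic form). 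Third, identify the $w_j$ with the canonical differentials $v_j$ of Lemma~\ref{lemma: basis of differentials}, which is legitimate because both give the canonical embedding and any two canonical embeddings differ by a projective linear transformation; pinning down the precise transformation is done by comparing the cubic $H_3 = 0$ with the image of the HC-model's canonical curve. Finally, write the explicit birational map $\mathcal{B}_{HC} \dashrightarrow \mathcal{B}$ by expressing the $x_i$ as linear combinations of the $v_j(x,y)$ from Lemma~\ref{lemma: basis of differentials}, and verify directly that these satisfy $H_1 = H_2 = H_3 = 0$ as identities on $\mathcal{B}_{HC}$.

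The main obstacle I expect is pinning down the \emph{correct} linear transformation out of the many candidates: matching the quadrics only determines the map up to the orthogonal group of $\mathcal{Q}$, and one must then use the cubic $H_3 = 0$ to cut this ambiguity down to a genuine isomorphism. Verifying that a chosen candidate simultaneously kills $H_3$ is the computational heart of the proof; this is a polynomial-identity check on $\mathcal{B}_{HC}$, where one reduces modulo the ideal generated by $F(X,Y,Z)$ (equivalently, substitutes the local expansions of Lemma~\ref{lemma: local expansion in Bring's curve} or works in the function field $\mathbb{C}(\mathcal{B}_{HC})$). Once $H_1 = H_2 = H_3 = 0$ hold identically, the map lands in $\mathcal{B}$; since both curves are smooth genus-$4$ curves and the map is nonconstant between them, it is automatically finite, and one confirms it is birational (degree one) either by checking injectivity on a generic fibre or, as the authors indicate elsewhere, by an explicit inverse computed via computer algebra in Macaulay2/Sage. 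I would lean on the accompanying notebook for the final degree-one verification rather than grinding the resultants by hand.
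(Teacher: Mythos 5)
Your overall strategy---realise both curves by their canonical models in $\mathbb{P}^3$ and exhibit the element of $PGL_4(\mathbb{C})$ matching them, with the final identities and the degree-one check delegated to computer algebra---is sound and ends at the same place as the paper, but the route to the linear transformation is genuinely different. The paper never touches the quadric until the very end: it starts from the six double points $V_k$ of the HC-model (Lemma~\ref{HCdoublepoints}), uses the Semple--Roth determinantal construction to produce the net of cubics $L_1,\dots,L_4$ through them (which turn out to be, up to signs and a permutation, your $v_j$), observes that the $L_a$ satisfy a single cubic relation defining a cubic surface in $\mathbb{P}^3$, and then pins down the transformation to the Clebsch diagonal surface $\{H_1=H_3=0\}$ by matching the ten Eckardt points of that cubic (computed via Dye's identification with the Brianchon points of the hexagon) against the known Eckardt points of the Clebsch surface at the permutations of $[1:-1:0:0:0]$. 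This reduces the search to a finite combinatorial matching of two ten-point sets, and the quadric $H_2=0$ then appears for free (as the Schur quadric) once $F=0$ is imposed. Your proposal inverts the order: match the unique quadrics first, then use $H_3$ to cut down the residual ambiguity. That residual ambiguity is the full six-dimensional group $\Aut(\mathcal{Q})\cong C_2\ltimes(PGL_2(\mathbb{C})\times PGL_2(\mathbb{C}))$, and the condition that $H_3$ vanish on the image is a system of polynomial equations on its parameters rather than a finite matching problem; this is feasible with Gr\"obner bases but computationally much heavier than the paper's Eckardt-point bookkeeping, and you correctly identify it as the crux. Note also that the cubic containing the canonical curve is unique only modulo multiples of the quadric, so the constraint you impose must be phrased as vanishing of $H_3$ on the image curve, not as equality of two chosen cubic surfaces.

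One phrasing in your second paragraph is circular as written: you justify identifying the $w_j$ with the $v_j$ ``because both give the canonical embedding and any two canonical embeddings differ by a projective linear transformation.'' That principle applies to two canonical embeddings of the \emph{same} abstract curve, and the isomorphism of the two curves is precisely what is being proved. Your argument survives because the existence of the required transformation is ultimately \emph{verified} (by checking $H_1=H_2=H_3=0$ identically on $\mathcal{B}_{HC}$ and confirming degree one) rather than deduced from that principle, but the justification should be restated as ``we search for such a transformation; its existence, once exhibited, establishes the isomorphism.'' You should also note that, as the paper observes, the resulting map is only defined over $\mathbb{Q}[\zeta]$, and that a practical birationality check in Macaulay2 requires specifying the target as the image curve rather than the ambient space.
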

\begin{proof}
We will prove this by explicitly constructing the birational transform.  To do so we make use of the proof of Theorem 3 in \cite{Dye1995}, where the author considers a particular Clebsch hexagon\footnote{For the purposes of this definition a hexagon is a set of 6 points in $\mathbb{P}^2$, no three of which are collinear, called the vertices. A \bam{Brianchon point} of a hexagon is a non-vertex point through which 3 edges (the lines joining two distinct vertices) pass. A \bam{Clebsch hexagon} is a hexagon with 10 Brianchon points \cite{Dye1991}.}, constructs a pencil of plane sextics from this, and finds Bring's curve as the canonical model of a distinguished point in this pencil. By assuming that the HC-model is already the distinguished curve in a pencil, we can construct the birational map. Note this is a fundamentally different approach to the that originally taken with the HC-model, which was derived from considerations of the modular theory of the curve.

Explicitly, Dye introduces $j$ as a solution to $j^2-j-1=0$ and then defines the pencil of curves\footnote{We are using the coordinates $[x,y,z]$ here, distinct from $[X,Y,Z]$,
to highlight that these are not those of the HC-model.} $S_\lambda = S+ \lambda \left \lvert \mathcal{C} \right \rvert^3$ where 
\begin{equation}\label{dyespencil}
\begin{split}
    S(x,y,z) &= (x+jy)^6+(x-jy)^6+(y+jz)^6+(y-jz)^6+(z+jx)^6+(z-jx)^6  , \\
    \mathcal{C}(x,y,z) &= x^2 + y^2 + z^2  .
\end{split}
\end{equation}
Next Dye considers the Clebsch hexagon $H$ with vertices
\[
(1, \pm j, 0)  , \quad (0, 1, \pm j)  , \quad (\pm j, 0, 1) ,
\]
for which the corresponding 10 Brianchon points are at 
\[
(\pm j^2, 1, 0)  , \quad (0, \pm j^2, 1)  , \quad (1, 0, \pm j^2)  , \quad (1, \pm 1, \pm 1).
\]
Dye shows that there is a unique member of the pencil, which he calls $\Gamma$, that contains the vertices. Moreover, $\Gamma$ has the vertices and only the vertices as double points. 

To get a canonical model for $\Gamma$, which Dye show's has genus 4, we now need a little theory from \cite[pp.~122-124]{Semple1949}, namely that a generic cubic surface in $\mathbb{P}^3$ is birational to a system of plane cubics through 6 base points. We apply this taking these six points to be the vertices of the hexagon $H$. Such a generic cubic surface $F \subset \mathbb{P}^3$ can be written as the vanishing of a determinant 
\[
\begin{vmatrix} u_1 & v_1 & w_1 \\ u_2 & v_2 & w_2 \\ u_3 & v_3 & w_3 \end{vmatrix} = 0  , 
\]
where $u_1, \dots, w_3$ are linear homogeneous functions of the $\mathbb{P}^3$ coordinates, that is to say $P=[L_1 : L_2 : L_3 : L_4]\in F\subset \mathbb{P}^3$ if and only if there exists $P^\prime=[X : Y : Z] \in \mathbb{P}^2$ such that 
\begin{align*}
    X u_i(P) +Y v_i(P) + Z w_i(P) = 0  .
\end{align*}
Thinking of $P^\prime$ as a point in a plane $\Pi$ we get a birational transformation $\Pi \leftrightarrow F$, $P^\prime \leftrightarrow P$. The map $\Psi : \Pi \to F$ will have the $L_a$ as homogeneous cubics in $X, Y, Z$. To see this rewrite the determinant equation as (for $i=1,2,3$)
\[
\sum_{a=1}\sp{4} a_{ia}(P^\prime) L_a = 0
\]
for some $a_{ia}$ linear homogeneous in the $X,Y,Z$. On each affine patch $L_a \neq 0$ solving this involves inverting a $3 \times 3$ matrix whose entries are linear homogeneous polynomials in $X,Y,Z$. Likewise, given the $L_a$, we have a 3-parameter family of cubics given by 
\[
a L_1 + b L_2 + c L_3 + d L_4 = 0 \quad \text{for} \quad [a:b:c:d] \in \mathbb{P}^3  .
\]
A cubic in $\mathbb{P}^2$ has 10 projective coefficients, and so a 3-parameter family is defined by 6 constraints. Generically we can take those constraints to come in the form of intersection with 6 generic points $O_i \in \Pi$. 

For our purposes, the six points we  want to intersect with are the vertices of the Clebsch hexagon, which are the double points of the exceptional curve $\Gamma$. Assuming the the HC-model gives such an exceptional curve, we take the points $V_k$ identified in Lemma \ref{HCdoublepoints}. Hence, if we write a generic cubic in $X, Y, Z$ as 
\[
a_0 X^3 + a_1 X^2 Y + a_2 X^2 Z + a_3X Y^2 + a_4 X Y Z + a_5 X Z^2 + a_6 Y^3 + a_7 Y^2 Z + a_8 Y Z^2 + a_9 Z^3  
\]
the equations on the coefficients we get (coming from intersecting with $V_k$ ($k=0,\dots,4$) and $V_5$ respectively) are
\begin{align*}
    a_0 \zeta^{3k} + a_1 \zeta^{4k} + a_2 \zeta^{2k} + a_3 + a_4 \zeta^{3k} + a_5 \zeta^k + a_6 \zeta^{k} + a_7 \zeta^{4k} + a_8 \zeta^{2k} + a_9 &= 0, \\
    a_0   &= 0.  
\end{align*}
Setting the coefficients of $\zeta^{nk}$ to be zero gives us the 3-parameter family of cubics 
\begin{align*}
 0 &= a X^2 Y + b X^2 Z + cX Y^2  + d X Z^2 -d Y^3 -a Y^2 Z -b Y Z^2 -c Z^3  , \\
 &:= aL_1 + b L_2 + cL_3 + dL_4  .
\end{align*}

Comparing the coefficients we see that our map into $\mathbb{P}^3$ is (essentially) the canonical embedding. as 
\[
[v_1 : v_2 : v_3 : v_4 ] = [-L_4 : L_3 : -L_2 : L_1]  . 
\]
One can check, using for example Gr\"obner bases, that the $L_a$ satisfy the equation
\[
L_2 L_4^2 - L_1^2 L_4 - L_1 L_3^2 + L_2^2 L_3= 0. 
\]
This is the cubic we call $F$. One can use the package \texttt{Cremona} \cite{Stagliano2018} in Macaulay2 to check that the map $\Psi$ is birational. Note one needs to make sure that the range is chosen such that the map is explicitly surjective, not just use the implicit knowledge that the map is surjective onto its image. Doing so and asking for the inverse map gives 
\begin{align*}
    [X:Y:Z] = [L_2^2 - L_1 L_3 : L_1 L_4 : L_2 L_4] . 
\end{align*}
For example, we can see 
\begin{align*}
\frac{L_2^2 - L_1 L_3}{L_1 L_4} &= \frac{\pround{X^2 Z - Y Z^2}^2 - \pround{X^2 Y - Y^2 Z}\pround{X Y^2 - Z^3}}{\pround{X^2 Y - Y^2 Z} \pround{X Z^2 - Y^3}}  ,  \\
&= \frac{X^4 Z^2 -X^2 Y Z^3 -X^3 Y^3 + X Y^4 Z}{X^3 Y Z^2 - X Y^2 Z^3 - X^2 Y^4 + Y^5 Z} = \frac{X}{Y}  ,
\end{align*}
and
\[
\frac{L_1L_4}{L_2 L_4} = \frac{L_1}{L_2} = \frac{X^2 Y - Y^2 Z}{X^2 Z - Y Z^2 } = \frac{(X^2 Z - Y Z^2)(Y/Z)}{X^2 Z - Y Z^2 } = \frac{Y}{ Z}  . 
\]
We now have a cubic surface in $\mathbb{P}^3$ corresponding to the system of curves intersecting the $V_i$. From \cite[pp.~198-201]{Hirschfeld1986} we know there exists a coordinate system in which this cubic can be written as the subset of $\mathbb{P}^4$ 
\[
H_3 = 0 = H_1  
\]
(this is called `the' Clebsch surface). To look for such a coordinate system, we write this surface in $\mathbb{P}^3$ as 
\begin{align*}
0 &= x_1^{2} x_2 + x_1 x_2^{2} + x_1^{2} x_3 +  x_2^{2} x_3 + x_1 x_3^{2} + x_2 x_3^{2} + x_1^{2} x_4 +  x_2^{2} x_4  +  x_3^{2} x_4 + x_1 x_4^{2} + x_2 x_4^{2} + x_3 x_4^{2} \\
&\phantom{=} + 2 x_1 x_2 x_3 + 2 x_1 x_2 x_4 + 2 x_1 x_3 x_4 + 2 x_2 x_3 x_4   .
\end{align*}
From here, one can use the fact that the 10 Eckardt points\footnote{An \bam{Eckardt point} of a surface is a point where three lines contained within the surface intersect \cite{Hirschfeld1986}.} of the cubic in $L_a$ form the 10 vertices of a pentahedron \cite{Hirschfeld1986}, and that in the coordinate system of the Clebsch surface in $\mathbb{P}^4$ these lie at the permutations of $[1:-1:0:0:0]$ \cite{Edge1978}. This gives us a possible way a spotting the transform if we can calculate the Eckardt points of $F$. To do this, we use the identification from \cite{Dye1995}, that the 3 lines in $F$ intersecting to give an Eckardt point come from the 3 edges of $H$ intersecting at a Brianchon point. To this end we find the images of the $V_i V_j$ for which we give a generating set of the ideal corresponding to the line, for example 
\begin{enumerate}
    \item $\Psi(V_0 V_5) : \pangle{L_4 - L_3, L_2 - L_1}$, 
    \item $\Psi(V_1 V_2) : \pangle{L_4 + (\zeta^2 + \zeta+1 )L_2 + (\zeta^4 - \zeta^2 - \zeta)L_1,  L_3 + (\zeta^3+ 2 \zeta^2 + \zeta)L_2 + (\zeta^3 + \zeta^2 + \zeta) L_1}$,
    \item $\Psi(V_3 V_4) : \pangle{L_4 + (-\zeta^2 - \zeta)L_2 + (\zeta^3 + 2\zeta^2 + \zeta)L_1,  L_3 + (\zeta^3 - \zeta - 1)L_2 + (\zeta^2 + \zeta + 1)L_1}$.
\end{enumerate}
Then
\[
\Psi(V_0 V_5) \cap \Psi(V_1 V_2) \cap \Psi(V_3 V_4) = [-\zeta^3 - \zeta^2 - 1 : -\zeta^3 - \zeta^2 - 1 : 1 : 1] = \Psi(V_0 V_5 \cap V_1 V_2 \cap V_3 V_4)  . 
\]
One can do likewise to find the other Eckardt points. 

Armed now with the knowledge of the Eckardt points, we can find appropriate projective transforms $A$ that biject the sets of Eckardt points by acting $(x_1, x_2, x_3, x_4)^T = A(L_1, L_2, L_3, L_4)^T$, for example 
\[
A = \begin{pmatrix}
\zeta^3 & -1 & -\zeta^{2} & \zeta \\ 1 & -\zeta^3 & -\zeta & \zeta^{2} \\ \zeta^{2} & -\zeta & -1 & \zeta^3 \\ \zeta & -\zeta^2 & -\zeta^3 & 1 \end{pmatrix} . 
\]
We can then use again Macaulay2 to check that this transform then gives us the correct cubic in $\mathbb{P}^3$. Moreover, if we now restrict to the HC-model in the system of curves that intersect the $V_i$, that is we impose the condition that $F(X, Y, Z)=0$, we get the final degree-2 polynomial on the $x_i$ ($H_2=0$). This quadric is in fact the Schur quadric corresponding to a distinguished double-six\footnote{A \textbf{double six} is a collection of 12 lines $a_1, \dots, a_6, b_1, \dots, b_6$ in $\mathbb{P}^3$, arranged as 
\[
\begin{array}{cccccc}
    a_1 & a_2 & a_3 & a_4 & a_5 & a_6  \\
    b_1 & b_2 & b_3 & b_4 & b_5 & b_6 
\end{array}
\]
so that each line is disjoint from those in the same row and column, but intersects the other 5 lines.} of lines that necessarily exist on a cubic surface constructed as above \cite{Dye1995}. An equivalent rational map is given in \cite[p. 557]{Dolgachev2012}, but the inverse is not provided.

To see this working, let's consider $c$ and $d$, and see that these are desingualrised on the smooth canonical embedding. Taking $[X:Y:Z] = [1:t:t^4]$ we get 
\[
[L_1: L_2: L_3: L_4] = [1: t^3 :t+ t^6 : -t^2]  ,
\]
and so taking the limit we get $[1:0:0]_2 \mapsto [1:0:0:0]$ in $L$ coordinates. Acting with $A$, we get the point in $\mathbb{P}^4$ given by 
\[
[x_1 : x_2 : x_3 : x_4 : x_5] = [\zeta^3 : 1 : \zeta^2 : \zeta : \zeta^4]  . 
\]
Repeating the process taking $[X:Y:Z] = [1:t^4:t]$ gives $[1:0:0]_1 \mapsto [0:1:0:0]$ in $L$ coordinates, which is equivalently 
\[
[x_1 : x_2 : x_3 : x_4 : x_5] = [1 : \zeta^3 : \zeta : \zeta^2 : \zeta^4]  . 
\]
This makes sense, as the change $Y \leftrightarrow Z$ corresponds to $L_1 \leftrightarrow L_2$, $L_3 \leftrightarrow L_4$. 

Moreover, to clarify our last point on the imposition of $H_2=0$, we consider the point $[X:Y:Z] = [0:1:1]$ which does not lie on the HC-model of the curve. Under our birational map this corresponds to the point 
\begin{align*}
    [L_1 : L_2 : L_3 : L_4] &= [1:  1 :1 :1]  
    \Rightarrow [x_1 : x_2 : x_3 : x_4 : x_5] &=  [2-\sqrt{5} : -2+\sqrt{5} : -1 : 1 : 0]  .
\end{align*}
This point does lie on the Clebsch diagonal surface given by $H_1 = 0 = H_3$, but does not satisfy $H_2=0$. 
\end{proof}

Note that in the above proof of the equivalence of the Riemann surfaces, the birational map we constructed was defined over $\mathbb{Q}[\zeta]$. As such, to equate the two algebraic curves we need to be working over a field containing $\mathbb{Q}[\zeta]$. We will later see when looking at quotients of Bring's curve that it is insufficient to work over $\mathbb{Q}$. Indeed, note that over $\mathbb{Q}$ there are no solutions to the equations defining Bring's curve in the $\mathbb{P}^4$-model.  
\subsection{The Period Matrix}\label{subsec: period matrix}
We record now a result from the literature.
\begin{theorem}[\cite{Riera1992}, \cite{Gonzalez2000}]
Define the matrices $M$, $M_S$ by
\[
M =  \begin{pmatrix}
4 & 1 & -1 & 1 \\ 1 & 4 & 1 & -1 \\ -1 & 1 & 4 & 1 \\ 1 & -1 & 1 & 4
\end{pmatrix}, \quad
M_S =  \begin{pmatrix}
4 & -1 & -1 & -1 \\ -1 & 4 & -1 & -1 \\ -1 & -1 & 4 & -1 \\ -1 & -1 & -1 & 4
\end{pmatrix}.
\]
A Riemann matrix for Bring's curve is given by
$\tau_{\mathcal{B}} = \tau_0 M$, where $\tau_0 = -0.5+0.186676i \, (6.d.p)$ is given by the conditions 
\[
j(\tau_0) = - \frac{29^3 \times 5}{2^5}, \quad j(5\tau_0) = -\frac{25}{2} .
\]
Further, there exists a symplectic transformation such that $\tau_{\mathcal{B}} = \tau_0 M_S$.
\end{theorem}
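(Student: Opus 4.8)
The plan is to separate the statement into three parts: the \emph{shape} of the Riemann matrix, which is forced by the $S_5$-symmetry; the exact identification of the scalar $\tau_0$, which requires the modular description of $\mathcal{B}$; and the symplectic equivalence of the two normal forms $\tau_0 M$ and $\tau_0 M_S$. First I would pin down the $S_5$-action on the relevant linear data. Acting on the differentials $v_1,\dots,v_4$ of Lemma~\ref{lemma: basis of differentials} (equivalently on the forms $x_i$ of the $\mathbb{P}^4$-model modulo $H_1=\sum x_i$), the space $H^0(\mathcal{B},\Omega^1)$ carries one of the two $4$-dimensional, absolutely irreducible, rational representations $W$ of $S_5$. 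Since $W$ is self-dual and realisable over $\mathbb{Q}$, and $H^{0,1}=\overline{H^{1,0}}$, the rational Hodge structure $H^1(\mathcal{B},\mathbb{Q})$ of dimension $8$ is isomorphic to $W^{\oplus 2}$, with commutant $M_2(\mathbb{Q})$; thus $\Jac(\mathcal{B})$ is isotypic. Requiring the normalized period matrix $\tau = A^{-1}B$ — writing the $4\times 8$ period matrix as $(A\mid B)$ in a symplectic basis — to intertwine the action on cohomology with the action on $H_1(\mathcal{B},\mathbb{Z})\cong\mathbb{Z}^8$, compatibly with the intersection form, forces $\tau$ to be a scalar multiple $\tau_0 M$ of a fixed symmetric integer matrix $M$. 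I would fix the precise entries of $M$ by computing an explicit symplectic basis of $H_1(\mathcal{B},\mathbb{Z})$ and the periods of the $v_i$ with the SageMath Riemann surfaces module, then applying an element of $Sp(8,\mathbb{Z})$ to carry the numerically obtained $\tau$ to this form.

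Because $M$ has integer entries, $M\mathbb{Z}^4$ has finite index in $\mathbb{Z}^4$, so $\tau = \tau_0 M$ exhibits $\Jac(\mathcal{B})$ as isogenous to $E_{\tau_0}^{4}$, where $E_{\tau_0}=\mathbb{C}/(\mathbb{Z}+\tau_0\mathbb{Z})$. The central step is then to identify the modulus $\tau_0$ exactly, and here the $S_5$-symmetry alone is insufficient: I would invoke the modular description $\mathcal{B}=\overline{\mathcal{H}/\Gamma_0(2,5)}$. The inclusion $\Gamma(5)\subset\Gamma_0(2,5)$ equips the elliptic quotient $E_{\tau_0}$ with a cyclic $5$-isogeny onto $E_{5\tau_0}$, so the pair $(j(\tau_0),j(5\tau_0))$ lies on the modular curve $X_0(5)$, i.e.\ satisfies the classical modular polynomial $\Phi_5$. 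Matching the elliptic quotient with its explicit modular model (the factor related to $X_0(50)$) produces $j(\tau_0)=-29^3\times 5/2^5$ and $j(5\tau_0)=-25/2$; these two conditions, consistent with $\Phi_5$, determine $\tau_0$ uniquely in $\mathcal{H}/SL_2(\mathbb{Z})$, and choosing the representative with $\mathrm{Re}\,\tau_0=-\tfrac12$ recovers the stated numerical value.

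For the final clause I would give the symplectic transformation explicitly, working inside the block-diagonal subgroup of $Sp(8,\mathbb{Z})$: an element $\begin{pmatrix} U & 0 \\ 0 & (U^{T})^{-1}\end{pmatrix}$ with $U\in GL_4(\mathbb{Z})$ acts on the Siegel upper half space by $\tau\mapsto U\tau U^{T}$, so it suffices to exhibit $U\in GL_4(\mathbb{Z})$ with $UMU^{T}=M_S$. Observing that $M$ is circulant with first row $(4,1,-1,1)$ and that $M_S = 5I_4 - J_4$ (with $J_4$ the all-ones matrix), both have eigenvalues $\{1,5,5,5\}$ and hence determinant $125$; I would then verify that these two positive-definite symmetric forms lie in the same $\mathbb{Z}$-congruence class by displaying such a $U$, which for these small explicit matrices is a finite check.

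I expect the exact identification of $\tau_0$ to be the main obstacle. The representation theory cleanly forces the shape $\tau_0 M$ and the isogeny $\Jac(\mathcal{B})\sim E_{\tau_0}^4$, and the unimodular congruence $M\sim M_S$ is elementary, but promoting the isogeny to the precise transcendental modulus — equivalently, matching the elliptic quotient of $\mathcal{B}$ with the correct modular curve and reading off both $j$-invariants — is the genuine input, and it is precisely the step that the $S_5$-symmetry cannot supply on its own.
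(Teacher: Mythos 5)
Your proposal takes a genuinely different route from the paper. The paper's own proof of this theorem is essentially citation plus verification: the statement is attributed to R\&R \cite{Riera1992} (with the correction, first noted in \cite{Braden2012}, that R\&R's two $j$-invariant equations were swapped), and it is then independently cross-checked twice --- once by canonicalising Weber's explicit period matrix via the matrix $C$ with $C^T I_W C = J_g$ and numerically exhibiting the symplectic matrix $R$ of (\ref{transRRW}) carrying $\tau_W$ to $\tau_{\mathcal{B}}$, and once by transforming the Riemann matrix of the HC-model computed numerically in SageMath. You instead sketch a direct derivation in the spirit of the original sources \cite{Riera1992, Gonzalez2000}: representation theory forces the one-parameter form $\tau_0 M$, the quotient/modular structure pins down $\tau_0$, and an integral congruence gives the $M_S$ form. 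Your first and third steps are sound, and the third is even easier than you anticipate: since $Q_M(x)=5\sum_i x_i^2-(x_1-x_2+x_3-x_4)^2$ while $Q_{M_S}(x)=5\sum_i x_i^2-(x_1+x_2+x_3+x_4)^2$, the matrix $U=\operatorname{diag}(1,-1,1,-1)$ already satisfies $UMU^T=M_S$, so the required block-diagonal symplectic transformation exists by inspection.

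The genuine gap is in your central step, the exact identification of $\tau_0$. That the pair $(j(\tau_0),j(5\tau_0))$ satisfies the modular polynomial $\Phi_5$ is tautologically true for every $\tau_0\in\mathcal{H}$ and carries no information; the content of the theorem is the two specific values \emph{and their assignment}, which your proposal obtains only by asserting that ``matching the elliptic quotient with its explicit modular model'' produces them. To derive them one must (i) compute the elliptic quotients explicitly --- e.g.\ $\mathcal{E}_1=\mathcal{B}/\langle(12)(34),(13)(24)\rangle$ and $\mathcal{E}_2=\mathcal{B}/\langle(1324)\rangle$ as in \S\ref{sec: quotients by subgroups}, whose Weierstrass models have $j$-invariants $-5\times 29^3/2^5$ and $-25/2$ --- and, crucially, (ii) prove which quotient has period lattice $\langle 1,\tau_0\rangle$ and which has $\langle 1,5\tau_0\rangle$, by tracking the induced maps on period lattices under the quotient maps. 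Step (ii) cannot be waved through: an argument that does not control the assignment determines at best the unordered pair $\{j(\tau_0),j(5\tau_0)\}$, which does not determine $\tau_{\mathcal{B}}$, since swapping replaces the Jacobian's complex-torus decomposition $E_{5\tau_0}^3\times E_{\tau_0}$ by the non-isomorphic $E_{\tau_0}^3\times E_{5\tau_0}$. This matching is precisely where R\&R erred, so it is the one step a correct proof must nail down, and it is the one your plan leaves as a black box. (A small slip in the same paragraph: $\Gamma_0(2,5)=\Gamma_0(2)\cap\Gamma(5)\subset\Gamma(5)$, not the reverse inclusion.)
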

\begin{proof}
This was first shown in \cite{Riera1992}, but the equations for $j(\tau_0)$, $j(5\tau_0)$ were incorrectly swapped as first noted\footnote{Note the approximation of $\tau_0$ in \cite{Braden2012} contains a typographic error.} in \cite{Braden2012}, where the above period matrix was also calculated via the HC-model. 

Moreover, in \cite{Weber2005}, a period matrix is constructed with respect to a homology basis with intersection matrix 
\[
I_W = \begin{pmatrix}
0 & 1 & 1 & -1 & -1 & 0 & 0 & 1 \\ -1 & 0 & 1 & 1 & 0 & 1 & 0 & -1 \\ -1 & -1 & 0 & 1 & 0 & -1 & 0 & 0 \\ 1 & -1 & -1 & 0 & 0 & 0 & 1 & 0 \\ 1 & 0 & 0 & 0 & 0 & 1 & 1 & -1 \\ 0 & -1 & 1 & 0 & -1 & 0 & 1 & 1 \\0 & 0 & 0 & -1 & -1 & -1 & 0 & 1 \\ -1 & 1 & 0 & 0 & 1 & -1 & -1 & 0 
\end{pmatrix}  , 
\]
that is given as $\Omega = (A, B)$ where the columns $A_k, B_k$ are 
\[
A_k = \begin{pmatrix} \zeta^{k} (1-\zeta^2) \\ \zeta^{2k+3/2}(1-\zeta^4)(l\Phi-1) \\ \zeta^{4k+3}(1-\zeta^3)\Phi(1-l) \\ \zeta^{3k+2}(1-\zeta)l \end{pmatrix}, \quad B_k = \begin{pmatrix}  \zeta^{2k+3/2}(1-\zeta^4)(l\Phi-1) \\ \zeta^{4k+3}(1-\zeta^3)\Phi(1-l) \\ \zeta^{3k+2}(1-\zeta)l \\ \zeta^{k} (1-\zeta^2) \end{pmatrix}  ,
\]
for $k=0, \dots, 3$, $\Phi = \frac{1}{2}(1+\sqrt{5})$, and $l = \left \lvert {\frac{I(-1,0)}{I(-\infty, -1)}} \right \rvert \approx 0.848641$ where 
\[
I(a,b) = \int_a^b (t-1)^{-1/5} t^{-3/5} (t+1)^{-4/5}  dt  . 
\]
Note \cite[Lemma 5.1]{Weber2005} uses $\zeta= \exp(2 \pi i/10)$, whereas we take $\zeta= \exp(2 \pi i/5)$. As such, the columns of $\Omega$ look different to those of Weber in terms of the exponent of $\zeta$, but they do indeed agree.

One can find (using Sage) the matrix 
\[
C = \begin{pmatrix}
0 & 1 & -2 & 1 & 1 & 1 & 0 & -1 \\ 0 & 1 & -1 & 1 & 0 & 0 & -1 & 0 \\ 0 & 0 & -1 & 0 & 0 & 1 & 1 & 0 \\ 1 & 1 & -2 & 2 & 0 & 1 & -1 & -1 \\ 0 & 0 & 0 & 0 & 0 & 0 & 1 & 1 \\ 0 & 0 & 1 & 0 & 0 & 0 & 0 & 0 \\ 0 & 0 & 0 & 0 & 0 & 0 & 0 & 1 \\ 0 & 0 & 0 & 1 & 0 & 0 & 0 & 0
\end{pmatrix}
\]
such that $C^T I_W C = J_g = \begin{pmatrix} 0 & \id_g \\ -\id_g & 0 \end{pmatrix}$, i.e. $C$ `canonicalises' the homology basis. This means we get a Riemann matrix $\tau_W = (AC)^{-1} (BC) = C^{-1}A^{-1} B C$, and one can numerically find that the matrix 
\[
R = \begin{pmatrix} \delta & \beta \\ \gamma & \alpha \end{pmatrix} = \begin{pmatrix}
0 & 0 & 0 & 1 & 0 & 0 & 1 & 2 \\ 0 & -1 & 1 & 0 & -1 & -2 & 1 & 1 \\ 1 & 0 & 1 & 0 & 2 & 1 & 1 & 1 \\ 0 & 1 & 0 & 0 & 1 & 2 & 0 & 0 \\ 1 & 0 & 0 & 0 & 2 & 1 & 0 & 1 \\ 1 & 0 & 1 & -1 & 1 & 1 & 1 & -1 \\ -1 & 0 & 0 & 1 & -1 & -1 & 1 & 2 \\ 1 & 0 & 1 & -1 & 1 & 2 & 1 & -1
\end{pmatrix}
\]
relates $\tau_W$ to $\tau_{\mathcal{B}}$ by 
\begin{equation}\label{transRRW}
   \tau_{\mathcal{B}} = (\delta + \tau_W \gamma)^{-1}(\beta+ \tau_W \alpha) .
\end{equation}
Here $R$ is a symplectic transform with respect the standard symplectic form $J_g$. 

For a fourth proof, we may consider the Riemann matrix numerically calculated by SageMath, and find a transform to $\tau_{\mathcal{B}}$, as is done in the corresponding notebook. 
\end{proof}

Riera and Rodriguez (hereafter abbreviated to R\&R) constructed the constraints on $\tau_0$ via $j$-invariants by considering the quotients by group actions of the curve to elliptic curves \cite{Riera1992}. They show that these constraints give a unique value of $\tau_0$ modulo $\Gamma_0(5)$, or equivalently in the language of \cite{Gonzalez2000} that $\tau_0$ gives a distinguished point in the modular curve $X_0(5)$. As we will see later in \S\ref{subsec: quotient by 3-cycle}, there are additional quotients to elliptic curves not considered by  R\&R, which have $j$-invariants 
\begin{align}
    j(15 \tau_0) &= -\frac{5^2 \times 241^3}{2^3}  , \\
    j(3 \tau_0) &= \frac{5 \times 211^3}{2^{15}}   .
\end{align}
The latter is identified in relation to Bring's curve in \cite[Exercise 8.3.2c]{Serre2008}. Serre says that this curve (50H) and 50E (using the naming convention of \cite{Birch1975}) with $j(5\tau_0)$ are 15-isogenous over $\mathbb{Q}$. This isogeny is not too mysterious when we think on the level of the corresponding elliptic curves over $\mathbb{C}$ as $\mathbb{C}/\pangle{1,3 \tau_0}$ and $\mathbb{C}/\pangle{1,5 \tau_0}$, wherein the isogeny $\mathbb{C}/\pangle{1,3 \tau_0} \to \mathbb{C}/\pangle{1,5 \tau_0}$ is the composition of the quotients by the maps $z \mapsto z+\tau_0$ and $z \mapsto z+ 1/5$ respectively. There is a complete $\mathbb{Q}$-isogeny class of elliptic curves of order four with periods $\tau_0, 3 \tau_0, 5 \tau_0$ and $15 \tau_0$ \cite{Birch1975}. 

It follows from the above that $\tau_0$ is transcendental. By a theorem of Schneider \cite{Baker2008}, if $j(z)$ is rational, $z$ is either transcendental or an element of a quadratic imaginary field with class number 1. However, the $j$-invariants of elements of these fields are known to be integers, which doesn't occur in the case of $\tau_0$. In Weber's form of the period matrix, the transcendentality comes about because of the constant $l$, which is a ratio of Schwarz-Christoffel integrals which arise from the map of a Euclidean quadrilateral to a hyperbolic quadrilateral. 
\subsection{The Automorphism Group}\label{subsec: automorphism group}

As previously noted, the $\mathbb{P}^4$-model of Bring's curve shows that the automorphism group $\Aut(\mathcal{B})$ contains $S_5$. Hurwitz's theorem on the automorphism group $G=\Aut(\mathcal{C})$ of a curve of genus $g\ge2$ yields that
$|G|\in\{84(g-1), 48(g-1), 40(g-1),\ldots \}$ and that if $p$ is a prime where $p\big| |G|$, then $p\in\{2,\ldots,g, g+1,2g+1\}$.
For genus $4$ this means $p\in\{2,3,5\}$ and so the maximal automorphism group of a genus $4$ curve has order strictly less than $84(g-1)$. In 1895 Wiman showed\footnote{An English translation of Wiman's paper has been produced \cite{DisneyHogg2022}.}

\begin{prop}[\cite{Wiman1895}]
$\Aut(\mathcal{B}) = S_5$. This is the maximal possible automorphism group for a genus-4 surface, and Bring's curve is the only curve to achieve it.
\end{prop}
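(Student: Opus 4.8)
The plan is to reduce everything to the already-established containment $S_5 \le \Aut(\mathcal{B})$ together with a sharp upper bound on $|\Aut(\mathcal{C})|$ for an arbitrary genus-$4$ curve $\mathcal{C}$. Since $\Aut(\mathcal{C})$ is finite (genus $\ge 2$), I would let $G = \Aut(\mathcal{C})$ act on $\mathcal{C}$, pass to the quotient orbifold $\mathcal{C}/G$ of genus $g_0$ with branching data $(m_1,\dots,m_r)$, and apply Riemann--Hurwitz in the form $6 = |G|\,R$, where $R = 2g_0 - 2 + \sum_{i=1}^r\left(1 - 1/m_i\right)$. Maximising $|G|$ is then equivalent to minimising the positive rational $R$ subject to $6/R \in \mathbb{Z}$ and to the existence of a group action realising the signature.

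First I would observe that $|G| > 120$ forces $R < 1/20$, and that such a small $R$ can only come from a genus-$0$ base with exactly three branch points: any $g_0 \ge 1$ or any fourth branch point already gives $R \ge 1/6$. Restricting to triangle signatures $(0;p,q,r)$ I then need $1/p + 1/q + 1/r > 19/20$, and a short case analysis shows the only hyperbolic triples satisfying this are $(2,3,7)$, giving $|G| = 252$, and $(2,3,8)$, giving $|G| = 144$, while the boundary case $(2,4,5)$ realises exactly $|G| = 120$. Thus $120$ is attained, and the only groups that could exceed it sit in these two signatures.

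Next I would eliminate both candidates by group theory. For $(2,3,7)$ the abelianisation of the triangle group is trivial, so any realising group is perfect; since there is no perfect group of order $252 = 2^2\cdot 3^2\cdot 7$, this case is vacuous. For $(2,3,8)$, abelianising the relations $x^2 = y^3 = z^8 = xyz = 1$ forces $|G^{\mathrm{ab}}| \le 2$, after which a finite check over the groups of order $144$ shows that none carries a generating triple with elements of these exact orders. Hence $|\Aut(\mathcal{C})| \le 120$ for every genus-$4$ curve, and combined with $S_5 \le \Aut(\mathcal{B})$ and $|S_5| = 120$ this yields $\Aut(\mathcal{B}) = S_5$ together with the maximality claim.

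For uniqueness I would exploit the rigidity of the realising signature $(0;2,4,5)$: a sphere with three marked points has no moduli, so a curve carrying an order-$120$ action of this signature is determined by the pair (group, surface-kernel epimorphism) up to equivalence. I would first pin down the group: among the groups of order $120$ only $S_5$ admits a $(2,4,5)$-generating triple, since $|G^{\mathrm{ab}}| \le 2$ is forced as above, $SL(2,5)$ fails because its unique involution is central and makes any order-$4$ element square to it, $A_5 \times \mathbb{Z}/2$ has no element of order $4$, and a finite check disposes of the remaining groups with $|G^{\mathrm{ab}}| \le 2$. I would then verify that all $(2,4,5)$-generating triples of $S_5$ lie in a single class under $\Aut(S_5)$ and braiding of the three branch points, so exactly one curve arises, namely $\mathcal{B}$. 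The main obstacle is precisely this group-theoretic core --- ruling out the order-$144$, $(2,3,8)$ triples and establishing that the admissible generating triples form a single equivalence class (Wiman's delicate hand computation) --- which is where the argument really lives; the Riemann--Hurwitz enumeration and the rigidity of the three-point base are comparatively routine.
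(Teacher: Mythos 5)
Your proof is correct, but it follows a genuinely different route from the one the paper (following Wiman) takes. The paper's argument is enumerative and works with explicit equations: the canonical model of a non-hyperelliptic genus-$4$ curve is the intersection in $\mathbb{P}^3$ of a cubic with a \emph{unique} quadric $Q$, so $\Aut(\mathcal{C})$ acts on $Q$; splitting into the cases $Q\cong\mathbb{P}^1\times\mathbb{P}^1$ (curve of bidegree $(3,3)$) and $Q$ a cone (plane sextic), Wiman writes down the possible equations, imposes symmetries of orders $2$, $3$ and $5$, and finds a maximum of $72$ without an order-$5$ element and $120$, attained only by Bring's curve, with one. The paper's preliminary Hurwitz-type bound also kills $84(g-1)=252$ differently from you, via the condition that any prime dividing $|\Aut(\mathcal{C})|$ lies in $\{2,3,5\}$ when $g=4$ (so $7\nmid|G|$), whereas you use perfectness of Hurwitz groups; both work. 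Your signature and surface-kernel-epimorphism argument is essentially that of the alternative proofs the paper cites (\cite{Kuribayashi1990,Magaard2002}): it never needs an equation, and your uniqueness step is particularly clean --- rigidity of the thrice-punctured sphere reduces everything to showing that the $(2,4,5)$-generating triples of $S_5$ form a single class, which one can make fully explicit (a Frobenius character count gives exactly $120$ triples of type transposition, $4$-cycle, $5$-cycle with product the identity; each generates, since the only subgroups of $S_5$ containing elements of orders $4$ and $5$ are $F_{20}$, which has no transpositions, and $S_5$ itself; and $S_5$ acts freely on these triples by conjugation, so there is one orbit). The price is the two finite checks you defer --- that no group of order $144$ carries a $(2,3,8)$ generating triple, and that the remaining order-$120$ groups with abelianisation of order at most $2$ are disposed of --- which are true but asserted rather than executed; some such enumeration is unavoidable in any proof, and Wiman's version of it buys, at the cost of heavier computation, explicit defining equations for every genus-$4$ curve with a given automorphism group.
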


Wiman's proof was enumerative and produced equations for the curves with a given automorphism group. He had previously dealt with the hyperelliptic curves and their automorphism groups and for non-hyperelliptic curves $\mathcal{C}$ of genus 4 he began with the canonical embedding of the curve in $\mathbb{P}^3$ which meant that their automorphisms could be taken to be collineations, and so forming a subgroup of $PGL_4(\mathbb{C})$. Wiman's approach then used the fact (see \cite[IV.5.2.2]{Hartshorne1977}) that the canonical embedding of a non-hyperelliptic curve of genus 4 is the complete intersection of an irreducible cubic surface and a unique quadric surface (which is either irreducible, or a cone). The uniqueness of the quadric $Q$ means that automorphisms of the curve become automorphisms of $Q$. In the case where $Q$ is of full rank we have that it is isomorphic to $\mathbb{P}^1 \times \mathbb{P}^1$, hence we know $\Aut(\mathcal{C})$ is isomorphic to a finite subgroup of $\Aut(\mathbb{P}^1 \times \mathbb{P}^1) = C_2 \ltimes (PGL_2(\mathbb{C}) \times PGL_2(\mathbb{C}))$ and that using the coordinates of $\mathbb{P}^1 \times \mathbb{P}^1$ Wiman could express the curve $\mathcal{C}$ as an equation of bidegree $(3,3)$ in terms of these. In the singular case $Q$ is a quadric cone and by projecting from a point of the cone meant that $\mathcal{C}$ could be expressed as a plane sextic. In both cases Wiman could write equations for possible genus $4$ curves and his strategy was to look at the restrictions on these imposed by symmetries of orders $2$, $3$ and $5$. For curves without an order $5$ element the maximal order of symmetry group was $72$ for $Q$ either a smooth quadric or cone. Including an order $5$ element yielded a maximal symmetry group of order $120$ only in the case of the smooth quadric with the resulting curve being Bring's. In the case of Bring's curve the quadric $Q$ is the quadric $\mathcal{Q}$ we determined earlier. Different proofs of Wiman's result may be found in \cite{Kuribayashi1990,Magaard2002}.

It is clear from the $\mathbb{P}^4$-model that $\Aut(\mathcal{B})$ may be realised as projective transforms via the permutation representation of $S_5$ acting on the subspace $\sum_i x_i = 0$; it is also clear that it may be  be realised as a subgroup of $PGL_4(\mathbb{C})$ via the induced action on the differentials or equivalently the $L_a$'s. What is non-trivial is the following fact.
\begin{theorem}[\cite{Dye1995}]\label{autA5}
The $A_5$ subgroup of $\Aut(\mathcal{B})$ can be realised as a group of collineations in the HC-model, that is, can be realised as a subgroup of $PGL_3(\mathbb{C})$.
\end{theorem}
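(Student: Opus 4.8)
The plan is to transport the manifest $A_5$-action on the $\mathbb{P}^4$-model across the birational equivalence of Proposition~\ref{prop: HC valid model for Bring} and to verify that the even permutations descend to \emph{linear} self-maps of the HC-plane. Concretely, $A_5 \subset S_5 = \Aut(\mathcal{B})$ acts on the $\mathbb{P}^4$-model by even permutations of the coordinates $x_i$, and the proof of Proposition~\ref{prop: HC valid model for Bring} already records the cubics $L_a$ giving the rational map $\mathbb{P}^2 \to \mathbb{P}^3$ together with its inverse $[X:Y:Z] = [L_2^2 - L_1 L_3 : L_1 L_4 : L_2 L_4]$. Writing $\pi$ for the resulting birational map from the canonical model to the HC-plane, each $g \in A_5$ produces an explicit birational self-map $\bar g = \pi \circ g \circ \pi^{-1}$ of $\mathbb{P}^2$ that preserves the HC-model, and the whole content of the theorem is that every $\bar g$ is in fact a collineation, i.e. is represented by linear forms.

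First I would isolate the two evident collineations of (\ref{eq: P2 model of Bring}): the involution $Y \leftrightarrow Z$, immediate from the symmetry of $F$, and the order-$5$ diagonal $[X:Y:Z] \mapsto [X:\zeta Y:\zeta^{-1}Z]$. Tracing these through the $L_a$ — or, more directly, following how they permute the two branches $c,d$ at the node $[1:0:0]$ of Lemma~\ref{lemma: local expansion in Bring's curve} via their images in the $\mathbb{P}^4$-model — identifies them with the double transposition $(1\,2)(3\,4)$ and a $5$-cycle, both even. Since $Y\leftrightarrow Z$ inverts the diagonal, these two generate only the dihedral group $D_5$ fixing the node $V_5=[1:0:0]$ of Lemma~\ref{HCdoublepoints}; the genuinely new content is to realise the order-$3$ elements, which move $V_5$ among the six nodes and are invisible as symmetries of $F$. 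For these I would compute $\bar g$ for a single $3$-cycle directly from the birational map and certify, by Gr\"obner-basis elimination in Macaulay2, that the a priori quadratic plane Cremona map collapses to a linear one once its common factor is cleared; writing out the resulting matrices and computing the order of the group they generate together with the two evident elements then confirms a copy of $A_5 \subset PGL_3(\mathbb{C})$.

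The conceptual reason that the realisable subgroup is exactly $A_5$, which is also where I expect the main difficulty, lies in the geometry set up in Proposition~\ref{prop: HC valid model for Bring}: the $\mathbb{P}^4$-model of $\mathcal{B}$ lies on the Clebsch diagonal cubic surface $H_1=H_3=0$, whose automorphism group is $S_5$, and the HC-plane is recovered by blowing down one sixer of the distinguished double-six attached to the Schur quadric. Odd permutations interchange the two sixers of this double-six — equivalently, they carry the degree-$6$ line bundle $L=\pi^{*}\mathcal{O}_{\mathbb{P}^2}(1)$ to its double-six partner — whereas $A_5$ stabilises each sixer and so fixes the class $[L]$ in $\mathrm{Pic}^6(\mathcal{B})$. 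Fixing $[L]$ is precisely what allows $A_5$ to linearise $L$ and act on the three-dimensional complete linear system $H^0(\mathcal{B},L)\cong\langle X,Y,Z\rangle$, producing the homomorphism $A_5\to PGL_3(\mathbb{C})$; no such linearisation is available for odd permutations, which is why $S_5$ cannot be realised in $PGL_3(\mathbb{C})$ in this model. The hardest step is thus to pin down that the sixer underlying the HC blow-down is $A_5$- but not $S_5$-stable — equivalently that $[L]$ is fixed by $A_5$ alone — and it is here, rather than in any general feature of $S_5$, that the special geometry of Bring's curve is essential.
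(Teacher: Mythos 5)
Your proposal is correct in substance but takes a genuinely different route from the paper. The paper's proof goes in the opposite direction: it starts from an explicit irreducible three-dimensional representation of $A_5$ generated by the matrices $R$ and $S$, computes the invariants $i_2, i_6, i_{10}, i_{15}$ of that representation, and identifies the HC-polynomial as the distinguished member $\tfrac1{12}(100\,i_6-13\,i_2^{\,3})$ of the Dye--Winger pencil of $A_5$-invariant sextics; invariance of $F$ under the collineations $R,S$ is then immediate, and no transport through the birational map of Proposition~\ref{prop: HC valid model for Bring} is needed. You instead push the permutation action forward through that birational map, observe the two monomial collineations $Y\leftrightarrow Z$ and $[X:\zeta Y:\zeta^{-1}Z]$ (which do generate the order-$10$ dihedral stabiliser of the node $[1:0:0]$, a maximal subgroup of $A_5$, so one further even element suffices), and certify by elimination that the conjugate of a $3$-cycle collapses to a linear map. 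That is a complete and valid argument for the stated theorem, and its computational step is of the same character as the Macaulay2 checks the paper already leans on in Proposition~\ref{prop: HC valid model for Bring}. What the paper's route buys is the explicit pair $(R,S)$ and the invariant pencil, both reused heavily later (e.g.\ in \S\ref{sec: quotients by subgroups} and Proposition~\ref{prop: U is an automorphism}); what your route buys is an explanation, via the blow-down of a sixer of the distinguished double-six, of \emph{why} exactly $A_5$ and not $S_5$ linearises.

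One caveat on emphasis rather than correctness: the step you single out as hardest --- that the sixer underlying the HC blow-down is $A_5$- but not $S_5$-stable, equivalently that the class of $\pi^{*}\mathcal{O}_{\mathbb{P}^2}(1)$ is fixed by $A_5$ alone --- is not required by the theorem as stated, which asserts only that $A_5$ \emph{can} be realised by collineations, not that odd permutations cannot. Your second paragraph already proves the positive statement unconditionally; the double-six discussion is valuable context (and is indeed where Dye's Theorem~4 and the Schur quadric of Proposition~\ref{prop: HC valid model for Bring} live) but it is asserted rather than proved here, so it should be presented as a remark, not as a load-bearing step.
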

\begin{proof}
We will be explicit about the construction here as this will be profitable later; \cite{Braden2012} explains how this representation follows from \cite{Dye1995}. The group $A_5$ has two inequivalent irreducible three-dimensional representations, one of which is given by $\pangle{R, S}$ where 
$$
R:=\frac1{\sqrt5}\begin{pmatrix}
    1 & 2                 & 2 \\
    1 & \zeta^2+\zeta^{-2}&\zeta+\zeta^{-1}  \\
    1 & \zeta+\zeta^{-1}  & \zeta^2+\zeta^{-2}
  \end{pmatrix}, \, {R}^2=I , \,  
  S:=\begin{pmatrix}1&&\\ &\zeta&\\ &&\zeta^{-1}\end{pmatrix},\  {S}^5=I, \,  (RS)^3=I  .
$$
Note the other inequivalent irreducibel 3-dimensional representation comes from replacing $\zeta$ in the above with $\zeta^2 $ or $\zeta^3$. The invariants of the representation $\pangle{R, S}$ (when acting on $(X,Y,Z)^T$ via left multiplication) are
\begin{align*}
i_2 &=\pround{\frac{X}{2}}^2+\sum_{k=0}\sp4\left(\frac{\frac{X}{2}+Y\zeta^k+Z\zeta^{-k}}{\sqrt{5}}\right)^2=\frac{1}{2}X^2+2YZ,\\
i_6 &=\pround{\frac{X}{2}}^6+\sum_{k=0}\sp4\left(\frac{\frac{X}{2}+Y\zeta^k+Z\zeta^{-k}}{\sqrt{5}}\right)^6,\\
i_{10} &=\pround{\frac{X}{2}}^{10}+\sum_{k=0}\sp4\left(\frac{\frac{X}{2}+Y\zeta^k+Z\zeta^{-k}}{\sqrt{5}}\right)^{10},\\
i_{15} &= \left \lvert\frac{\partial\{ i_2,i_{6},i_{10}\} }{\partial \{X,Y,Z\}} \right \rvert.
\end{align*}
There is a polynomial relation between $i_{15}\sp{ 2}$ and $i_2$, $i_6$, $i_{10}$. In particular the vanishing of $i_6-\lambda i_2^3$ gives us Dye's one-parameter family of $A_5$-invariant sextics in $\mathbb{P}\sp2$; this pencil appears to have first been studied by Winger \cite{Winger1925}. This pencil yields genus-10 curves for generic $\lambda$ \cite{Dye1995}.\footnote{A variation on this pencil has been used to explain why Bring's curve is uniquely defined as an $A_5$-invariant curve of genus 4, but there is a 1-parameter family of dimension-4 $A_5$-invariant principally polarised abelian varieties, deforming $\Jac\mathcal{B}$ \cite{Zi2021, Looijenga2021}. The paper \cite{Melliez2003} gives further interesting representation theoretic perspectives on Bring's curve.} The special value of $\lambda=13/100$\footnote{For $\lambda=17/180$ the curve is of genus 0; for $\lambda=1/10$ the curve is reducible.} yields a genus-4 curve, namely Bring's curve, as we have that
$$
\frac1{12}\left(100 i_6-13 i_2\sp{ 3} \right)= X(Y^5+Z^5)+(XYZ)^2-X^4 YZ-2(YZ)^3 = F(X,Y,Z)  .
$$
\end{proof}

To complete our picture, we use the following result. 
\begin{prop}\label{prop: U is an automorphism}
The map 
\[
U:\pround{x,y} \mapsto \pround{-\frac{y^5 + x^3y - 3xy^2 + 1}{(y-x^2)(y^3-x)}, - \frac{y^2x-1}{y^3-x}} 
\]
is an automorphism of the HC-model, and together with $R$ and $S$ generates the entire automorphism group $S_5$. 
\end{prop}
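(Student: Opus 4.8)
The plan is to separate the two assertions: first that $U$ is a well-defined automorphism of the (smooth model of the) HC-curve, and second that it enlarges $\langle R,S\rangle = A_5$ to all of $S_5$. Throughout I use that the HC-model is singular only at the points of Lemma \ref{HCdoublepoints}, so that ``automorphism of the HC-model'' means automorphism of its normalisation $\mathcal{B}$; since $\mathcal{B}$ is a smooth projective curve, a dominant rational self-map of $\mathcal{B}$ is automatically a morphism, and it is an automorphism precisely when it has degree one.

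First I would show that $U$ maps $\mathcal{B}$ to itself. Writing $U(x,y) = (x', y')$ with the two given rational functions, the content is the single polynomial identity that $f(x',y')$ vanishes on the curve, i.e.\ that the numerator of $f(x',y')$, cleared of denominators, lies in the ideal $(f(x,y))$. This is a direct but heavy elimination check, carried out symbolically (reduction modulo $f$, or a Gr\"obner basis computation), in keeping with the computational methods of the paper. I would then certify that $U$ has degree one, either by exhibiting the inverse rational map explicitly or by checking that the induced pullback on the four-dimensional space of holomorphic differentials of Lemma \ref{lemma: basis of differentials} is invertible; degree one together with smoothness of $\mathcal{B}$ gives $U \in \Aut(\mathcal{B})$.

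For the generation statement I would invoke Theorem \ref{autA5}, that $\langle R,S\rangle = A_5$, together with Wiman's result that $\Aut(\mathcal{B}) = S_5$. Since $[S_5 : A_5] = 2$, the subgroup $A_5$ is normal and maximal, so $\langle R, S, U\rangle = S_5$ as soon as $U \notin A_5$, and everything reduces to certifying that $U$ represents an odd permutation. Here I would use that, for the non-hyperelliptic curve $\mathcal{B}$, the canonical representation $\Aut(\mathcal{B}) \hookrightarrow PGL_4(\mathbb{C})$ on the differentials $(v_1,v_2,v_3,v_4)$ is faithful: computing the matrix $M_U$ defined by $U^* v_i = \sum_j (M_U)_{ji}\, v_j$ places $U$ concretely in $PGL_4(\mathbb{C})$, and its order (equivalently its character) identifies the conjugacy class of $U$ in $S_5$. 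If $U$ has order $4$ or $6$ the conclusion is immediate, since $A_5$ has elements only of orders $1,2,3,5$; if instead $U$ has order $2$ I would distinguish a transposition from a double transposition by comparing the trace of $M_U$ with the character values of $S_5$ on the two classes, or equivalently by transporting $U$ through the birational equivalence of Proposition \ref{prop: HC valid model for Bring} to the $\mathbb{P}^4$-model and checking directly that it becomes an odd coordinate permutation of the $x_i$.

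I expect the main obstacle to be the bookkeeping in the first step: verifying the curve-preservation identity $f(x',y') \equiv 0$ on $\mathcal{B}$ involves substituting high-degree rational functions into $f$ and reducing, which is unenlightening by hand and is really the crux of the ``$U$ is an automorphism'' claim. The generation half is conceptually the more delicate point but is cheaply settled once $M_U$ is in hand, since the faithfulness of the canonical representation turns ``$U \notin A_5$'' into a finite check on orders or traces.
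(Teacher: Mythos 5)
Your proposal is correct and follows essentially the same route as the paper: verify by direct algebraic computation that $U$ preserves the curve, then reduce the generation claim to showing $U\notin A_5$ via its order, using that $A_5$ has no elements of order $4$ and that $A_5$ together with any odd element generates $S_5$. The only difference is the certificate for the order: the paper checks the orbit $a\mapsto c\mapsto b\mapsto d\mapsto a$ of the four distinguished points of Lemma \ref{lemma: local expansion in Bring's curve}, whereas you compute the induced matrix on the differentials; both are routine and equivalent, though your explicit attention to invertibility (degree one) is a point the paper leaves implicit in ``simple algebraic verification''.
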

\begin{proof}
The proof that $U$ is an automorphism is simple algebraic verification. In order to find this map, we adapted the methods of \cite{Bruin2019}. To see that $\pangle{R,S,U} \cong S_5$, note that $U$ is of order $4$, for example by checking that it has the orbit $a \mapsto c \mapsto b \mapsto d \mapsto a$. As such, $U$ corresponds to an odd permutation under the isomorphism $\Aut(\mathcal{B}) \cong S_5$, and a single odd permutation and all of $A_5$ together generate $S_5$.  
\end{proof}
By fixing a map from the HC-model to the $\mathbb{P}^4$-model as we did in the proof of Proposition \ref{prop: HC valid model for Bring}, we have fixed an isomorphism from the automorphism group of the curve (in the HC-model) to $S_5$, which we shall denote $\psi : \pangle{R,S,U} \to S_5$. For example, it is simple to verify that we have $U^2([X:Y:Z]) = [X:Z:Y]$. We see $U^2([L_1: L_2: L_3: L_4]) = [L_2 : L_1 : L_4 : L_3]$, and so $U^2([x_1: x_2 : x_3 : x_4 : x_5]) = [x_2 : x_1 : x_4 : x_3 : x_5]$, that is $(12)(34)$. Through similar calculation we can find 
\begin{align*}
    \psi(R) &= (13)(24), \quad
    \psi(S) = (1 3 4 2 5),\quad
    \psi(U) = (1 3 2 4).
\end{align*}There are myriad choices that can be made in constructing the birational transformation (such as the labelling of the coordinates in $\mathbb{P}^3$ and the ordering of the rows of $A$, or indeed composing with any automorphism of the $\mathbb{P}^4$-model), and changing these would give different isomorphisms to $S_5$. 

As we have previously described, the uniqueness of the
quadric $Q$ whose intersection with a cubic yields the canonical model of the curve leads to an isomorphism of the automorphism group of the curve and a subgroup of $\Aut(Q)$. For Bring's curve $\mathcal{Q} \cong\mathbb{P}^1 \times \mathbb{P}^1$ and we obtain an  isomorphism from $S_5$ to a subgroup of $\Aut(\mathbb{P}^1 \times \mathbb{P}^1) = \mathbb{Z}_2 \ltimes (PGL_2(\mathbb{C}) \times PGL_2(\mathbb{C}))$ which we now write down. Let $([u:v], [z:w])$ be the coordinates on $\mathbb{P}^1 \times \mathbb{P}^1$, then using the birational map constructed in Proposition \ref{prop: HC valid model for Bring} one can conjugate the standard irreducible 4-dimensional representation of $S_5$ on $[x_1: x_2 : x_3 : x_4]$ to an action on $[v_1 : v_2 : v_3 : v_4]$. The resulting action of $(12)$ is (projectively)
\begin{align}\label{can12}
    \begin{pmatrix}
    v \\ u
    \end{pmatrix} &\mapsto \begin{pmatrix}
    j & -1 \\ -1 & -j
    \end{pmatrix} \begin{pmatrix}
    w \\ z
    \end{pmatrix}:=A \begin{pmatrix}
    w \\ z
    \end{pmatrix}, \quad
    \begin{pmatrix}
    w \\ z
    \end{pmatrix} \mapsto \begin{pmatrix}
    -1 & j-1 \\ j-1 & 1
    \end{pmatrix}\begin{pmatrix}
    v \\ u
    \end{pmatrix}:=B\begin{pmatrix}
    v \\ u
    \end{pmatrix}, 
\end{align}
where $j = - \zeta^3 - \zeta^2$ satisfies $j^2-j-1=0$; that is it is the $j$ defined by Dye. One can show that $(34)$ has the same action, where the other root $j^\prime = \zeta^3+\zeta^2+1$ is taken. One can check that $AB = 1 \in PGL_2(\mathbb{C})$, consistent with the fact that $(12)^2=1 \in S_5$. Note the transposition interchanges the two copies of $\mathbb{P}^1$, which is the action of the semi-direct product with $\mathbb{Z}_2$. Combining the two transforms one gets that $(12)(34)$ acts as 
\[
[u:v] \mapsto [-v:u] \, , \quad [z:w] \mapsto [-w:z]  . 
\]
This fixes each copy of $\mathbb{P}^1$ and is the antipodal map on each. 
Moreover, we can calculate the action of $(145)$ on the copies to be 
\begin{align}\label{can145}
    \begin{pmatrix}
    v \\ u
    \end{pmatrix} &\mapsto \begin{pmatrix}
    \zeta & -\zeta^3-\zeta \\ -\zeta^2-1 & -\zeta^2
    \end{pmatrix} \begin{pmatrix}
    v \\ u
    \end{pmatrix}, \quad
    \begin{pmatrix}
    w \\ z
    \end{pmatrix} \mapsto \begin{pmatrix}
    \zeta^3+1 & \zeta^2 \\ \zeta^4 & -\zeta^3-\zeta
    \end{pmatrix}\begin{pmatrix}
    w \\ z
    \end{pmatrix} . 
\end{align}
As $\pangle{(12)(34), (145)} \cong A_5$, we discover that the action of $A_5$ does not interchange the two copies of $\mathbb{P}^1$, but odd-parity elements in $S_5$ do. Here $A_5$ is given by the diagonal embedding in $PGL_2(\mathbb{C}) \times PGL_2( \mathbb{C})$. 
\section{Geometric Points}\label{sec: geometric points}
We now have a good understanding of how the automorphism group acts on the curve, and so before looking at quotient Riemann surfaces in \S\ref{sec: quotients by subgroups} we want to first consider orbits of points that have geometric significance on the curve. These points will have important connections to the function theory of the curve; they are also related to physical aspects of Euclidean realisations (i.e. can be immersed in Euclidean 3-space) of the curve. Such orbits are characterised by the following result from Wiman.
\begin{prop}[\cite{Wiman1895}]
There are only 3 orbits orbits of points of size less than $120$ on $\mathcal{B}$ and these have sizes 24, 30, and 60 respectively.
\end{prop}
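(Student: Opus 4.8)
The plan is to exploit the relationship between orbit sizes and point stabilisers. For a finite group $G=S_5$ acting on the curve $\mathcal{B}$, the orbit-stabiliser theorem gives $|G\cdot p| = |G|/|\mathrm{Stab}(p)| = 120/|\mathrm{Stab}(p)|$. Since $|G\cdot p| < 120$ forces $|\mathrm{Stab}(p)| > 1$, the non-generic orbits correspond precisely to points fixed by some nontrivial subgroup. The strategy is therefore: first, classify which subgroups $H \leq S_5$ can arise as a stabiliser of a point on $\mathcal{B}$; second, for each such $H$, count the points fixed by $H$ and deduce the resulting orbit sizes.

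The key constraint comes from the Riemann--Hurwitz formula applied to the quotient map $\mathcal{B} \to \mathcal{B}/H$. A point $p$ has stabiliser containing $H$ only if $H$ is cyclic, since the stabiliser of a point on a Riemann surface under a holomorphic group action is always cyclic (it acts faithfully on the tangent line at $p$, embedding into $\mathbb{C}^\times$). So the first reduction is that $\mathrm{Stab}(p)$ is one of the cyclic subgroups of $S_5$: those of orders $2,3,4,5,6$. For each such cyclic $H=\langle\sigma\rangle$, I would compute the number of fixed points of $\sigma$ via Riemann--Hurwitz for the cover $\mathcal{B}\to\mathcal{B}/\langle\sigma\rangle$, using the known genus $g=4$ and the genus of the quotient (which can be read off from the analysis in \S\ref{sec: quotients by subgroups}, or computed directly). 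The candidate orbit sizes are then $120/|H| \in \{60, 40, 30, 24, 20\}$, and the task is to determine which of these are actually realised, and to rule out the larger stabilisers (orders $4,6$) from producing \emph{new} small orbits beyond refining the orbits of their subgroups.

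The main obstacle is the careful bookkeeping needed to avoid double-counting and to pin down exactly which orbits occur. A point fixed by an order-6 element is also fixed by order-2 and order-3 elements, so its orbit (size $20$) is a sub-orbit structure that must be reconciled with the order-$2$ and order-$3$ fixed-point sets. The delicate part is showing that the full stabiliser at the relevant points is \emph{exactly} the claimed cyclic group and no larger, and that the three asserted sizes $24, 30, 60$ exhaust all orbits of size below $120$ while, e.g., order-$4$ and order-$6$ stabilisers do not contribute additional distinct small orbits. Concretely, I expect the size-$60$ orbit to come from the $\binom{5}{2}=10$ transpositions' fixed loci (stabiliser $C_2$), the size-$30$ orbit from order-$4$ or double-transposition stabilisers, and the size-$24$ orbit from the $5$-cycles (stabiliser $C_5$, giving $120/5=24$); these are precisely the Weierstrass points, consistent with the $24 = |W|$ count flagged in the introduction.

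To execute this cleanly I would tabulate, for each conjugacy class of cyclic subgroups, the fixed-point count from Riemann--Hurwitz and the resulting orbit, then verify mutual consistency against the total and against the explicit fixed points computed in \S\ref{sec: geometric points}. The hardest single step is establishing the fixed-point counts rigorously, since it requires knowing the quotient genera; I would lean on the explicit generators $R, S, U$ and the quotient computations of \S\ref{sec: quotients by subgroups}, or alternatively verify the counts directly in the HC-model using the explicit action.
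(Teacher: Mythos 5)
Your first step coincides with the paper's: stabilisers of points are cyclic, so the candidate orbit sizes are $120/|C_k|$ for $k\in\{2,3,4,5,6\}$, i.e.\ $\{60,40,30,24,20\}$. After that the routes diverge. The paper applies Riemann--Hurwitz \emph{once}, to the full quotient $\mathcal{B}\to\mathcal{B}/S_5$, obtaining the single Diophantine equation $3=120(g-1)+30a_{60}+40a_{40}+45a_{30}+48a_{24}+50a_{20}$ in the numbers $a_k$ of orbits of size $k$; this has no solution for $g\ge 1$ and the unique solution $a_{60}=a_{30}=a_{24}=1$ for $g=0$, which settles everything (in particular the absence of $C_3$- and $C_6$-stabilised points) using only $g_{\mathcal{B}}=4$ and $|S_5|=120$, with no fixed-point computations at all. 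Your route instead runs Riemann--Hurwitz separately for each cyclic $\langle\sigma\rangle$, which is valid but costs more: you must supply the fixed-point counts (or the quotient genera) as independent input, and you should be careful of circularity here --- the table in \S\ref{sec: quotients by subgroups} derives the quotient genera \emph{from} directly computed fixed-point counts, so you must compute $|\operatorname{Fix}(\sigma)|$ by hand (e.g.\ as eigenvectors of permutation matrices lying on the $\mathbb{P}^4$-model), not read the genera off that table first. You then also need the stabiliser-refinement bookkeeping you describe (e.g.\ once $3$-cycles are shown fixed-point-free, order-$6$ elements are too, so the six points fixed by a transposition have stabiliser exactly $C_2$). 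Both arguments work; the paper's buys economy and a clean uniqueness statement, yours buys the explicit identification of which points lie in which orbit. One factual slip to correct: the Weierstrass points are the orbit of size $60$ (stabilised by transpositions; $g^3-g=60$ of them, all of weight one), not the orbit of size $24$ --- the $24$ points fixed by $5$-cycles are the face-centres.
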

\begin{proof}
By \cite[III.7.7]{Farkas1992} we know the stabilizer of a point must be a cyclic group. The cyclic subgroups of $S_5$ are $C_2$, $C_3$, $C_4$, $C_5$, $C_6$; the corresponding orbits would thus be of (respective) sizes $60$, $40$, $30$, $24$ or $20$. To obtain Wiman's result we must show that $C_3$ does not fix a point on Bring's curve. Using Riemann-Hurwitz we have
$$
3=120(g-1)+30 a_{60}+40 a_{40}+45 a_{30}+48 a_{24}
+50 a_{20}
$$
where $a_k\ge0$ are the number of $S_5$ orbits of size
$k$. There are no solutions to this for $g\ge1$. For $g=0$ we have the unique solution $1=a_{60}=a_{30}=a_{24}$. This shows there can be no points with $C_3$ stabilizer.
\end{proof}
These points and corresponding geometric structures are important when relating Clebsch's diagonal surface to Hilbert modular surfaces \cite{Hirzebruch1976, Bell2004}. We identify these orbits as the geometric points on the curve defined in \cite{Singerman1997}.\footnote{Being a geometric point on a curve is a priori not an interesting statement unless we know the corresponding map is regular, as we have in this case.} Explicitly they are the vertices, face-centres, and edge-centres of the the universal map $\pbrace{5,4}_6$ - the Petrie polygon (as defined in \cite[\S 8.6]{Coxeter1980}) of degree 6 coming from the tiling of the hyperbolic disk by pentagons, where 4 meet at a vertex \cite{Singerman1988}. It is noted in \cite{Weber2005} that this tessellation has a Euclidean realisation as a dodecadodecahedron (Figure \ref{fig: DID}). This has 30 vertices, 60 edges, and 24 faces, giving genus 
\[
g = 1 - \frac{V-E+F}{2} = 4  ,
\]
as we expect. In a recent paper, this connection to the dodecadodecahedron was used to identify Bring's curve as the moduli space of equilateral plane pentagons up to the action of the conformal group \cite{Ramshaw2022}. The (small) stellated dodecahedron (Figure \ref{fig: SSD}) also has genus 4 (having $V=12=F$, $E=30$), coming from the tessellation $\pbrace{5/2, 5} \cong \pbrace{5, 5  |  3}$, which can be interpreted as adding 3 `holes' to the $\pbrace{5,5}$ tessellation \cite[\S 8.5]{Coxeter1980}.  This $\pbrace{5,5|3}$ tessellation has automorphism group $C_2 \times A_5$, which is an index-2 subgroup of $C_2 \times S_5$, the automorphism group of $\pbrace{5,4}_6$. This is due to the map $D_1$ defined in \cite[\S 3.1]{Hendriks2013}, which maps the dodecadodecahedron to the small sellated dodecahedron. We include both these tessellations in Figure \ref{fig: tessellations} below. Klein connects the small stellated dodecahedron to Bring's curve through a degree-3 covering $\mathcal{B} \to \mathbb{P}^1$ constructed from the hyperbolic triangles giving the tessellation \cite{Weber2005}, and we will see this map later in \S\ref{sec: theta characteristics} in a different context.

\begin{figure}
    \centering
     \begin{subfigure}[c]{0.49\textwidth}
         \centering
         \includegraphics[width=\textwidth]{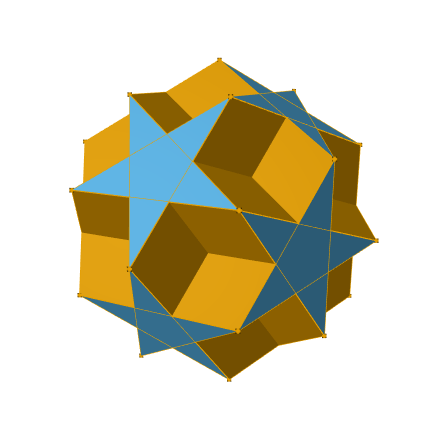}
         \caption{Dodecadodecahedron.}
         \label{fig: DID}
     \end{subfigure}
     \begin{subfigure}[c]{0.49\textwidth}
         \centering
         \includegraphics[width=\textwidth]{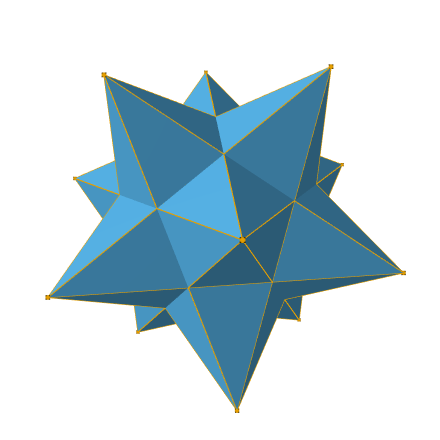}
         \caption{Small Stellated Dodecahedron.}
         \label{fig: SSD}
     \end{subfigure}
     \caption{Geometric realisations.}
     \label{fig: geometric realisations}
\end{figure}

\begin{figure}
    \centering
     \begin{subfigure}[c]{0.49\textwidth}
         \centering
         \includegraphics[width=\textwidth]{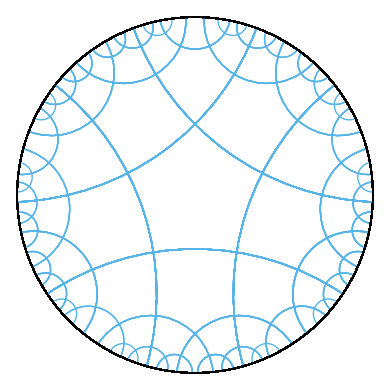}
         \caption{$\pbrace{5, 4}$ tessellation.}
         \label{fig: 54 tessellation}
     \end{subfigure}
     \begin{subfigure}[c]{0.49\textwidth}
         \centering
         \includegraphics[width=\textwidth]{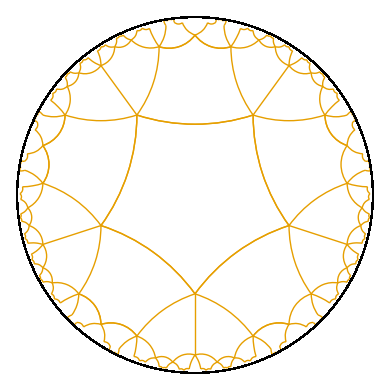}
         \caption{$\pbrace{5, 5}$ tessellation.}
         \label{fig: 55 tessellation}
     \end{subfigure}
     \caption{Hyperbolic tilings. }
     \label{fig: tessellations}
\end{figure}

\subsection{Weierstrass Points}\label{subsec: Weierstrass points}
We recall the definition of an important class of points.
\begin{definition}[\cite{McMullen2014}, Proposition 12.6, Theorem 12.7]\label{def: Weierstrass points}
A \bam{Weierstrass Point} on a curve $X$ of genus $g$ with canonical embedding $X_{c} \subseteq \mathbb{P}^{g-1}$ is a point $P$ satisfying one of the following equivalent conditions:
\begin{enumerate}
    \item there exists a hyperplane $H \subset \mathbb{P}^{g-1}$such that $H \cap X_c$ has multiplicity at least $g$ at $P$,
    \item there exists a holomorphic differential on $X$ vanishing at $P$ with order $\geq g$, 
    \item there exists a meromorphic function on $X$ with poles just at $P$ of order $\leq g$, and 
    \item the Wronskian determinant given by $\operatorname{Wr}(P) = \det \pround{\frac{d^i \omega_j}{dz^i}}_{i,j = 0, \dots, g-1}$, where $\{ \omega_j \}$ is a basis of holomorphic differentials and $z$ is a local coordinate around $P$, vanishes.
\end{enumerate}
\end{definition}
In \cite{Singerman1997} the Weierstrass points are shown (implicitly) to correspond to the edge-centres of the universal map. This shows that the order 2 rotation that permutes the vertices and face-centres adjacent to an edge-centre will preserve the corresponding Weierstrass point. They can also be interpreted geometrically as pairwise-symmetric distributed along the edges of the small stellated dodecahedron. As Weber identifies the order-3 symmetry as the rotation about the axis through opposite vertices of the \textit{unstellated} dodecahedron, one might wonder from this picture whether Weierstrass points are fixed points of the order-3 permutations in the group.  This turns out not be the case and a counting argument helps elucidate: the small stellated dodecahedron has 12 faces, which in turn means we want to have $\frac{60}{12}=5$ Weierstrass points per face. Hence where three faces overlap there must be three Weierstrass points `stacked' there, which are invisibly permuted by the action.

Having now identified the Weierstrass points as some of the geometric points, we give a concrete result about what the Weierstrass points are. 
\begin{prop}[\cite{Edge1978}]\label{WPprop}
Bring's curve has 60 Weierstrass points, on which $\Aut(\mathcal{B})$ acts transitively. Letting $\{ \alpha,\beta,\gamma\}$ be the roots of the cubic $x^3+2x^2+3x+4$, these are given in the $\mathbb{P}^4$-model by $W_{ijk}$ where, for example,
\[
W_{345} = [1:1:\alpha:\beta:\gamma]  . 
\]
\end{prop}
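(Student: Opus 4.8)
The plan is to exhibit the $W_{ijk}$ explicitly, check they lie on $\mathcal{B}$, show they form a single $S_5$-orbit of size $60$, verify that one of them is a Weierstrass point, and then close with the global weight count: on a genus-$g$ curve the Weierstrass weights sum to $(g-1)g(g+1)$, which for $g=4$ equals $60$, so there are \emph{at most} $60$ Weierstrass points, with equality precisely when every weight is $1$. The whole argument then reduces to producing $60$ distinct Weierstrass points.

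First I would pin down the cubic. Writing a candidate point with two equal coordinates normalised to $1$, the defining equations $H_1=H_2=H_3=0$ become the power-sum conditions $p_1=p_2=p_3=-2$ on the remaining three values, and Newton's identities convert these into $e_1=-2$, $e_2=3$, $e_3=-4$; thus $\alpha,\beta,\gamma$ are exactly the roots of $x^3+2x^2+3x+4$. Since $1$ is not a root and the roots are distinct, assigning $(\alpha,\beta,\gamma)$ to three of the five slots (the other two being $1$) gives $\binom{5}{2}\cdot 3!=60$ distinct points, and a coordinate permutation fixes $[1:1:\alpha:\beta:\gamma]$ projectively only if it fixes the two equal slots, so the stabiliser is the corresponding transposition and the orbit has size $120/2=60$. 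It is clarifying to note that $e_1=e_2=e_3=0$ \emph{identically} on $\mathcal{B}$, so every point presents its coordinates as the roots of a Bring--Jerrard quintic $z^5+e_4 z-e_5$, and the $W_{ijk}$ are exactly the points where this quintic acquires a double root; at $W_{345}$ one has $z^5-5z+4=(z-1)^2(z^3+2z^2+3z+4)$, which displays the cubic.

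The crux is to show $W_{345}$ is a Weierstrass point, and here I would use the stabilising involution rather than a brute-force expansion. The locus $\{H_1=0\}\cong\mathbb{P}^3$ carries the canonical model of $\mathcal{B}$ as the intersection of the quadric $H_2=0$ and the cubic $H_3=0$, so the $x_i$ restrict to a spanning set of $H^0(K_{\mathcal{B}})$ subject only to $\sum x_i=0$, and I may test condition (4) of Definition~\ref{def: Weierstrass points}, the vanishing of the Wronskian of $x_1,\dots,x_4$. Let $\sigma=(12)$ generate the stabiliser; being a nontrivial involution fixing $W_{345}$, it acts on a local parameter $t$ by $t\mapsto -t$. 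After normalising the local lift $X(t)=(x_1(t),\dots,x_5(t))$ so that $M_\sigma X(t)=X(-t)$ exactly (possible since the scaling discrepancy $\lambda(t)$ satisfies $\lambda(t)\lambda(-t)=1$, hence is odd and can be absorbed), the osculating vectors split by parity: $X(0),X''(0)$ lie in the $(+1)$-eigenspace of $\sigma$ on $H^0(K_{\mathcal{B}})$, while $X'(0),X'''(0)$ lie in the $(-1)$-eigenspace. An eigenvalue count shows this $(-1)$-eigenspace is spanned by $x_1-x_2$ alone, hence is one-dimensional; therefore $X'(0)$ and $X'''(0)$ are proportional, the four vectors $X(0),X'(0),X''(0),X'''(0)$ are linearly dependent, the Wronskian vanishes, and $W_{345}$ is a Weierstrass point.

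Finally, $\Aut(\mathcal{B})$ permutes Weierstrass points, so the whole orbit consists of Weierstrass points; they already saturate the total weight $60$, whence they are \emph{all} the Weierstrass points, each of weight $1$, and the action is the transitive one found above. I expect the genuine obstacle to be this Weierstrass verification: the incidence and orbit computations are routine symmetric-function bookkeeping, whereas turning ``fixed by an involution'' into four-fold osculation requires the parity argument together with the one-dimensionality of the $(-1)$-eigenspace — which is exactly what singles out $W_{345}$, rather than an arbitrary involution-fixed point, as a Weierstrass point. A purely computational alternative is to form the Wronskian of the explicit basis $v_1,\dots,v_4$ in the HC-model and factor it, transporting its $60$ simple zeros through the birational map of Proposition~\ref{prop: HC valid model for Bring}.
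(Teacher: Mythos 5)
Your proposal is correct, and it reaches the conclusion by a more self-contained route than the paper. The paper's proof is essentially a citation: it quotes Edge's identification of the Weierstrass points with the $60$ intersections of $\mathcal{B}$ with the ten planes $\{x_i=x_j\}$, which in turn rests on Wiman's observation that these intersections are stalls (points of four-fold osculation), together with the classical fact that stalls of the canonical embedding are exactly the Weierstrass points; transitivity then follows from the same orbit--stabiliser count you give. You instead prove the four-fold osculation directly: the Newton-identity computation pins down the cubic $x^3+2x^2+3x+4$ (your factorisation $z^5-5z+4=(z-1)^2(z^3+2z^2+3z+4)$ is a nice way to see it), and the parity argument --- the stabilising transposition acts by $t\mapsto -t$, the odd derivatives of the normalised canonical lift land in its $(-1)$-eigenspace, and that eigenspace is one-dimensional for a transposition --- forces $X'(0)$ and $X'''(0)$ to be proportional and the Wronskian to vanish. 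Combined with the weight count $g^3-g=60$ this also establishes completeness, which the paper leaves implicit in the citation. What your approach buys is transparency about \emph{why} transposition-fixed points are Weierstrass while the $(2,2)$-cycle-fixed vertices need not be (the $(-1)$-eigenspace jumps to dimension two there), at the cost of a longer argument; the paper's version buys brevity and historical context. One cosmetic slip: the scaling discrepancy $\lambda(t)$ is not itself odd (it satisfies $\lambda(0)=1$); rather $\log\lambda$ is odd, which is what lets you absorb it by replacing $X$ with $\lambda^{-1/2}X$. This does not affect the argument.
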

\begin{proof}
Edge, working in the $\mathbb{P}^4$-model, identifies the Weierstrass points with the 60 intersections of the curve with the 10 planes $\Pi_{ij} = \pbrace{ x_i = x_j}$. To do this, Edge quotes \cite{Wiman1895} to show that these intersection points are stalls\footnote{A \bam{stall} of a plane curve is a point where the osculating (hyper)plane has a 4-point intersection \cite{Edge1971}.}, i.e. inflection points of certain linear series, and for the canonical embedding these are exactly the Weierstrass points \cite[p.~37]{Arbarello1985}. Simple algebra then gives the exact expression we write down. This viewpoint makes it clear that the Weierstrass points at the intersection with $\Pi_{ij}$ are preserved by the transposition\footnote{In Theorem 4 of \cite{Dye1995}, these transpositions are associated with the Brianchon points of the Clebsch hexagon $H$ from the proof of Proposition \ref{prop: HC valid model for Bring}.} $(ij)$ only, so have orbits of size $\frac{120}{2}=60$, and as the automorphism restricts to a permutation of the Weierstrass points, the action is then transitive.
\end{proof}
With our naming convention, note $W_{ijk}$ is defined by $x_i=\alpha$, $x_j = \beta$, $x_k = \gamma$. If we choose a different labelling of the roots of the cubic, this would give a different labelling of the Weierstrass points. The Weierstrass points split as $60 = 6 \times 10$, with 6 Weierstrass points being fixed by each of the 10 involutions in $S_5$. 

The property that the automorphism group acts transitively on the Weierstrass points is very rare, as characterised by the following result. 
\begin{theorem}[\cite{Laing2010}, Theorem 15]
If $X$ is a Riemann surface of genus $g>2$ with $g^3-g$ Weierstrass points on which $\Aut X$ acts transitively then either
\begin{itemize}
    \item $g=4$ and $X$ is Bring's curve,
    \item $g=3$ and $X$ is Klein's curve, or
    \item $g=3$ and $\Aut X \cong S_4$. 
\end{itemize}
\end{theorem}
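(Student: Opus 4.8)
The plan is to turn transitivity into a divisibility constraint on $|\Aut X|$, bound the genus using the Hurwitz bound, and eliminate the remaining cases by a Riemann--Hurwitz signature analysis that leans on the weight-one hypothesis. The starting point is the Weierstrass weight formula: on any genus-$g$ curve the Weierstrass points carry total weight $g^3-g=(g-1)g(g+1)$. Hence having exactly $g^3-g$ \emph{distinct} Weierstrass points is equivalent to every one of them having weight one, i.e. gap sequence $1,2,\dots,g-1,g+1$; equivalently each carries a meromorphic function with a single pole of order precisely $g$. It is this rigidity, and not transitivity alone, that ultimately forces the conclusion.

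Since $\Aut X$ permutes the Weierstrass points transitively, orbit--stabilizer gives $(g-1)g(g+1)\mid|\Aut X|$, and the Hurwitz bound $|\Aut X|\le 84(g-1)$ then yields $g(g+1)\le 84$, so $3\le g\le8$. Writing $m$ for the order of the stabilizer of a Weierstrass point, which is cyclic by \cite[III.7.7]{Farkas1992}, we have $|\Aut X|=m(g-1)g(g+1)$ with $m\le 84/\pround{g(g+1)}$. Comparing the forced lower bound $(g-1)g(g+1)\le|\Aut X|$ with the known maximal orders of automorphism groups in each genus immediately eliminates $g=6$ (which would need $|\Aut X|\ge210$, while $150$ is maximal) and $g=8$ (which would need $\ge504$, while $336$ is maximal), and reduces $g=5$ and $g=7$ to the single candidate orders $120$ and $336$ respectively.

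For the remaining genera I would run the Riemann--Hurwitz identity
\[
2g-2=|\Aut X|\left(2g'-2+\sum_i\left(1-\frac{1}{m_i}\right)\right)
\]
for the quotient $X\to X/\Aut X$, using that the single Weierstrass orbit contributes a branch point of index $m$ (and is a free orbit when $m=1$), and that the local action $z\mapsto\zeta_m z$ of the stabilizer at a weight-one point must be compatible with the gap sequence $1,\dots,g-1,g+1$ via an eigenvalue condition on the cotangent action. Only finitely many signatures survive for each candidate order, and matching these against the classification of automorphism groups in genera $3$ through $7$ leaves precisely the extremal cases: order $168$ and order $24$ in genus $3$, and order $120$ in genus $4$. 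A final identification step recognises these as Klein's quartic, the genus-$3$ curve with $\Aut X\cong S_4$, and Bring's curve.

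The hard part will be this last elimination: excluding the lone candidate groups of order $120$ in genus $5$ and $336$ in genus $7$, and the non-maximal orders (the multiples $48,96,\dots$ of $24$ in genus $3$ and order $60$ in genus $4$). The bare numerical inequalities leave these open; what closes them is the weight-one condition, which fixes the rotation numbers at the Weierstrass points and thereby constrains the admissible signatures far more tightly than transitivity by itself.
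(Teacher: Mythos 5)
The paper offers no proof of this statement at all --- it is quoted directly from Laing and Singerman \cite{Laing2010} --- so your attempt can only be judged on its own terms. The numerical scaffolding you set up is correct: having $g^3-g$ distinct Weierstrass points is indeed equivalent to every one having weight one; transitivity gives $(g-1)g(g+1)\mid\lvert\Aut X\rvert$; combined with the Hurwitz bound this forces $3\le g\le 8$; and comparing $(g-1)g(g+1)$ with the known maximal orders of automorphism groups ($150$ in genus $6$, $336$ in genus $8$) correctly kills $g=6,8$ and reduces $g=5,7$ to the single candidate orders $120$ and $336$.

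However, the theorem is not proved, and you acknowledge as much: everything from ``For the remaining genera I would run the Riemann--Hurwitz identity\dots'' onward is a plan rather than an argument, and the content of the theorem lives precisely in the steps you defer. Concretely, you must still rule out order $120$ in genus $5$ and order $336$ in genus $7$; in genus $3$ you must dispose of the orders $48$, $96$, $144$, etc., some of which genuinely occur as automorphism group orders (e.g.\ the Fermat quartic has a group of order $96$, and is excluded only because its $12$ Weierstrass points each have weight $2$ --- exactly the kind of verification your outline never performs); and in genus $4$ the order-$60$ case is more delicate than a counting argument can handle, since $A_5\le S_5$ already acts simply transitively on the $60$ Weierstrass points of Bring's curve, so one must show that any genus-$4$ curve with such an action is forced to \emph{be} Bring's curve, whereupon its full automorphism group is $S_5$. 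The ``eigenvalue condition on the cotangent action'' you invoke is the right kind of tool (it is the Lewittes--Schoeneberg analysis of rotation numbers at fixed points against the gap sequence $1,\dots,g-1,g+1$), but you never state the condition, never enumerate the surviving signatures, and never carry out the matching against the classification of group actions in genera $3$--$7$. As written, the proposal establishes the admissible genera and group orders but asserts, rather than derives, the identification of the three curves in the conclusion.
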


\begin{remark}
As we have an explicit birational map from our plane model to our canonical embedding of the curve, we can get the explicit forms of the Weierstrass points in the HC-model using Edge's identification of the Weierstrass points in the $\mathbb{P}^4$-model. If one does not have this information, it is still possible to calculate the Weierstrass points using Sage and some educated guesswork. Using computer algebra, and the characterisation of Weierstrass points as zeros of the Wronskian determinant, one can check that in the HC-model the Weierstrass points have base coordinates at the 60 roots of the polynomial equations  
\begin{align*}
    0 &= x^{12} - 32x^{11} - 114x^{10} - 200x^9 + 100x^8 + 48x^7 - 936x^6 + 1728x^5 - 2000x^4 + 3200x^3 \\
    &\phantom{=} - 2624x^2 + 768x - 64  , \\
    0 &= x^{24} - 24x^{23} + 1306x^{22} - 2864x^{21} + 10096x^{20} - 32704x^{19} - 5704x^{18} - 41824x^{17}  \\
    &\phantom{=} + 43056x^{16} + 831616x^{15} + 837856x^{14} + 992256x^{13} + 2603136x^{12} + 1238016x^{11}  \\
    &\phantom{=} + 1560576x^{10} + 5584896x^9 + 3357696x^8 + 3838976x^7 + 5856256x^6 + 2543616x^5  \\
    &\phantom{=} + 2200576x^4 + 1355776x^3 + 454656x^2 + 65536x + 4096  , \\
    0 &= x^{24} + 56x^{23} + 1176x^{22} - 1784x^{21} - 3904x^{20} + 36096x^{19} + 12776x^{18} - 211904x^{17}  \\
    &\phantom{=} + 304736x^{16} + 431616x^{15} + 339456x^{14} - 1985664x^{13} - 625344x^{12} + 1034496x^{11}  \\
    &\phantom{=} + 3512576x^{10} - 584704x^9 - 3572224x^8 - 2018304x^7 + 3303936x^6 + 3055616x^5  \\ 
    &\phantom{=} + 1099776x^4 + 45056x^3 + 229376x^2 - 16384x + 4096  .
\end{align*}
One can check that the Galois group of each polynomial is $C_4 \times S_3$. We have seen that in the HC-model the automorphisms have coefficients in $\mathbb{Z}[\zeta]$, and so we know the splitting field must be an extension of $\mathbb{Q}[\zeta]$, which accounts for the $C_4$ factor in the Galois group. $S_3$ has a subgroup of order $2$, corresponding to an extension of degree-2, and a brute force calculation shows that we also wish to adjoin $i\sqrt{2}$. This has already nearly reduced the problem, and then one needs a small moment of inspiration to find the last thing to adjoin. Looking at \cite{Riera1992} then may inspire one to adjoin the real root of the polynomial $x^3+7x^2+8x+4$, say $\xi$, and this gives the full splitting field. We can solve the cubic explicitly using Cardano's formula to find 
\[
\xi = -\frac{1}{3}\pbrace{7+  \psquare{\sqrt[3]{145+30\sqrt{6}}+\sqrt[3]{145-30\sqrt{6}}}}  , 
\]
and as such we could also take our splitting field to be
\[
\mathbb{Q}\psquare{\zeta, i\sqrt{2}, \sqrt[3]{145+30\sqrt{6}}}  . 
\]
We observe by Cardano's formula that $\alpha, \beta, \gamma \in \mathbb{Q}[i\sqrt{2}, \sqrt[3]{30+15\sqrt{6}}]$, and this latter field is isomorphic to $\mathbb{Q}[i\sqrt{2}, \sqrt[3]{145+30\sqrt{6}}]$. With these expression for the Weierstrass points, and the explicit knowledge of the automorphism group as an action on affine coordinates, we can find explicitly the transposition that preserves a given Weierstrass point. 

Note that an alternative method for computing Weierstrass points algorithmically has been implemented in Magma \cite{Hess2002a}.
\end{remark}
We now have seen how our Weierstrass points satisfy characterisations 1 and 4 of Definition \ref{def: Weierstrass points}, and we complete the picture with the following result which we believe new. 
\begin{prop}\label{prop: function and divisor associated to WP}
Define the points $P_{345}$, $P^\prime_{345}$ corresponding to the Weierstrass point $W_{345}$ (and similarly for the other Weierstrass points) by 
\begin{align*}
    P_{345} &= [\delta^\prime : \delta : (-43\alpha^2 - 113\alpha - 92)\beta/112 \\
    &\phantom{=}- (13\alpha^2 - 27\alpha - 20)/28 : (43\alpha^2 + 25\alpha - 4)\beta/112 + (13\alpha^2 + 3\alpha - 24)/28 : 1] \\
    P^\prime_{345} &= [\delta : \delta^\prime : (-43\alpha^2 - 113\alpha - 92)\beta/112 \\
    &\phantom{=} - (13\alpha^2 - 27\alpha - 20)/28 : (43\alpha^2 + 25\alpha - 4)\beta/112 + (13\alpha^2 + 3\alpha - 24)/28 : 1].
\end{align*}
Here $\alpha$, $\beta$ were defined in Proposition \ref{WPprop} and $\delta$, $\delta^\prime$ are the roots of 
\[
x^2 - \psquare{\frac{(11 \alpha + 12)\beta + 4(3\alpha + 2)}{14}}x + \psquare{\frac{23(155\alpha + 388)\beta + 92(97\alpha + 172)}{6272}}  .
\]
Then there is a holomorphic differential $\nu_{345}$ on Bring's curve with divisor 
\[
4W_{345} + P_{345} + P_{345}^\prime  ,
\]
and a meromorphic function on Bring's curve with divisor
\[
P_{145} + P_{145}^\prime + P_{245} + P_{245}^\prime -4W_{345}  . 
\]
\end{prop}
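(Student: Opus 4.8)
The plan is to handle the two divisor assertions separately, using throughout that every Weierstrass point of $\mathcal{B}$ has weight one, so that the gap sequence at $W_{345}$ is $1,2,3,5$. By Definition \ref{def: Weierstrass points}(2) this means there is a holomorphic differential vanishing to order exactly $4$ at $W_{345}$, while by (3) the first non-gap is $4$, so that $\ell(4W_{345}) = 2$ and there is a non-constant meromorphic function whose only pole is a pole of order $4$ at $W_{345}$. Both objects therefore exist abstractly; the real content is the identification of their divisors.

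For the differential I would write $\nu = \sum_i c_i v_i$ in the basis of Lemma \ref{lemma: basis of differentials} and impose vanishing to order $\geq 4$ at $W_{345}$. Concretely this is the osculating-hyperplane (stall) condition appearing in Edge's proof of Proposition \ref{WPprop}: in the canonical $\mathbb{P}^3$ model one takes the unique hyperplane $H$ with four-point contact at the image of $W_{345}$, which translates into a linear system on $[c_1:c_2:c_3:c_4]$ with a one-dimensional solution space, fixing $\nu_{345}$ up to scale. Since $\deg K = 2g-2 = 6$, it follows that $\div(\nu_{345}) = 4W_{345} + P_{345} + P'_{345}$, where the residual degree-$2$ divisor is recovered by intersecting $H$ with $\mathcal{B}$ and removing $4W_{345}$. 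Carrying out this residual intersection is precisely what produces the quadratic in $x$ with roots $\delta,\delta'$ and hence the stated coordinates; note as a consistency check that the transposition $(12)$ fixes $W_{345}$, preserves $H$, and visibly interchanges $P_{345}\leftrightarrow P'_{345}$, since it swaps the first two coordinates $\delta\leftrightarrow\delta'$.

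For the function I would build it as a ratio of differentials. Applying the previous paragraph to $W_{145}$ and $W_{245}$ (the $(23)$- and $(13)$-fixed Weierstrass points sharing $x_4=\beta,\,x_5=\gamma$ with $W_{345}$) yields differentials with divisors $4W_{145}+P_{145}+P'_{145}$ and $4W_{245}+P_{245}+P'_{245}$. Setting $h=\nu/\nu_{345}$, where $\nu$ is a holomorphic differential with divisor $P_{345}+P'_{345}+P_{145}+P'_{145}+P_{245}+P'_{245}$, gives $\div(h) = P_{145}+P'_{145}+P_{245}+P'_{245}-4W_{345}$ as required. The existence of such a $\nu$ is equivalent to the six residual points being coplanar in the canonical embedding, i.e. to the linear equivalence $P_{145}+P'_{145}+P_{245}+P'_{245}\sim 4W_{345}$, which by combining the three differential relations is in turn equivalent to $K \sim 2(W_{145}+W_{245}+W_{345})$.

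The hard part will be establishing this last linear equivalence. The weight-one property alone gives $\ell(4W_{345})=2$ and hence that the degree-$4$ function has \emph{some} effective zero divisor $E\sim 4W_{345}$, but it does not by itself force $E = P_{145}+P'_{145}+P_{245}+P'_{245}$. I would verify the identification directly by computer algebra, either by exhibiting the hyperplane through the six residual points in the canonical model, or by mapping $P_{145}+P'_{145}+P_{245}+P'_{245}-4W_{345}$ into $\Jac\,\mathcal{B}$ via Abel--Jacobi and checking that it is trivial. The $(12)$-symmetry, under which $4W_{345}$ is fixed while the pair $\{W_{145},W_{245}\}$ (and correspondingly the divisor $P_{145}+P'_{145}+P_{245}+P'_{245}$) is preserved, makes the asserted equivalence natural and provides a final check on the computation.
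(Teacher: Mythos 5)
Your proof follows essentially the same route as the paper: the differential comes from the osculating (stall) hyperplane at $W_{345}$, whose residual intersection with the canonical curve yields $P_{345}+P'_{345}$, and the function is a ratio of differentials whose validity reduces to the linear equivalence $\mathcal{K}_{\mathcal{B}} \sim 2(W_{145}+W_{245}+W_{345})$. The only point of divergence is that this last equivalence --- the step you flag as the hard part and propose to verify by computer --- is precisely the tritangent plane $\Pi^{(1)}_{\alpha 45}$ of Edge \cite{Edge1981}, which the paper simply quotes (and records in \S\ref{subsec: tritangent planes} as the odd characteristic $T^{(1)}_{\alpha 45}$); note also that, strictly speaking, your reduction shows the coplanarity of the six residual points is \emph{implied by}, rather than equivalent to, $\mathcal{K}_{\mathcal{B}} \sim 2(W_{145}+W_{245}+W_{345})$ (the converse holds only up to $2$-torsion), but that is the direction you need.
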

\begin{proof}
Note that algorithmically it is possible to construct these through the work in \cite{Hess2002}, the tools for which are partially implemented in Sage but not with enough generality for us to use out-the-box. As such we need a different approach.

We know that for any Weierstrass point $W$ there is a hyperplane $H \subset \mathbb{P}^3$ intersecting the canonical embedding with multiplicity $g=4$ at $W$ (that is $H \cap \mathcal{B} = 4W + P + P^\prime$ for some $P, P^\prime \in \mathcal{B}$). Such a hyperplane gives a holomorphic differential $\nu_{ijk}$ with $(\nu_{ijk}) = 4W_{ijk} + P_{ijk} + P^\prime_{ijk}$. From \cite{Edge1978} we know that for Bring's curve the osculating plane\footnote{The \bam{osculating plane} at a point $P$ on a curve is the limiting plane through $P$, $P^\prime$, $P^{\prime \prime}$ as $P^\prime, P^{\prime \prime} \to P$ on the curve \cite{Salmon1862}.} at $W$ intersects 4 times, and so this is the plane we are looking for. As Edge gives a formula for the osculating plane (attributed to Hesse), we can explicitly calculate the remaining 2 intersections with the curve in terms of a polynomial roots. This gives us the first result for the divisor of the meromorphic differential.

Furthermore, from \cite{Edge1981} we have a tritangent plane which has intersection with Bring's curve
\[
2(W_{145} + W_{245} + W_{345})  . 
\]
We will discuss this plane (and others like it) more in \S\ref{subsec: tritangent planes}, but for now all we need is that this means there is a holomorphic differential $\omega_{45\alpha}^{(1)}$ on $\mathcal{B}$ with $(\omega_{45\alpha}^{(1)}) = 2(W_{145} + W_{245} + W_{345})$. As such we get the divisor of the meromorphic function 
\[
\pround{\frac{\nu_{145}\nu_{245}}{\pround{\omega_{45\alpha}^{(1)}}^2}} = P_{145} + P_{145}^\prime + P_{245} + P_{245}^\prime -4W_{345}  . 
\]
As we can calculate the formula for all these planes explicitly if we wish, we could (in principle) construct the corresponding function and differential. 
\end{proof}
\begin{remark}
We are able to verify the results in Proposition \ref{prop: function and divisor associated to WP} using the Abel-Jacobi map implemented in the Sage \cite{DisneyHogg2021}; see the notebooks. 
\end{remark}
\subsection{Vertices and Face-Centres}
Using Figure 2 of \cite{Braden2012} we are able to link the Petrie polygon to the R\&R model of the curve, and this gives us a concrete expression for the remaining geometric points. 
\begin{prop}\label{FCprop}
The face-centres are exactly the points fixed by an order-5 automorphism of the curve. They are given in the $\mathbb{P}^4$-model by the permutations of $[1:\zeta:\zeta^2:\zeta^3:\zeta^4]$. The corresponding points in the HC-model are the desingularizations of $V_k$, together with $a$, $b$, $c$, $d$, and 
\begin{enumerate}
    \item $[-2\zeta^3 - 2\zeta^2 : \zeta^3 + \zeta^2 - 1 : 1] = [\sqrt{5}+1 : -\sqrt{5}/2-3/2 : 1] = [2j : j^\prime-2: 1]$,
    \item $[-2\zeta^3 - 2 : -2\zeta^3 - \zeta - 1 : 1]$,
    \item $[-2\zeta^2 - 2\zeta : -\zeta^3 + \zeta + 1 : 1]$,
    \item $[2\zeta^3 + 2\zeta^2 + 2\zeta : \zeta^3 + 2\zeta^2 + 2\zeta + 1 : 1]$,
    \item $[-2\zeta^2 - 2 : \zeta^3 - \zeta^2 + \zeta : 1]$,
    \item $[2\zeta^3 + 2\zeta + 2 : \zeta^2 - \zeta + 1 : 1]$,
    \item $[-2\zeta^3 - 2\zeta : 2\zeta^3 + \zeta^2 + \zeta + 2 : 1]$
    \item $[2\zeta^3 + 2\zeta^2 + 2 : -\zeta^3 - \zeta^2 - 2 : 1] = [-\sqrt{5}+1 : \sqrt{5}/2-3/2:1] = [2j^\prime : j-2 : 1]$,
    \item $[2\zeta^2 + 2\zeta + 2 : -\zeta^3 - 2\zeta^2 - \zeta : 1]$,
    \item $[-2\zeta - 2 : -\zeta^2 - 2\zeta - 1 : 1]$,
\end{enumerate}
where $j, j^\prime$ are the roots identified by Dye mentioned in \S\ref{subsec: automorphism group}. 
\end{prop}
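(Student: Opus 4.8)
The plan is to split the statement into its three assertions — that the face-centres coincide with the order-$5$ fixed points, their form in the $\mathbb{P}^4$-model, and their translation into the HC-model — and treat each in turn. For the $\mathbb{P}^4$ part I would argue as follows. By the preceding Wiman proposition there is a unique $S_5$-orbit of size $24$, with point-stabiliser $C_5$; the identification of this orbit with the face-centres of the universal map $\{5,4\}_{6}$ is supplied by the Petrie-polygon description and Figure~2 of \cite{Braden2012}. Every order-$5$ element of $S_5$ is a $5$-cycle, and a $5$-cycle acts on $\mathbb{P}^4$ as a cyclic permutation matrix whose eigenvectors are $[1:\omega:\omega^2:\omega^3:\omega^4]$ with $\omega^5=1$. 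The eigenvalue-$1$ vector $[1:1:1:1:1]$ fails $H_1=0$, whereas for each primitive fifth root $\omega=\zeta^j$ one checks immediately that $H_k=\sum_{m=0}^{4}\omega^{mk}=0$ for $k=1,2,3$, these being complete sums of fifth roots of unity (as $jk\not\equiv 0 \pmod 5$). Hence each $C_5$ fixes exactly four points of $\mathcal{B}$; the stabiliser of $[1:\zeta:\zeta^2:\zeta^3:\zeta^4]$ is exactly $\langle(12345)\rangle$, so its orbit has size $120/5=24$, and the six subgroups $C_5$ account for precisely the $24$ projectively-distinct permutations of $[1:\zeta:\zeta^2:\zeta^3:\zeta^4]$ (cyclic shifts being identified on multiplying by powers of $\zeta$). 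This establishes the first two assertions.

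For the HC-model I would push each of these $24$ points through the inverse of the birational map of Proposition~\ref{prop: HC valid model for Bring}: apply $A^{-1}$ to pass to the $L$-coordinates and then evaluate $[X:Y:Z]=[L_2^2 - L_1 L_3 : L_1 L_4 : L_2 L_4]$. For the ten face-centres landing on smooth points of the plane model this is a direct substitution returning the ten listed points; one may cross-check that the two roots $j,j^\prime$ of $x^2-x-1$ appear exactly as recorded in items (1) and (8). The remaining fourteen face-centres map to loci where the rational inverse degenerates to $[0:0:0]$, namely the two smooth points $a,b$ and the two branches of each node $V_0,\dots,V_5$. At $a$ and $b$ one recovers the correspondence instead by evaluating the forward map $\Psi$ directly; at each node one must use the local expansions of Lemma~\ref{lemma: local expansion in Bring's curve} to decide which of the two branches is hit. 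The computation already recorded in the proof of Proposition~\ref{prop: HC valid model for Bring} — that the face-centres $[\zeta^3:1:\zeta^2:\zeta:\zeta^4]$ and $[1:\zeta^3:\zeta:\zeta^2:\zeta^4]$ correspond respectively to $c=[1:0:0]_2$ and $d=[1:0:0]_1$ — is the prototype, and the same analysis at $V_0,\dots,V_4$ completes the list. The bookkeeping $10+2+2+10=24$ (the ten branches over $V_0,\dots,V_4$; the points $a,b$; the pair $c,d$ over $V_5$; and the ten smooth points) confirms nothing is missed.

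The genuine obstacle is this last branch-matching at the nodes: because the birational map is an isomorphism only on the smooth locus, two distinct face-centres collapse to a single double point $V_k$ in $\mathbb{P}^2$, and naive evaluation of the inverse returns the indeterminate $[0:0:0]$ there. Resolving the ambiguity requires substituting the local parametrisations of Lemma~\ref{lemma: local expansion in Bring's curve} into $\Psi$ and taking limits, exactly as was done for $c$ and $d$, so that each branch is paired with the correct $\mathbb{P}^4$ face-centre. Everything else is direct, if tedious, linear algebra over $\mathbb{Q}[\zeta]$, readily verified with the accompanying computer-algebra notebooks.
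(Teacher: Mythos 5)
Your proposal is correct and follows the same basic strategy as the paper: identify the face-centres with the fixed points of order-$5$ automorphisms by a Riemann--Hurwitz/orbit count, then produce the explicit coordinates. The difference is one of emphasis. The paper's proof is two sentences long --- it notes that the order-$5$ rotation about a face-centre fixes it, invokes Riemann--Hurwitz to see that the $24$ face-centres exhaust the order-$5$ fixed points, and then simply \emph{verifies} in Sage that the listed expressions are fixed. You instead \emph{derive} the expressions: the $\mathbb{P}^4$ points as the non-trivial eigenvectors $[1:\omega:\omega^2:\omega^3:\omega^4]$ of the cyclic permutation matrices (with the clean observation that $H_k=\sum_m\omega^{mk}=0$ for $k=1,2,3$ precisely when $\omega\neq 1$), and the HC points by transporting these through the birational map of Proposition \ref{prop: HC valid model for Bring}. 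Your accounting $10+2+2+10=24$ is right, and you correctly isolate the one genuine subtlety of that route --- the inverse rational map is indeterminate at the images of $a$, $b$ and at the nodes $V_0,\dots,V_5$, so the branches must be matched by substituting the local parametrisations of Lemma \ref{lemma: local expansion in Bring's curve} into $\Psi$ and taking limits, exactly as the paper does for $c$ and $d$. Your version is more self-contained and constructive; the paper's is shorter because it checks fixedness directly in each model rather than transporting coordinates between them. One very minor point: your assertion that the stabiliser of $[1:\zeta:\zeta^2:\zeta^3:\zeta^4]$ is exactly $\langle(12345)\rangle$ is justified most cleanly by noting that the $24$ permutation points are projectively distinct (so the stabiliser has order exactly $5$), or by the cyclicity of point stabilisers; it is worth saying which you mean.
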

\begin{proof}
Certainly the order-5 rotation about a face-centre fixes that centre, and so a Riemann-Hurwitz counting argument gives us that the $24$ face-centres are the fixed points of order-5 automorphisms. It is a simple matter of
computation to verify the given expressions are fixed;
this may be done in Sage.
\end{proof}
The Galois group $\operatorname{Gal}(\mathbb{Q}[\zeta]/\mathbb{Q}) \cong C_4$ acts element-wise on the face-centres and the orbits partition the set of face-centres into 6 sets of 4. Each set of 4 face-centres form the vertices of a quadrilateral whose edges lie in $\pbrace{H_2=0}$ \cite{Edge1978}. Moreover, the faces of the dodecadodecahedron corresponding to the face-centres in each quadrilateral are parallel.

We have a similar result for the vertices. 
\begin{prop}\label{Vprop}
The vertices are exactly the points fixed by an order-4 automorphism of the curve. They are given in the $\mathbb{P}^4$-model by the permutations of $[1: i: -1: -i: 0]$.
\end{prop}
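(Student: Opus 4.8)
The plan is to follow exactly the template just used for the face-centres in Proposition \ref{FCprop}, since the two results are structurally identical: in both cases we are claiming that a particular orbit of geometric points coincides with the fixed-point set of a particular conjugacy class of automorphisms, and then exhibiting an explicit representative in the $\mathbb{P}^4$-model. First I would establish the counting: by the Riemann--Hurwitz calculation in the proof of the Wiman orbit-count proposition, the orbit of size $30$ is precisely the orbit with stabilizer $C_4$. An order-$4$ element of $S_5$ is a $4$-cycle, there are $30$ such elements arranged in conjugacy classes, and each one fixes points on $\mathcal{B}$; Riemann--Hurwitz forces the corresponding orbit (the vertices of the universal map $\pbrace{5,4}_6$, which we have already identified as the edge \ldots vertices) to have size exactly $\tfrac{120}{4}=30$. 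So the identification of the vertices with fixed points of order-$4$ automorphisms follows from the same bookkeeping, with no new input required.

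The remaining task is purely to verify the explicit coordinates. I would exhibit a specific $4$-cycle acting as a projective transformation on the $\mathbb{P}^4$-model via the permutation representation on $\sum_i x_i = 0$, and check that $[1:i:-1:-i:0]$ is fixed by it. The natural choice is the cyclic permutation $(1234)$, which sends $[x_1:x_2:x_3:x_4:x_5] \mapsto [x_4:x_1:x_2:x_3:x_5]$; applying this to $[1:i:-1:-i:0]$ gives $[-i:1:i:-1:0] = (-i)\cdot[1:i:-1:-i:0]$, so the point is indeed fixed (with eigenvalue $-i$, the expected primitive fourth root). One must also confirm that this point actually lies on $\mathcal{B}$, i.e.\ that it satisfies $H_1 = H_2 = H_3 = 0$: here $H_1 = 1+i-1-i+0 = 0$, $H_2 = 1 - 1 + 1 - 1 + 0 = 0$, and $H_3 = 1 + i^3 + (-1)^3 + (-i)^3 = 1 - i - 1 + i = 0$, so all three defining equations vanish. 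Taking the $S_5$-orbit (equivalently, the permutations of the coordinate entries) then produces all $30$ vertices, consistent with the stabilizer being the cyclic group generated by $(1234)$.

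The only genuine subtlety, and the step I expect to require the most care, is confirming that these $30$ points are \emph{exactly} the fixed points of the order-$4$ automorphisms and coincide with the geometric vertices, rather than merely being \emph{a} set of $30$ points with $C_4$-stabilizer. This is handled by the uniqueness already baked into the Wiman orbit count: there is a unique orbit of size $30$, so any $30$ points with $C_4$-stabilizer must be that orbit, and we have independently identified the vertices of the tessellation as the geometric points of that orbit size in \S\ref{sec: geometric points}. Thus, as with the face-centres, the proof reduces to the Riemann--Hurwitz uniqueness plus a direct computational verification in Sage that the stated point is fixed by a $4$-cycle and lies on the curve, after which transitivity of the $S_5$-action distributes the claim across the full orbit.
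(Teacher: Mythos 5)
Your proposal is correct and follows essentially the same route as the paper: the paper leaves Proposition \ref{Vprop} unproved precisely because it is the order-4 analogue of Proposition \ref{FCprop}, whose proof is the Riemann--Hurwitz/orbit-uniqueness count from Wiman's proposition plus a direct check that the stated point is fixed (and the paper carries out exactly your hand computation in \S\ref{subsec: genera of quotients}, where the fixed points of $(2345)$ are found to be $[0:1:\pm i:-1:\mp i]$). Your verification that $(1234)$ fixes $[1:i:-1:-i:0]$ with eigenvalue $-i$, that the point satisfies $H_1=H_2=H_3=0$, and that uniqueness of the size-30 orbit forces the fixed-point set to coincide with the vertices is exactly the intended argument.
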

We could use our birational map as above to give the vertices in HC coordinates, but these are not very illuminating. For example, that vertex $[1:i:-1:-i:0]$ maps to
\[
[X : Y : Z] = [(3\zeta^3 - \zeta^2 + 2\zeta + 1)i + 3\zeta^3 + 3\zeta^2 + 5 : (2\zeta^3 + 2\zeta + 1)i + 2\zeta^3 + 2\zeta^2 + 2 : 1]
\]
though $[1:-1:i:-i:0]$ maps to $[1: (-1+i)/2: (-1+i)/2]$.
\section{Quotients by Subgroups}\label{sec: quotients by subgroups}

Both \cite{Riera1992} and \cite{Weber2005} consider the quotient of $\mathcal{B}$ by the action of subgroups of $S_5$.
In this section we shall study the various quotients of Bring's curve of nonzero genus and the relations between them, both clarifying and extending previous work. In \S\ref{subsec: genera of quotients} we will use the Riemann-Hurwitz theorem to describe possible quotients.
In \S\ref{subsec: relations between quotients} we shall note the various relationships we expect between the quotients just on group theoretic grounds, while in \S\ref{subsec: quotients by 4 and 2-2 cycle}-\S\ref{subsec: quotients by transposition} we turn to their explicit construction. In so doing we discover a number of curious isomorphisms beyond those expected. In \S\ref{subsec: summarising quotients} we summarise our calculations and relate them to known isogeny results.  Throughout we will use the following: for any subgroup $H$ of the automorphisms $\Aut(\mathcal{C})$ of a curve
 $\mathcal{C}$,  $H\le \Aut(\mathcal{C})$,
then the normaliser $N_{\Aut(\mathcal{C})}(H)$ acts on the $H$-orbits
and $N_{\Aut(\mathcal{C})}(H)/H \le \Aut(\mathcal{C}/H)$; if $g\in N_{\Aut(\mathcal{C})}(H) $ we will denoted by $\overline{g}$ the $H$-coset of $N_{\Aut(\mathcal{C})}(H)$ containing this.
The quotient curves of this section are summarised in Figures \ref{eq: structure of quotient curves} and \ref{eq: tracking fixed points}.


\subsection{Genera of Quotients}\label{subsec: genera of quotients}
Our first step is to know the topology of the quotients we are going to find. To this end, we give the following result. 
\begin{prop}
The data of the quotients of $\mathcal{B}$ by subgroups $\pangle{\sigma} \leq S_5$ is summarised by the following table:
\begin{center}
\begin{tabular}{|c|c|c|c|c|c|c|}
    \hline 
$\sigma$ & Example & $\left\lvert{cl(\sigma)}\right\rvert$ & $\left\lvert{\operatorname{Fix(\sigma)}}\right\rvert$ & Fixed Points& $N_{S_5}(\pangle{\sigma})/\pangle{\sigma}$ & $g\pround{\mathcal{B}/\pangle{\sigma}}$ \\ 
\hline \hline 
    $(12)$ & - &10& 6 & Edges &$S_3$ &1\\ 
    &&&&&& \\
    $(123)$ & $R S$ &20 &0 &-&$V_4$ &2\\ 
    &&&&&& \\
    $(12)(34)$ & $R$ &15& 2 & Vertices&$V_4$&2\\
    &&&&&& \\
    $(1234)$ & $U$ &30& 2 &Vertices &$C_2$&1\\ 
    &&&&&& \\
    $(123)(45)$ & - &20& 0 &- &$C_2$&0\\  
    &&&&&&\\
    $(12345)$ & $S$ &24& 4 & Face-centres &$C_4$&0 \\ 
    \hline
\end{tabular}
\end{center}
Here $cl(\sigma)$ is the conjugacy class of $\sigma$, $\operatorname{Fix}(\sigma) = \{ P \in \mathcal{B} \, | \, \sigma(P) = P \}$. 
\end{prop}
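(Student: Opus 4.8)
The plan is to establish the six rows essentially column by column, leaving the genus to be read off from Riemann--Hurwitz once the fixed-point data is assembled. The conjugacy-class sizes are a routine count in $S_5$: there are $\binom{5}{2}=10$ transpositions, $2\binom{5}{3}=20$ three-cycles, $\tfrac12\binom{5}{2}\binom{3}{2}=15$ double transpositions, $3!\,\binom{5}{4}=30$ four-cycles, $2\binom{5}{3}=20$ elements of type $(3,2)$, and $4!=24$ five-cycles. For the normaliser column I would use that $g\in N_{S_5}(\pangle{\sigma})$ exactly when $g\sigma g^{-1}\in\pangle{\sigma}$, which for each of these cycle types forces $g$ to preserve the partition of $\{1,\dots,5\}$ into the support-orbits of $\sigma$; for the two involution rows the normaliser is simply the centraliser. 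This reduces every normaliser to a product of symmetric groups or to a Sylow normaliser: e.g.\ $N_{S_5}(\pangle{(123)})=S_{\{1,2,3\}}\times S_{\{4,5\}}$, so $N/\pangle{\sigma}\cong(S_3/C_3)\times S_2\cong V_4$; $N_{S_5}(\pangle{(1234)})$ is the dihedral group $D_4$, giving $D_4/C_4\cong C_2$; $N_{S_5}(\pangle{(12345)})$ is the Frobenius group of order $20$, giving $C_4$; and the involution rows come from the centralisers $C_2\times S_3$ and $D_4$, giving $S_3$ and $V_4$ respectively.

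The fixed-point data I would import wholesale from the geometric results already in hand: Proposition~\ref{FCprop} identifies the face-centres as the fixed loci of the order-$5$ elements, Proposition~\ref{Vprop} the vertices as those of the order-$4$ elements, and the analysis following Proposition~\ref{WPprop} shows the $60$ Weierstrass points are the edge-centres, each stabilised by a single transposition, so that each transposition fixes $60/10=6$ of them. The Wiman orbit count (only orbits of sizes $24$, $30$, $60$, with cyclic stabilisers $C_5$, $C_4$, $C_2$) then determines the full stabiliser of \emph{every} non-generic point, which is exactly what Riemann--Hurwitz needs. In particular no orbit has a stabiliser containing $C_3$, so a three-cycle fixes nothing; likewise $(123)(45)$ fixes nothing, since any fixed point would be fixed by its square $(132)$; and the double transposition $(12)(34)$, being the square of a four-cycle, fixes precisely the two vertices fixed by that four-cycle.

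With this in place the genera follow from Riemann--Hurwitz for $\mathcal{B}\to\mathcal{B}/\pangle{\sigma}$ of degree $n=|\sigma|$, using $2g_{\mathcal B}-2=6$. The rows $n=2,3,5$ are immediate: $6=2(2g'-2)+6$, $6=3(2g'-2)+0$, and $6=5(2g'-2)+4\cdot4$ give $g'=1,2,0$, while the double-transposition case $6=2(2g'-2)+2$ gives $g'=2$. I expect the one genuine obstacle to be the ramification bookkeeping for the composite orders $n=4,6$, where the ramification index at $P$ is the order of the \emph{full} stabiliser $\mathrm{Stab}_{\pangle{\sigma}}(P)$, so one must track the fixed loci of all proper powers of $\sigma$, not merely of $\sigma$ itself. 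For $n=4$ the key observation is that the fixed locus of $\sigma^2=(13)(24)$ coincides with that of $\sigma$ (both are the same two vertices), so the only ramification is the two totally ramified vertices and $6=4(2g'-2)+2\cdot3$ gives $g'=1$. For $n=6$ the only power with fixed points is the transposition $\sigma^3=(45)$, whose six edge-centres have stabiliser exactly $C_2$; here the cleanest route is to factor $\mathcal{B}/\pangle{\sigma}=\bigl(\mathcal{B}/\pangle{(123)}\bigr)/\pangle{\overline{(45)}}$ through the unramified genus-$2$ quotient by the fixed-point-free three-cycle and apply Riemann--Hurwitz to the residual involution $\overline{(45)}$, whose fixed points are exactly the images of those six edge-centres grouped into their $\pangle{(123)}$-orbits. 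This factorisation both isolates the delicate step and makes transparent how the transposition's fixed points descend, from which the genus of the final quotient is determined.
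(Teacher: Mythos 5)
Your approach is the paper's in outline --- conjugacy counts and normalisers by elementary group theory, fixed loci imported from the geometric-points results (face-centres for order $5$, vertices for order $4$, six Weierstrass points per transposition, nothing for $C_3$ by Wiman's orbit count), and Riemann--Hurwitz for the genera --- but you carry the ramification bookkeeping through all six rows, whereas the paper only works the $(24)(35)$ and $(2345)$ cases explicitly and leaves the rest to the reader. All of your group-theoretic and fixed-point data check out, and your observation that for a $4$-cycle $\operatorname{Fix}(\sigma^2)=\operatorname{Fix}(\sigma)$, so that the two vertices are totally ramified and nothing else ramifies, is exactly the point of the paper's worked example.

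The one place you stop short is the order-$6$ row, and it matters: you set up the factorisation $\mathcal{B}/\pangle{(123)(45)}=\bigl(\mathcal{B}/\pangle{(123)}\bigr)/\pangle{\overline{(45)}}$ but never state the resulting genus. Carried to its conclusion, your own computation gives genus $1$, not the $0$ claimed in the table: the six Weierstrass points fixed by $(45)$ fall into two free $\pangle{(123)}$-orbits, so $\overline{(45)}$ has exactly two fixed points on the genus-$2$ intermediate curve, and Riemann--Hurwitz gives $2=2(2g''-2)+2$, i.e.\ $g''=1$ (equivalently, $6=6(2g'-2)+6\cdot 1$ directly on $\mathcal{B}$). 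This is not a flaw in your method but an error in the stated table: genus $1$ is what the rest of the paper requires, since diagram (b) of \S\ref{subsec: relations between quotients} exhibits $\mathcal{B}/C_6$ as an elliptic curve $\mathcal{E}'''$, and \S\ref{subsec: quotients by transposition} identifies $\mathcal{B}/\pangle{(12),(345)}$ --- a conjugate $C_6$-quotient --- with the elliptic curve $\mathcal{E}_4$ of $j$-invariant $j(15\tau_0)$ via an unramified $3$-isogeny from $\mathcal{E}_2$. You should finish the computation, state the value, and flag the discrepancy rather than leaving the last number implicit.
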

\begin{proof}
As conjugate elements yield isomorphic quotients we need only to give one $\sigma$ per conjugacy class.  The first three columns follow from the group theory we have previously shown in \S\ref{subsec: automorphism group}, the fourth and fifth follow from \S\ref{sec: geometric points} and the sixth is elementary group theory. The final column remaining is then a Riemann-Hurwitz argument for genus, which will we demonstrate for the quotient by $(2345)$ as in \cite{Riera1992}.

For general $\sigma$, denoting by $\pi$ the projection $\mathcal{B} \to {\mathcal{B}}/{\pangle{\sigma}} := H$, Riemann-Hurwitz says 
\begin{align*}
g_\mathcal{B}-1 &= (\deg \pi )(g_H-1) + \frac{1}{2} B  , 
\end{align*}
where $B$ is the degree of ramification of $\pi$, which in the case of a quotient by a group action corresponds to the fixed point structure of $\sigma$.

Consider $(24)(35)$. A fixed point of $\mathcal{B}$ under this, given by projective coordinates $x_i,  i=1, \dots, 5$, must have 
\[
(x_1 , x_4 , x_5 , x_2 , x_3) = (\lambda x_1 , \lambda x_2 , \lambda x_3 , \lambda x_4 , \lambda x_5)  . 
\]
From this we can see $\lambda = \pm 1$. Taking $\lambda=1$ gives no solutions, but taking $\lambda=-1$ one finds the equations 
\[
x_1 = 0, \quad x_1^2 + 2x_2^2 + 2x_3^2 = 0, \quad x_1^3=0,
\]
which gives the 2 fixed points $[0:1:\pm i:-1:\mp i]$. Hence we have \begin{align*}
 4-1 = 2(g_H-1) + \frac{1}{2}(1+1)   
\Rightarrow  g_H = 2  . 
\end{align*}
Moving now to $(2 3 4 5)$, thinking about the possible branching structure we get from Riemann-Hurwitz 
\[
3 = 4(g_H-1) + \frac{1}{2} \psquare{\sum_{P \in \operatorname{Fix}(\sigma)} 3 + \sum_{P \in \operatorname{Fix}(\sigma^2)\setminus \operatorname{Fix}(\sigma)} 1}  . 
\]
Fixed points of $(2345)$ will correspond to points such that 
\[
(x_1 , x_3 , x_4 , x_5 , x_2) = (\lambda x_1 , \lambda x_2 , \lambda x_3 , \lambda x_4 , \lambda x_5)  , 
\]
and we get the constraint $\lambda^4=1$. The equations of the curve become 
\begin{align*}
    x_1 + x_2 (1+ \lambda + \lambda^2 + \lambda^3) &= 0  , \\
    x_1^2 + x_2^2 (1 + \lambda^2 + 1+ \lambda^2) &= 0  , \\
    x_1^3 + x_2^3(1 + \lambda^3 + \lambda^2 + \lambda) &= 0  , 
\end{align*}
and checking the possible cases one finds that the only fixed points of $(2345)$  are $[0:1:\pm i : -1: \mp i]$. These are also the only fixed points of $(24)(35) = (2345)^2$ and so the second sum vanishes, hence we find $g_H=1$.
\end{proof}
R\&R \cite{Riera1992} provides a nice visual interpretation of the quotient by a 4-cycle, namely think of a sphere with 4 handles attached around an equator, then $(2345)$ is the cycle rotating these handles to each other by a quarter turn about the axis through the center of the equator. The fixed points are then where this axis intersects the sphere. 
Edge \cite{Edge1978}, citing Wiman, says that Bring's curve ``is, in ten different ways, in $(2,1)$ correspondence with a plane curve of genus 1".
The 10 $(2,1)$ correspondences noted by Edge are exactly the 10 quotients by a transposition giving a $2:1$ map $\mathcal{B} \to \mathcal{E}$, where $\mathcal{E}$ is an elliptic curve. Such a map is called a bielliptic structure, and Bring's curve is the unique genus-4 curve to have 10 such structures \cite{Casnati2005}. This table also lets us reconstruct the results about the gonality of Bring's curve from \cite{Gromadzki2010}. 

\subsection{Relations between Quotients}\label{subsec: relations between quotients}
We now consider the various relationships we might expect between the quotient curves of Bring's curve.
Recall (again from Riemann-Hurwitz) that if $\mathcal{C}$ is a curve of genus $g\ge2$ and $H\le \Aut(\mathcal{C})$ is such that
$|H|>4(g-1)$, a so called \lq large automorphism group', then the
genus of the quotient curve $g_{\mathcal{C}/H}=0$. For Brings's curve
we are therefore interested in subgroups of $S_5$ of order less than or equal to $12$, and as we have seen the quotient by a 5-cycle leads to genus $0$ we may exclude subgroups containing such. The relevant
conjugacy classes of subgroups are then\footnote{Note that here we are using the GAP convention for the dihedral group that $|{D_n}|=n$. This is the opposite convention to Sage which uses $|{D_{n}}|=2n$.}
\begin{enumerate}[(a)]
    \item $\pangle{(12)(34)}\cong C_2$, $\pangle{(12),(34)}\cong V_4$, $\pangle{(1324)}\cong C_4$, $\pangle{(1324),(12)}\cong D_8$. Each of these groups $H$ have the same normaliser: $N_{S_5}(H)=\pangle{(1324),(12)}\cong D_{8}$.
    \item $\pangle{(12)}\cong C_2$, $\pangle{(345)}\cong C_3$, $\pangle{(345),(12)}\cong C_6$, $\pangle{(345), (34)}\cong S_3$,  $\pangle{(345),(12)(34)}\cong  S_3'$, $\pangle{(345),(12),(34)}\cong D_{12}\cong S_3\times C_2$. Each of these groups $H$ have the same normaliser: $N_{S_5}(H)=\pangle{(345),(12),(34)}\cong D_{12}$.
    
    \item $\pangle{(12)(34), (13)(24)}\cong V_4$, $\pangle{(234),(12)(34)}\cong A_4$. Each of these groups $H$ have the same normaliser: $N_{S_5}(\pangle{(234),(12)(34)}) = \pangle{(234), (12), (34)} \cong S_4$.
\end{enumerate}
The groupings here are such that if $H$, $H'$ are from the same item then they share the same normaliser $N=N_{S_5}(H)=N_{S_5}(H')$. In particular each of $H$, $H'$ and $HH'=H'H$ are normal in $N$ and we have commutativity of the following diagram of quotients
\begin{center}
\begin{tikzpicture}[baseline= (a).base]
\node[scale=1.2] (a) at (0,0){
\begin{tikzcd}
\mathcal{B}\arrow[r, "H"]\arrow[d, " H' "'] \arrow[rd, "HH' "]&\mathcal{B}/H\arrow[d, " HH'/H "] \\
\mathcal{B}/H'\arrow[r, " HH'/H'" ']  &\mathcal{B}/HH'
\end{tikzcd}
};
\end{tikzpicture}
\end{center}
where the arrows are labelled by the symmetry being quotiented; by an isomorphism theorem we have of course that $HH'/H\cong H'/(H\cap H')$
and $HH'/H' \cong H/(H\cap H')$. If, say $H\le H'$, then $HH'=H'$ and one
side of this diagram collapses.

Now a Riemann-Hurwitz calculation shows that the each of the quotients
by $\pangle{(12),(34)}$,  $\pangle{(1324),(12)}$, $\pangle{(345), (34)}$ and $\pangle{(345),(12),(34)}$ is of genus $0$ and so not being considered. Thus from the preceding discussion we have the following relations amongst quotients for the subgroups of (a), (b) and (c):
\begin{center}

\begin{tikzcd}
\mathcal{B}\arrow[d, " (12)(34) " '] \arrow[rd, "(1324) "] 
& \\
\mathcal{C}\arrow[r, ]  &\mathcal{E}\\
&(a)
\end{tikzcd}
\qquad
\begin{tikzcd}[column sep = large]
&\mathcal{B}\arrow[r, " (12)"]\arrow[d, "(345)  "' ]
\arrow[dl, "S_3'" ']\arrow[rd, " C_6 "]&\mathcal{E}'\arrow[d, " (345) " ] \\
\mathcal{E}''&\mathcal{C}^\prime \arrow[r, " (12) " '  ] \arrow[l, "\overline{(12)(34)}" ] &\mathcal{E}'''\\
&&(b)
\end{tikzcd}
\qquad
\begin{tikzcd}
&\mathcal{B}\arrow[d, " V_4" ']
\arrow[dl, "(12)(34)" ']\arrow[rd, " A_4 "]& \\
\mathcal{C} \arrow[r]&\mathcal{E}\sp{(iv)} \arrow[r, " \overline{(234)} " '  ]  &\mathcal{E}\sp{(v)}\\
&&(c)
\end{tikzcd}
\end{center}
Here  $\mathcal{C}, \mathcal{C}^\prime$ are genus 2 curves and $\mathcal{E}$,\ldots,$\mathcal{E}\sp{(v)}$ elliptic curves that we cannot yet specify purely on group theoretic grounds. We turn now to their specification and indeed we shall find some interesting identities between them, which will then be summarised in \S\ref{subsec: summarising quotients}.

\subsection{Quotients by a 4- and 2,2-cycle}\label{subsec: quotients by 4 and 2-2 cycle}
Armed with the knowledge of the genus of the quotients we expect we shall now write them explicitly. We will begin with the 2,2-cycle corresponding to $U^2$ with the 4-cycle $U$ arising in the discussion.  Recall that we have  
$$
U^2:[X,Y,Z]\rightarrow [X,Z,Y], \quad \psi(U^2)=(12)(34)  .
$$
The normaliser of $\pangle{(12)(34)}$ in $S_5$ is $N_{S_5}(\pangle{(12)(34)})=\pangle{(12), (1324)}\cong D_8$. As we remarked earlier, $N_{S_5}(\pangle{(12)(34)})/\pangle{(12)(34)}\cong V_4$ are symmetries which remain when we go to the quotient $\mathcal{B}/\pangle{U^2}$ and we expect the quotient genus-2 curve to have (at least) this $V_4$ symmetry group. One of these involutions, which we will later see to be $\overline{(12)} = \overline{(34)}$,\footnote{The $\pangle{(12)(34)}$ cosets of the normaliser are $\pbrace{e, (1 2)(3 4)}$, $\pbrace{(12), (34)}$, $\pbrace{(13)(24), (14)(23))}$, and $\pbrace{(1324), (1423)}$.} is the hyperelliptic involution of the quotient curve and so further quotienting by this symmetry gives $\mathbb{P}^1$. Quotienting by the other two involutions will yield elliptic curves, and we will complete this construction now, both from the perspective of the HC-model, and the $\mathbb{P}^4$-model.

Starting in the HC-model, in order to get the first quotient $\mathcal{B}/\pangle{U^2}$ we express our curve in terms of the invariants of $U^2$: $X$, $T:=Y+Z$, and $V:=YZ$. Then
\begin{align*}
 0&= X(Y^5+Z^5) + (XYZ)^2 - X^4YZ -2(YZ)^3 ,   \\  &
=X(T^5-5 T^3 V+5 T V^2)+ X^2 V^2 -X^4 V-2 V^3 . 
\end{align*}
In $\mathbb{P}\sp{1,1,2}$ this is our genus-$4$ curve. Setting\footnote{Equally one may set $X=1$ but we note that setting $V=1$ yields a genus 4 curve as here $V$ is not of weight $1$. 
} $T=1$ and viewing 
$$
0=X(1-5V+ 5V^2)+X^2 V^2 -X^4 V-2 V^3
$$
as the affine part of a projective curve we have (after a not-very-illuminating transformation, for which Maple was used) the hyperelliptic curve
\begin{equation}
\label{qc1} 
\mathcal{C}_1:\  B^2 =A^6 +4 A^5 +10 A^3 + 4 A +1  .
\end{equation}
This is the genus-2 curve of \cite{Riera1992} with automorphism group $V_4$. Calculating the Igusa-Clebsch invariants (which give the $\overline{\mathbb{Q}}$-isomorphism class of a curve) and searching the LMFDB \cite{LMFDB} shows that this is a model for the modular curve $X_0(50)$, which we verifying by directly calculating a model for $X_0(50)$ over $\mathbb{Q}$ using \cite{Shimura1995}. The substitutions $A=(2+A^\prime)/(2-A^\prime)$, $B=4  B'/(2-A^\prime)^3$ make the $V_4$ symmetry clearer, giving
$$
(B^\prime)^2 + \psquare{(A^\prime)^6-5 (A^\prime)^4 -40 (A^\prime)^2 -80} = 0  ,
$$
where we have the hyperelliptic involution $B'\rightarrow -B'$ and the map $A^\prime \rightarrow -A^\prime$ generating $V_4$. We shall now be explicit about how these work.

As previously mentioned, quotienting $\mathcal{C}_1$ by either of the two non-hyperelliptic involutions yields an elliptic curve, with each involution having two fixed points from Riemann-Hurwitz. Quotienting by $(B',A^\prime)\rightarrow (B',-A^\prime)$ by introducing $A^{\prime \prime} = -(A^\prime)^2$ yields
\begin{equation}
\label{qe1} 
\mathcal{E}_1: (B')^2=(A^{\prime \prime})^3-5 (A^{\prime \prime})^2 -40 (A^{\prime \prime}) -80\ \text{ with $j$-invariant } \ j_{\mathcal{E}_1 }=-\frac{5\times 29^3}{2^5} = j(\tau_0).
\end{equation}
The fixed points are $(A^\prime, B^\prime)=(0, \pm\sqrt{-80})$, corresponding to the two points $(X,V)=([1\pm \sqrt{5}]/2, -1\pm \sqrt{5}/2)$, which are the images of the four vertices $[1: \pm i : \mp i : -1 : 0]$ and $[1: \pm i : -1 \mp i : 0]$ respectively, depending on sign. We recognise $\mathcal{E}_1$ to be the elliptic curve $E_1$ in \cite{Riera1992}.

We may also quotient $\mathcal{C}_1$ by
$(B', A^\prime)\rightarrow (-B', -A^\prime)$. To do this write $\mathcal{C}_1$ as 
$$
(B')^2 C^4 + \psquare{(A^\prime)^6-5 (A^\prime)^4 C^2 -40 (A^\prime)^2 C^4 -80 C^6} = 0  , 
$$
from where we see the same automorphism acts as $C \to -C$. Quotienting by this action by introducing $C^\prime = -C^2$ yields 
$$
(B')^2 (C^\prime)^2 =(A^\prime)^6-5 (A^\prime)^4 (C^\prime) -40 (A^\prime)^2 (C^\prime)^2 -80 (C^\prime)^3  . 
$$
Setting $A^\prime=1$ and taking $B^{\prime \prime} =  B^\prime C^\prime$ gives the standard elliptic form
\begin{equation}
\label{qe2} 
\mathcal{E}_2: (B^{\prime \prime})^2 =  1-5  (C^\prime) -40  (C^\prime)^2 -80 (C^\prime)^3 , \quad j_{\mathcal{E}_2 }=-\frac{25}{2} = j(5\tau_0). 
\end{equation}
The fixed point of this involution is $[B^\prime : A^\prime : C] = [1:0:0]$; this is a singular point where the desingularization corresponds to the 2 points at infinity, or correspondingly $(X,V)=(\mp i/2-1/2, 1/4)$, which are the images of the two vertices $[1:-1: \pm i : \mp i : 0]$. We recognise $\mathcal{E}_2$ to be $E_2$ in \cite{Riera1992}.\footnote{There is a typo in the constant term of R\&R's $E_2$ if it is to have the stated $j$-invariant.}

Next let us quotient the $\mathbb{P}^4$-model directly by the action of $(12)(34)$ and compare with these quotients just obtained from the HC-model.
To this end we introduce semi-invariants of the action of $(1324) = \psi(U)$, defined by 
\[
(s_1, s_2, s_3, s_4)^T = \begin{pmatrix}
    1 & 1 & 1 & 1 \\ 1 & -1 & i & -i \\ 1 & 1 & -1 & -1 \\ 1 & -1 & -i & i
    \end{pmatrix} (x_1, x_2, x_3, x_4)^T  . 
\]
These are constructed such that $U \cdot s_j = i^{j-1} s_j$, and so we have $U^2$-invariants $[s:t:u:v] := [s_1: s_2 s_4 : s_3 : s_4^2] \in \mathbb{P}^{1,2,1,2}$.\footnote{Note one could have instead taken $v=s_2^2$ for the last invariant, but the choice above happens to be better around the vertices that are the fixed points of the 4-cycle. Moreover, recognise that the invariants taken are defined over the field extension $\mathbb{Q}[i]$ and not $\mathbb{Q}$.} In terms of these invariants we have 
\begin{align*}
    H_2 &= \frac{1}{4} \pround{5s^2 + u^2 + 2t}  , \\
    H_3 &= \frac{3}{16} \pround{-5 s^3 + su^2 + \frac{t^2u}{v} + 2st + uv}  .  
\end{align*}
Eliminating $t$ from these equations, and setting $s=1$, we can use Maple to get the genus-2 curve in Weierstrass form
\begin{equation}
\label{qc1p} 
\mathcal{C}_1:\  y^2 = 100-25x^2-10x^4-x^6
\end{equation}
where $x=u$ and $y=-10+2uv$. The roots of the sextic here are a M\"obius transform of the curve $\mathcal{C}_1$ previously given (namely $x \mapsto \sqrt{20}/x$), so these curves are the same over $\mathbb{Q}[\sqrt{5}]$. Note the reason we see $\mathbb{Q}[\sqrt{5}]$ here is that it is the degree-2 subfield of $\mathbb{Q}[\zeta]$, the field required to have equivalence of the Hulek-Craig and $\mathbb{P}^4$-models of Bring's curve. We now aim to identify the $V_4$ of this curve described earlier   with the quotient of the normaliser  $\pangle{(12), (1324)}/\pangle{(12)(34)}$. One can check that $(12) : [s:t:u:v] \to [s:t:u:t^2/v]$. This fixes $x$, and so must be the automorphism $y \to -y$; that is $\overline{(12)} = \overline{(34)}$ is the hyperelliptic involution of
$\mathcal{C}_1$. Indeed one can check 
\begin{align*}
    y = -10 + 2uv &\mapsto -10 + 2\frac{t^2u}{v}  , \\
    &= -10 - 2\pround{-5 + u^2 + 2t + uv}  , \\
    &= -10 - 2\pround{-10 + uv + (5 + u^2 + 2t)}  , \\
    &= 10-2uv = -y  . 
\end{align*}
Likewise, $(1324) : [s:t:u:v] \to [s:t:-u:-v]$, and so leads to the automorphism $(x,y) \to (-x,y)$. Now if $\mathsf{x}=x^2$, the elliptic curve
$y^2 = 100-25\mathsf{x}-10\mathsf{x}^2-\mathsf{x}^3$ has $j$-invariant
$-25/2$ and so we may identify $\mathcal{E}_2 \cong \mathcal{B}/\pangle{(12)(34), (1324)} = \mathcal{B}/\pangle{(1324)}$.
Similarly 
$(13)(24) : [s:t:u:v] \to [s:t:-u:-t^2/v]$ leads to $(x,y) \to (-x,-y)$
and
we may identify $\mathcal{E}_1 \cong \mathcal{B} / \pangle{(12)(34), (13)(24)}$. Note when we compare with \cite{Riera1992}, we differ from R\&R in the identification of which quotient is being taken and their ascribing of $\tau_0$ and $5\tau_0$. Our results agree with those of \cite{Weber2005}, wherein the author describes an order-4 rotation (which he calls $\phi$, but we shall call $\tilde{U}$), and calculates the quotient by its action $T=\mathcal{B}/\pangle{\tilde{U}}$ to be
$$
T:\ y^2=4 x^3-75 x -1475,\quad j_T=-\frac{25}{2} = j(5 \tau_0)  .
$$ 
Note that our strategy of semi-invariants can be used directly to calculate the quotient by the 4-cycle $(1324)$, something we were unable to do in the HC-model because of the non-linearity of the automorphism $U$. To do so introduce new variables invariant under $(1324)$ (and so necessarily under $(12)(34)$) given by $u^\prime = uv$, $v^\prime = v^2$. These let us rewrite the defining equations of Bring's curve as 
\begin{align*}
    H_2 &= \frac{1}{4} \pround{5s^2 + \frac{(u^\prime)^2}{v^\prime} + 2t}  , \\
    H_3 &= \frac{3}{16} \pround{-5 s^3 + \frac{s(u^\prime)^2}{v^\prime} + \frac{t^2u^\prime}{v^\prime} + 2st + u^\prime}  ,  
\end{align*}
and we can apply Maple to find a Weierstrass form of the resulting elliptic curve. This is exactly the process of quotienting by $(x,y) \mapsto (-x,-y)$ as above, but in a different language.

Furthermore, from our previous investigation using group theory, we know that the quotient $\mathcal{B}/V_4$, where this $V_4$ is the one containing only 2,2-cycles, can further be quotiented by $(234)$ to give $\mathcal{B}/A_4$. Rather than attempt to construct this quotient using the invariants previously calculated on the 2,2-quotient, we step back and recall that the invariant ring $\mathbb{Q}[x_1, \dots, x_n]^{A_n}$ is generated by the symmetric polynomials $s_k = \sum_{i=1}^n x_i^k$ for $k=1, \dots, n$ and the Vandermonde polynomial $V = \prod_{1 \leq i < j \leq n} (x_j - x_i)$. As generators of the invariant algebra we know that there must be a relation between $V^2$ and the other generators as $V^2$ is an $S_n$-invariant, and the $S_n$-invariant algebra is generated by the $s_k$. Taking $n=4$ in the case of Bring's curve, the relations imposed from the curve are $s_2 + s_1^2 = 0 = s_3 - s_1^3$, and this gives for the additional relation, 
\begin{equation}\label{ref: quotient by 3cycle}
4 s_4^3 - \frac{373}{16} s_1^4 s_4^2 + \frac{431}{8} s_1^8 s_4 - \frac{701}{16} s_1^{12} + V^2 = 0 . 
\end{equation}
Setting $s_1=1$ (as we may do at all points on the quotient except those points coming from the vertices on Bring's curve) we see this is clearly an elliptic curve $\mathcal{E}_3$ with hyperelliptic involution $V \to -V$, and for which we can calculate the $j$-invariant to be $\frac{211^3 \times 5}{2^{15}} = j(3 \tau_0)$.\footnote{This is the curve 15-isogenous to $\mathcal{E}_2$ noted by Serre, see \S\ref{subsec: period matrix}.} Further quotienting by the hyperelliptic involution then corresponds to the quotient $\mathcal{B}/S_4$, which is $\mathbb{P}^1$ as expected, being the quotient by a large automorphism group.  

\subsection{Quotients by a 3-cycle}\label{subsec: quotient by 3-cycle}
We may utilise the same methods illustrated above for the 2,2-cycles to calculate the quotient of Bring's curve by a 3-cycle. We will work with the 3-cycle $(345)$ for purely aesthetic resaons. 
Now the normaliser of $\pangle{(345)}$ in $S_5$ is $N_{S_5}(\pangle{345}))=\pangle{(12), (34), (345)}\cong D_{12}$ and
we expect the quotient genus-2 curve $\mathcal{B}/\pangle{(345)}$ to have $D_{12}/C_3 \cong V_4$ symmetry group.
Again one of these involutions, which we will later see to be $\overline{(34)}$,\footnote{The cosets of $\pangle{(345)}$ in the normaliser are now $\pbrace{e, (345), (3 5 4)}$,
 $\pbrace{(12), (12)(345), (12)(3 5 4)}$,
 $\pbrace{(34), (45), (35))}$, and 
$\pbrace{(12)(34), (12)(45), (12)(35))}$.} is the hyperelliptic involution on the curve and so further quotienting by this symmetry gives $\mathbb{P}^1$. Quotienting by the other two involutions again yields elliptic curves we shall now describe. As the quotients of both the HC-model and $\mathbb{P}\sp4$-model via semi-invariants proceed analogously we shall present here only the $\mathbb{P}\sp4$-model calculations.

Letting $\rho$ be a primitive cube-root, we take semi-invariants of the action of $(345)$
\[
(s_1, s_2, s_3, s_4)^T = \begin{pmatrix}
    1 & 0 & 0 & 0 \\ 0 & 1 & 1 & 1 \\ 0 & 1 & \rho & \rho^2 \\ 0 & 1 & \rho^2 & \rho
    \end{pmatrix} (x_2, x_3, x_4, x_5)^T  . 
\]
We correspondingly take invariants $[s:t:u:v] = [s_1 : s_2 : s_3 s_4 : s_3^3] \in \mathbb{P}^{1,1,2,3}$.\footnote{Note in this case that the invariant $v$ is not defined over $\mathbb{Q}$, but over the cyclotomic extension $\mathbb{Q}[\rho]$.} In terms of these variables we have 
\begin{align*}
    H_2 &= \frac{2}{3} \pround{3s^2 + 3st + 2t^2 + u}  , \\
    H_3 &= \frac{1}{9} \pround{-27s^2t - 27st^2 - 8t^3 + v + 6tu + \frac{u^3}{v}}  .
\end{align*}
Eliminating $u$ from these and setting $s=1$, we can use Maple to get the genus-2 curve in Weierstrass form 
\begin{equation}
\label{qc2} 
\mathcal{C}_2 :  y^2 = 108\pround{4+12x+95x^2+170x^3+155x^4+72x^5+16x^6}
\end{equation}
where $x=t$ and $y=-90t-90t^2-40t^3+4v$. Examining the roots of the sextic confirms that $\mathcal{C}_2$ is a genuinely distinct genus-2 curve from $\mathcal{C}_1$. Indeed, this curve does not currently exist in the LMFDB\footnote{This is to be expected, as it does not satisfy the conditions to be included in curves investigated in \cite{Booker2016}}, but one can check that over $\mathbb{Q}[\sqrt{5}]$ it is isomorphic to the curve 2500.a.400000.1 given by $y^2 = -7x^6 - 8x^5 + 10x^3 - 8x - 7$. Moreover, introducing $x^\prime, y^\prime$ by $x = {-2}/{(1+x^\prime)}$, $y = {y^\prime}/{(1+x^\prime)^3}$, yields
\[
(y^\prime)^2 = 432\psquare{(x^\prime)^6 + 80(x^\prime)^4 + 125(x^\prime)^2 + 50}  . 
\]
This makes the $V_4$ symmetry evident, being generated by $x^\prime \to -x^\prime$ and $y^\prime \to -y^\prime$. The elliptic curve obtained from quotienting by the action $x^\prime \to -x^\prime$ is 
\begin{equation}
\label{qe3} 
\mathcal{E}_4 :  (y^\prime)^2 = 432\psquare{(x^{\prime\prime})^3 + 80(x^{\prime\prime})^2 + 125x^{\prime\prime}+ 50},  \quad j_{\mathcal{E}_4 }=-\frac{5^2\times 241^3}{2^3} = j(15\tau_0)  . 
\end{equation}
To our knowledge this elliptic curve has not been previously noted in discussions of Bring's curve. The elliptic curve obtained from quotienting by the action $(x^\prime, y^\prime) \to (-x^\prime, -y^\prime)$ is the previously seen
\[
\mathcal{E}_1 :  (y^{\prime \prime})^2 = 432\psquare{1 + 80 z^\prime + 125(z^\prime)^2 + 50(z^\prime)^3}  , \quad j_{\mathcal{E}_1} = -\frac{5 \times 29^3}{2^5} = j(\tau_0)  .
\]

We now wish to identify the quotient $\pangle{(12), (34), (345)}/\pangle{(345)}\cong V_4$
with the  $V_4$ just described. It requires a little effort to see $(12) : [s : t : u : v] \to [-s - t : t : u: v]$, which corresponds to $x^\prime \to -x^\prime$, and that $(34) : [s : t : u : v] \to [s : t : u: u^3/v]$, which fixes $x^\prime$ and so must be the map $y^\prime \to -y^\prime$. This means that the remaining involution $(x^\prime, y^\prime) \to (-x^\prime, -y^\prime)$ comes from the group element $(12)(34)$. As such we identify $\mathcal{E}_4 \cong \mathcal{B} / \pangle{(12), (345)}$, and we have a curious isomorphism, namely 
$$
\mathcal{B} / \pangle{(12)(34), (345)} \cong \mathcal{B} / \pangle{(12)(34), (13)(24)} \cong \mathcal{E}_1
\text{ corresponding to}\ 
\begin{tikzcd}
\mathcal{B}\arrow[r, "(345)"]\arrow[d, "(12)(34)"] &\mathcal{C}_2\arrow[d, "\overline{(12)(34)}"] \\
\mathcal{C}_1\arrow[r, "\overline{(13)(24)}" ']  &\mathcal{E}_1
\end{tikzcd}.
$$

As before we can also consider distinguished points on the quotients coming from fixed points. Whereas $(345)$ has no fixed points when acting on $\mathcal{B}$ each involution on $\mathcal{C}_2$ that gives a quotient to an elliptic curve has 2 fixed points. The fixed points of $\overline{(12)}$ are the orbits under $(345)$ of the six fixed points of $(12)$ that are Weierstrass points. The fixed points of $\overline{(12)(34)}$ are the orbits under $(345)$ of the $3 \times 2$ fixed points of $(12)(34)$, $(12)(45)$, and $(12)(35)$ that are vertices.

\subsection{Quotients by a transposition}\label{subsec: quotients by transposition}
The strategy of quotienting from the $\mathbb{P}^4$-model using semi-invariants from the previous sections can also be used to calculate the quotient of Bring's curve by a transposition. We take semi-invariants
\[
(s_1, s_2, s_3, s_4)^T = \begin{pmatrix}
    1 & 1 & 0 & 0 \\ 1 & -1 & 0 & 0 \\ 0 & 0 & 1 & 1 \\ 0 & 0 & 1 & -1
    \end{pmatrix} (x_1, x_2, x_3, x_4)^T  , 
\]
and in terms of these variables the defining equations of Bring's curve become 
\begin{align*}
H_2&=  \frac12( s_1^2+s_2^2)+ s_3^2+s_4^2+ (s_1+s_3+s_4)^2  , \\
H_3&=  \frac14( s_1^3+3 s_1 s_2^2)+ s_3^3+s_4^3 -(s_1+s_3+s_4)^3  .    
\end{align*}
Taking invariants $[s:t:u:v] = [s_1 : s_2^2 : s_3 : s_4]$ and eliminating $t$ yields the elliptic curve
\begin{equation}
\label{qe2p} 
\mathcal{E}_2: \ 4s^3 + 8s^2u + 7s u^2 + u^3 + s v^2 - u v^2=0, \quad j_{\mathcal{E}_2}=-\frac{25}{2} = j(5 \tau_0).
\end{equation}
We observe that (also curiously) the quotient of $\mathcal{B}$ by a transposition is isomorphic to the quotient by a 4-cycle. It is not clear that there is any apriori reason for this to be the case. 
We are however able to relate the elliptic curves $\mathcal{E}_2$ and $\mathcal{E}_4$ as follows.

The normaliser of $\pangle{(12)}$ in $S_5$ is $N_{S_5}(\pangle{(12)})=\pangle{(12),(34),(345)}\cong D_{12}$ where now
$N_{S_5}(\pangle{(12)})/\pangle{(12)}\cong S_3$.\footnote{The cosets of $\pangle{(12)}$ in the normaliser are $\pbrace{e, (12)}$,
$\pbrace{(345), (12)(345)}$,
$\pbrace{(354), (12)(354)}$,
$\pbrace{(34), (12)(34)}$,
$\pbrace{(35), (12)(35)}$, and $\pbrace{(45), (12)(45)}$. }
We may immediately identify the action of $\overline{(34)}$ as the hyperelliptic involution on $\mathcal{E}_2$ as it acts as $v \to -v$, but we also retain another group action, that of $\overline{(345)}$.
To understand this action recall that there are six fixed points of the action of $(12)$ on $\mathcal{B}$, which are all Weierstrass points. This gives six more distinguished points on the curve $\mathcal{E}_2$ in addition to the two images of the vertices we have seen previously.  
The six Weierstrass points fixed by $(12)$ break into two orbits of three
under $(345)$
which we denote by $\{H_i\}$, $\{H_i'\}$ ($i=1,2,3$). 
Just as $(345)$ gives Bring's curve as an unramified cover of the genus $2$ curve $\mathcal{C}_2$, quotienting each by a transposition has $\overline{(345)}$  yielding an unramified automorphism of the quotient curves. Hence there exists a quotient from $\mathcal{E}_2=\mathcal{B}/\langle(12)\rangle$ to another elliptic curve
$\mathcal{E}_4 = \mathcal{B}/\pangle{(12),(345)}$ such that the following diagram commutes,
\begin{center}
\begin{tikzcd}
\mathcal{B}\arrow[r, "(345)"]\arrow[d, "(12)"] &\mathcal{C}_2\arrow[d, "\overline{(12)}"] \\
\mathcal{E}_2\arrow[r, "\overline{(345)}" ']  &\mathcal{E}_4
\end{tikzcd}
\end{center}
To view this action on the elliptic curve $\mathcal{E}_2$, we calculate that $(345): [s:t:u:v] \to [s:t: -s-u/2-v/2 : s+3u/2-v/2]$. In the $s=1$ affine chart $4 + 8u + 7 u^2 + u^3 + (1 - u) v^2=0$ we can take as a cohomology basis 
\[
{\eta:=\frac{du}{2v(1-u)}} ,
\]
and it is a simple algebraic calculation to see that this differential is invariant under the action of $\overline{(345)}$. If $\{\overline{H}_i\}$,
$\{\overline{H'}_i\}$
are the images of the Weierstrass points on $\mathcal{E}_2$ such that
${(345)}({H}_i)={H}_{i+1}$, $\overline{(345)}(\overline{H}_i)=\overline{H}_{i+1}$ and so forth, the invariance of $\eta$ tells us that $$\int_{\overline{H}_1}\sp{\overline{H}_{2}}\eta=
\int_{\overline{H}_2}\sp{\overline{H}_{3}}\eta=\ldots,\quad
\int_{\overline{H'}_1}\sp{\overline{H'}_{2}}\eta=
\int_{\overline{H'}_2}\sp{\overline{H'}_{3}}\eta=\ldots,\quad
\int_{\overline{H}_1}\sp{\overline{H'}_{1}}\eta=
\int_{\overline{H}_2}\sp{\overline{H'}_{2}}\eta=\ldots
$$
This means that if $\Lambda=\pangle{1, 5 \tau_0}$ is the period lattice of the elliptic curve
$\mathcal{E}_2$ then $\int_{\overline{H}_i}\sp{\overline{H}_{i+1}}\eta$
and
 $\int_{\overline{H'}_i}\sp{\overline{H'}_{i+1}}\eta$
are the same fixed element of $\Lambda/3$. By choosing an appropriate basis we
may take $\overline{(345)} : z \to z+1/3$ for $z\in\mathbb{C}/\pangle{1, 5 \tau_0}$. Then
the quotient map will be a 3:1 isogeny of elliptic curves and we 
have the period of $\mathcal{E}_4$ to be $15 \tau_0$ and $j_{\mathcal{E}_4} = j(15 \tau_0)$.
Under this isogeny the six images on $\mathcal{E}_2$ of the Weierstrass points $[1:1:\alpha:\beta:\gamma]$ are mapped to two on $\mathcal{E}_4$.
Indeed, one may construct the isogeny explicitly using the Weierstrass-$\wp$ function and the Abel-Jacobi map via the diagram 
\begin{center}
	\begin{tikzcd}
    \mathcal{E}_2 \arrow[r] \arrow[d,"P \mapsto \int_\infty^P \eta"'] & \mathcal{E}_4 \\
    J(\mathcal{E}_2) \arrow[r, "z \mapsto 3z"'] & J(\mathcal{E}_4) \arrow[u, "z \mapsto (\wp(z) {,}\wp^\prime(z))"']
	\end{tikzcd}
\end{center} 
where $J(\mathcal{E})$ represents the Jacobian of the elliptic curve viewed at $\mathbb{C}/\pangle{1,\tau}$.

\begin{remark}
This calculation may be verified numerically, as shown in the notebooks.
\end{remark}

\begin{remark}
Note that when we call $\overline{(345)}$ an automorphism of the elliptic curve, we have to think about it only in the category of projective varieties with morphisms given by rational maps. To consider the group structure on an elliptic curve $E$ we need to also give a base point $O \in E$ which acts as the additive identity. A morphisms from pair $(E, O) \to (E^\prime, O^\prime)$ must map $O \to O^\prime$. If we denote the group of automorphisms of $E$ as a projective variety as $\Aut(E)$, and the group of $O$-fixing automorphisms as $\Aut_O(E)$, then we have short exact sequence 
\[
0 \to T_E \to \Aut(E) \to \Aut_O(E) \to 0  ,
\]
where $T_E$ is the group of translations of $E$. It is a classical theorem that $\Aut_O(E)\in\{C_2, C_4, C_6\}$ (here we are over $\mathbb{C}$). 

Here $\overline{(345)}$ is indeed a translation. To see this take variables $x,y$ defined by 
\begin{align*}
    u &= \frac{-70s^3 + 6sx}{2(25s^2+3x)},\quad
    v = \frac{-3y}{25s^2 + 3x} ,
\end{align*}
which in the affine patch where $s=1$ give $\mathcal{E}_2$ as the curve $x^3-25/3 x+2950/27+y^2 = 0$. In these coordinates $(345)$ acts as 
\begin{align*}
    x &\mapsto  \frac{5(275 - 3x + 15y)}{3(65 + 15x - 3y)},\quad
    y \mapsto \frac{20(55 - 15x - 3y)}{(65 + 15x - 3y)} .
\end{align*}
Indeed for the projective coordinates $[X:Y:Z]$ of our curve
$X^3-25/3 X Z^2 +2950/27 Z^3+Y^2 Z = 0$ we have under $\overline{(345)}$ that
\begin{align*}
[X:Y:Z]&\mapsto [{5(275Z/3 - X + 5Y)}:20(55Z - 15X - 3Y):65Z + 15X - 3Y ]\\
&\mapsto [275Z/3 - X - 5Y: -220Z + 60X - 12Y: 13Z + 3X + (3Y)/5  ] \mapsto [X:Y:Z].
\end{align*}
When working with a Weierstrass model of an elliptic curve it is standard to take the distinguished point to be $\infty=[0:1:0]$ and we see in terms of $(x,y)$ that  $\infty\mapsto (-25/3, 20)\mapsto (-25/3, -20)\mapsto \infty$. 
\end{remark}

\subsection{Summarising}\label{subsec: summarising quotients}
We can collect the information of the quotients we have seen into the following diagram. Solid arrows represent a covering map coming from a quotient, whereas `squiggly' arrows indicate isomorphisms that are unexplained by group theory alone. 

\begin{center}
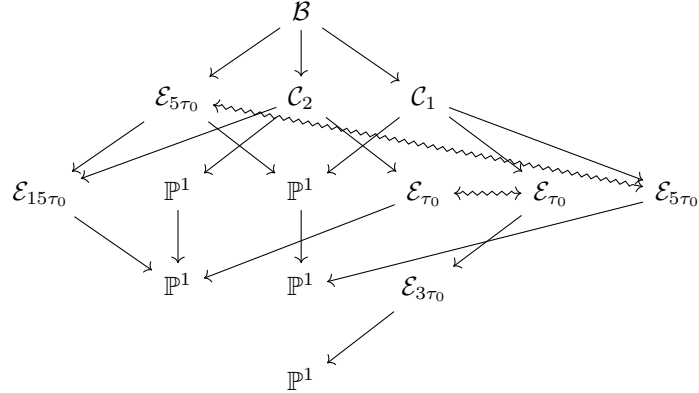
\begin{figure}[H]
    \begin{tikzcd}
        & & \mathcal{B} \arrow[dl] \arrow[d] \arrow[dr] & & & \\
        & \mathcal{E}_{5 \tau_0} \arrow[dl] \arrow[dr] \arrow[drrrr, leftrightsquigarrow] & \mathcal{C}_2 \arrow[dll] \arrow[dl] \arrow[dr] & \mathcal{C}_1 \arrow[dl] \arrow[dr] \arrow[drr] & & \\
        \mathcal{E}_{15 \tau_0} \arrow[dr] & \mathbb{P}^1 \arrow[d] & \mathbb{P}^1 \arrow[d] & \mathcal{E}_{\tau_0} \arrow[dll] \arrow[r, leftrightsquigarrow] & \mathcal{E}_{\tau_0} \arrow[dl] & \mathcal{E}_{5 \tau_0} \arrow[dlll] \\
        & \mathbb{P}^1 & \mathbb{P}^1 & \mathcal{E}_{3 \tau_0} \arrow[dl] & & \\
        & & \mathbb{P}^1 & & &
    \end{tikzcd}
    \caption{Quotient structure of Bring's curve.}
    \label{eq: structure of quotient curves}
\end{figure}
\end{center}

This corresponds to the quotients by the following groups, where an arrow now indicates that a source group is normal in the target, whereas a dashed line just indicates that a group is a subgroup. The label used corresponds to the list in \S\ref{subsec: relations between quotients}, except where that label is ambiguous and the exact group must be specified.

\begin{center}
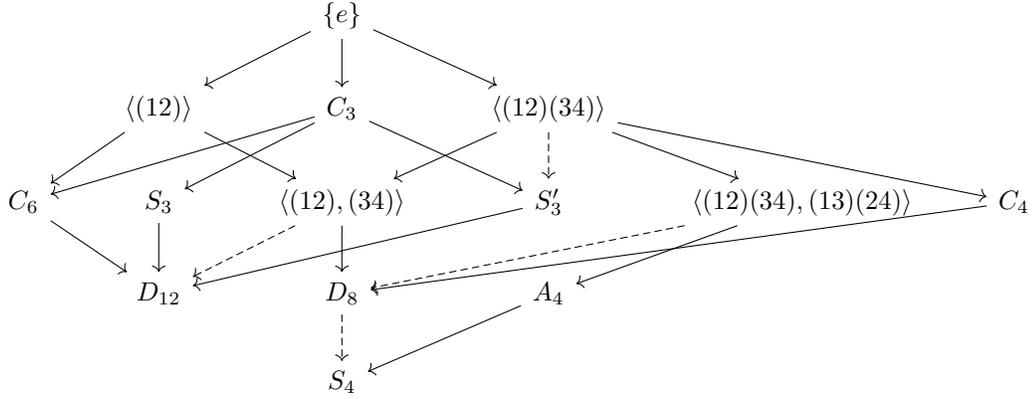
\begin{figure}[H]
    \begin{tikzcd}
        & & \pbrace{e} \arrow[dl] \arrow[d] \arrow[dr] & & & \\
        & \pangle{(12)} \arrow[dl] \arrow[dr] & C_3 \arrow[dll] \arrow[dl] \arrow[dr] & \pangle{(12)(34)} \arrow[dl] \arrow[d, dashed] \arrow[dr] \arrow[drr] & & \\
        C_6 \arrow[dr] & S_3 \arrow[d] & \pangle{(12),(34)} \arrow[dl, dashed] \arrow[d] & S_3^\prime \arrow[dll] & \pangle{(12)(34), (13)(24)} \arrow[dll, dashed] \arrow[dl] & C_4 \arrow[dlll] \\
        & D_{12} & D_8 \arrow[d, dashed] & A_4 \arrow[dl] & & \\
        & & S_4 & & &
    \end{tikzcd}
    \caption{Subgroup structre of $S_5$ corresponding to the quotients of Bring's curve.}
    \label{eq: structure of quotient groups}
\end{figure}
\end{center}

Finally, we recall from a Riemann-Hurwitz argument that each elliptic curves that comes a quotient from a genus-2 curve with $V_4$ symmetry has 2 marked points which are the branch points of the covering map, equivalently the images of the fixed points of the involution being quotiented by. We have shown that the preimages of these on Bring's curve are geometric points. In Figure \ref{eq: tracking fixed points} we illustrate the quotient structure highlighting these points. (We do not decorate the curve $\mathcal{B}/A_4 \cong \mathcal{E}_{3\tau_0}$, but include it so as to show every quotient with genus greater than 0.) Note the nodes of this diagram do not correspond directly to the placements of those above. 

\begin{center}
\begin{figure}[H]
    \begin{tikzcd}
        & & 6\textcolor{red}{\times}, 4\textcolor{blue}{\otimes}, 2\textcolor{blue}{\ocircle}, 4\textcolor{blue}{{\square}} {\includegraphics[height=0.05\textwidth]{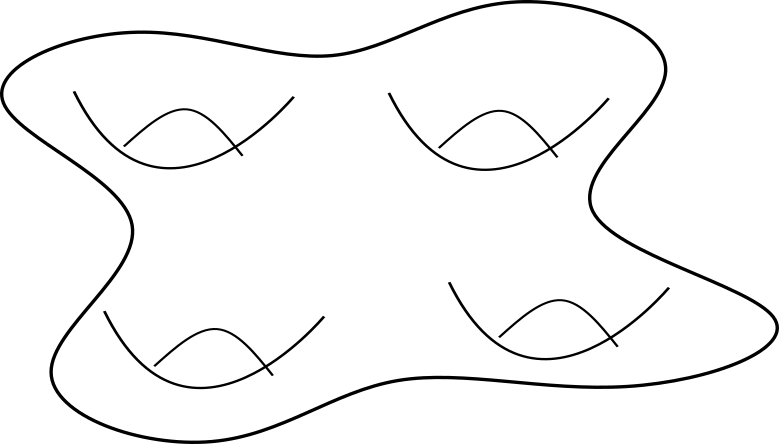}}  \arrow[dl] \arrow[d] \arrow[dr] & & & \\
        & 6\textcolor{red}{\times} {\includegraphics[height=0.05\textwidth]{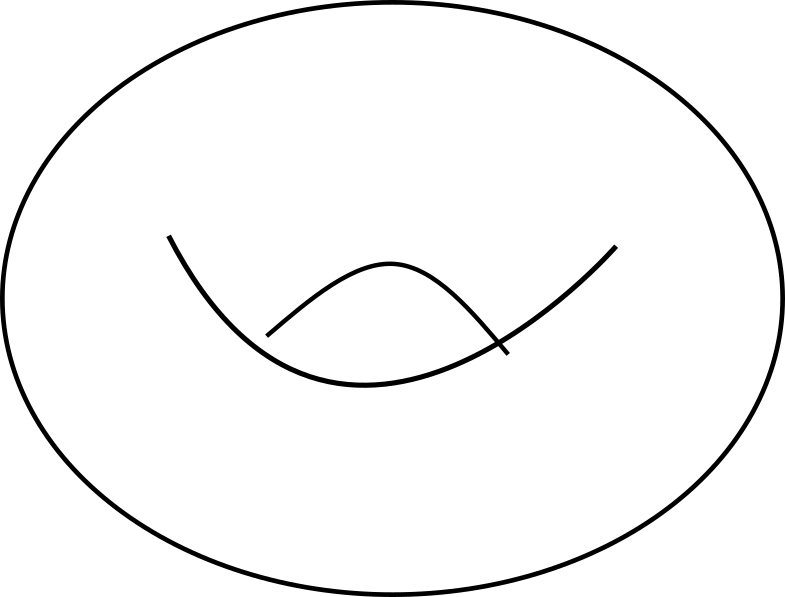}}  \arrow[d]  & 2\textcolor{red}{\times},  2\textcolor{blue}{\ocircle} \includegraphics[height=0.05\textwidth]{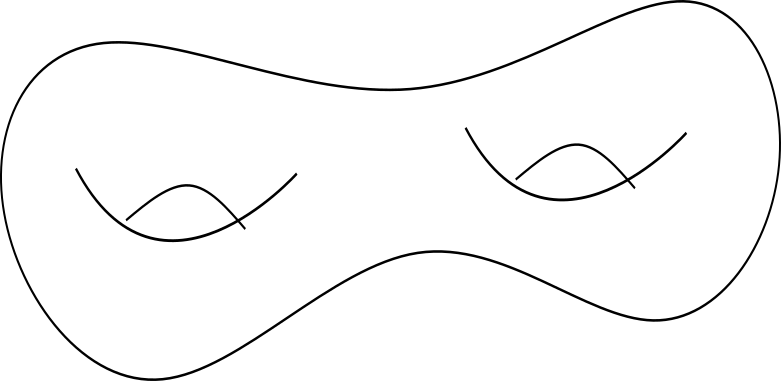}  \arrow[dl]  \arrow[d] & 2\textcolor{blue}{\ocircle}, 2\textcolor{blue}{{\square}} {\includegraphics[height=0.05\textwidth]{g2.png}} \arrow[d] \arrow[dr] & & \\
        & 2\textcolor{red}{\times} {\includegraphics[height=0.05\textwidth]{g1.png}}  &   2\textcolor{blue}{\ocircle} {\includegraphics[height=0.05\textwidth]{g1.png}}  & 2\textcolor{blue}{\square} {\includegraphics[height=0.05\textwidth]{g1.png}} \arrow[d] & 2\textcolor{blue}{\ocircle} {\includegraphics[height=0.05\textwidth]{g1.png}} & \\
        & & & \phantom{2\textcolor{blue}{\square}}{\includegraphics[height=0.05\textwidth]{g1.png}} & &
    \end{tikzcd}
\caption{Quotient structure including marked points. The points $\textcolor{red}{\times}$ correspond to the Weierstrass points $[1: 1: \alpha : \beta: \gamma]$ (and permutations of $\alpha, \beta, \gamma$), the points $\textcolor{blue}{\otimes}$ to the vertices $[1: -1: 0: \pm i : \mp i]$ and $[1: -1: \pm i : 0 : \mp i]$, the points $\textcolor{blue}{\ocircle}$ to the vertices $[1: -1: \pm i : \mp i : 0]$, and the points $\textcolor{blue}{{\square}}$ correspond to the vertices $[1: \pm i : \mp i : -1 : 0]$ and $[1: \pm i : -1 : \mp : 0]$.}
\label{eq: tracking fixed points}
\end{figure}
\end{center}

With the information of the quotients, we can now discuss the isogeny class and isomorphism class of the Jacobian of Bring's curve with the following results. 

\begin{prop}[\cite{Riera1992}, \S4, \cite{Weber2005}, Corollary 5.5]\label{prop: CC-isogeny class of Bring's Jacobian}
The $\mathbb{C}$-isogeny class of the Jacobian of Bring's curve is $\mathcal{E}_3^4$.
\end{prop}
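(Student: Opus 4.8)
The plan is to combine the explicit period matrix $\tau_{\mathcal{B}} = \tau_0 M_S$ from \S\ref{subsec: period matrix} with the rational diagonalisability of $M_S$, and then collapse the resulting product of elliptic curves using the known $\mathbb{Q}$-isogeny class. First I would write the Jacobian as the complex torus $\Jac\mathcal{B} = \mathbb{C}^4/(\mathbb{Z}^4 + \tau_0 M_S \mathbb{Z}^4)$. The key observation is that $M_S = 5\,\mathrm{Id} - J$, where $J$ is the all-ones $4\times 4$ matrix; since $J$ has eigenvalues $4$ (on the line spanned by $(1,1,1,1)^T$) and $0$ (on its orthogonal complement), $M_S$ has eigenvalues $1$ and $5$ (the latter with multiplicity three), with eigenspaces defined over $\mathbb{Q}$. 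Hence there is $P \in GL_4(\mathbb{Q})$ with $P M_S P^{-1} = D := \mathrm{diag}(1,5,5,5)$.

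Next I would transport the lattice by the $\mathbb{C}$-linear isomorphism $P : \mathbb{C}^4 \to \mathbb{C}^4$. Using $P\tau_0 M_S = \tau_0 D P$, the image lattice is $P\mathbb{Z}^4 + \tau_0 D (P\mathbb{Z}^4)$. Because $P$ is rational, $P\mathbb{Z}^4$ is commensurable with $\mathbb{Z}^4$, so up to isogeny the lattice is $\mathbb{Z}^4 + \tau_0 D\mathbb{Z}^4$. As $D$ is diagonal this lattice splits as a direct sum, giving
\[
\Jac\mathcal{B} \;\sim\; \mathcal{E}_{\tau_0} \times \mathcal{E}_{5\tau_0}^{3},
\]
where $\mathcal{E}_{n\tau_0}$ denotes the elliptic curve $\mathbb{C}/\pangle{1, n\tau_0}$. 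These factors are exactly (isogenous to) the elliptic quotients constructed in \S\ref{subsec: quotients by 4 and 2-2 cycle}--\S\ref{subsec: quotients by transposition}, which provides an independent geometric realisation of the splitting through the quotient maps $\mathcal{B} \to \mathcal{E}_{5\tau_0}$ and $\mathcal{B} \to \mathcal{E}_{\tau_0}$ inducing the relevant maps on Jacobians.

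Finally I would collapse the product to a single isogeny factor. As recorded in \S\ref{subsec: period matrix}, the curves with periods $\tau_0, 3\tau_0, 5\tau_0, 15\tau_0$ form a single $\mathbb{Q}$-isogeny class (and the explicit $3:1$ isogeny $\mathcal{E}_{5\tau_0} \to \mathcal{E}_{15\tau_0}$ was built in \S\ref{subsec: quotients by transposition}); over $\mathbb{C}$ each $\mathcal{E}_{n\tau_0}$ is isogenous to $\mathcal{E}_3 = \mathcal{E}_{3\tau_0}$ since $n\tau_0$ is a rational M\"obius transform of $3\tau_0$. Therefore $\mathcal{E}_{\tau_0} \times \mathcal{E}_{5\tau_0}^3 \sim \mathcal{E}_3^4$, which is the claim.

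I expect the main subtlety to be not the computation but the bookkeeping of what is preserved: one must check that a rational (rather than merely real) change of basis genuinely yields an isogeny of abelian varieties, i.e. that it respects the complex structure and carries a polarisation to a polarisation. This is automatic here because $P$ acts $\mathbb{C}$-linearly and rational linear maps between commensurable lattices are exactly isogenies, the positivity of the polarisation being inherited since all factors are genuine elliptic curves. A conceptually cleaner route, which I would mention, is purely representation-theoretic: the holomorphic differentials $v_1,\dots,v_4$ carry the standard (absolutely irreducible, $4$-dimensional, rational) representation of $S_5$ with multiplicity one, so the isotypic decomposition of $\Jac\mathcal{B}$ under $S_5$ forces it to be isogenous to the fourth power of a single elliptic curve, the representative of which is pinned down to $\mathcal{E}_3$ by the $j$-invariant computations above.
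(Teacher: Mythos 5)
Your argument is correct, and in fact both of your routes appear in the paper, which offers several proofs of this statement. Your primary argument --- diagonalising $M_S=5\id-J$ over $\mathbb{Q}$ to $\operatorname{diag}(1,5,5,5)$ and passing to a commensurable lattice --- is a variant of the paper's last proof, taken from Riera--Rodr\'iguez: there one simply right-multiplies the period matrix $\begin{pmatrix}1 & \tau M\end{pmatrix}$ by the integral block matrix $\operatorname{diag}(\id, 5M^{-1})$ to land directly on $\begin{pmatrix}\id & 5\tau\id\end{pmatrix}$, exhibiting $\mathcal{E}_{5\tau_0}^4$ as a finite-index sublattice quotient without needing the commensurability bookkeeping (the identity $(M-\id)(M-5\id)=0$ is what makes both computations work). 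Your intermediate splitting $\mathcal{E}_{\tau_0}\times\mathcal{E}_{5\tau_0}^3$ is also consonant with the Gonz\'alez--Rodr\'iguez isomorphism $\Jac\mathcal{B}\cong\mathcal{E}_2^3\times\mathcal{E}_1$ of complex tori recorded just after this proposition. The representation-theoretic route you sketch at the end (the standard $4$-dimensional rational representation of $S_5$ on $H^0(\Omega^1)$ forcing $\Jac\mathcal{B}\sim B^4$ with $B$ an elliptic curve) is essentially the paper's \emph{featured} proof, carried out via the Lange--Recillas isotypic/Prym decomposition for the $A_5$ action; the paper prefers that route because it is algebraic rather than transcendental and can therefore be upgraded, via Kani's results, to the statement about the $\mathbb{Q}$-isogeny class $(50a)^4$ in the following proposition --- something the period-matrix manipulation does not give. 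The only cosmetic caveat in your write-up: the final identification with $\mathcal{E}_3=\mathcal{E}_{3\tau_0}$ rests on the fact that $\mathcal{E}_{n\tau_0}$ for $n\in\{1,3,5,15\}$ all lie in one isogeny class, which you correctly invoke; your worry about polarisations is unnecessary for an isogeny statement, since a surjective $\mathbb{C}$-linear map with commensurable lattices between abelian varieties is automatically an isogeny of abelian varieties.
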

\begin{proof}
As we will later want to strengthen this result, we will use methods which relate subvarieties of the Jacobian to idempotents, and thus to subgroups of the automorphism group. Namely we will use \cite[\S4.2]{Lange2001}, which gives the isogeny decomposition in terms of Prym varieties of the Jacobian of a curve with $A_5$ action. Note, because of comments made before, Bring's curve is the unique genus-4 curve for which we could apply this argument. The result follows as, using the notation of Lange, $X_{A_4} = \mathcal{E}_3$, $X_{D_5} = X_{Z_5} = Y = \mathbb{P}^1$.

This proof also follows from \cite[Proposition 5.1]{Lange2001}, which uses only the action of $S_4$ on the curve. Moreover, one could use \cite[Theorem C]{Kani1989} taking the subgroups to be $\pangle{(12)}, \pangle{(34)}$, $C_4$ and $A_4$. Alternatively, using the isomorphism $A_5 \cong PSL_2(\mathbb{F}_5)$ and \cite[Example 2]{Kani1989} on can find $\Jac \mathcal{B} \sim \Jac \mathcal{C}_1 \times \Jac\mathcal{C}_2$ and proceed from there. These proof strategies all follow the same approach of looking for idempotents.

One may also obtain the result following the same method as R\&R. We use that fact that isogenies act on the period matrix by right multiplication by matrices $R \in M_{4}(\mathbb{Z})$, and so we have the required isogeny by taking $\lambda=1$ in the identity
$$
\begin{pmatrix} \lambda\sp{-1}\id \end{pmatrix} \begin{pmatrix} 1 & \tau M \end{pmatrix}  \begin{pmatrix} \lambda\id & 0 \\ 0 & M^\prime \end{pmatrix} = \begin{pmatrix}\id & \dfrac{5}{\lambda}\tau \id \end{pmatrix}  ,
$$
where
$$
M':= 5 M^{-1} = \begin{pmatrix} 2 & -1 & 1 & -1 \\ -1 & 2 & -1 & 1 \\ 1 & -1 & 2 & -1 \\ -1 & 1 & -1 & 2 \end{pmatrix}  .
$$
\end{proof}

Note that we are able to construct the quotient $\mathcal{B}/A_4$ directly from the $\mathbb{P}^4$-model using transformations with coefficients in $\mathbb{Q}$, and the resulting curve is defined over $\mathbb{Q}$, so by \cite[Remark 6]{Kani1989} the above proposition can be strengthened to a statement about the $\mathbb{Q}$-isogeny class of the Jacobian of Bring's curve. Calculating using Sage we find that the $\mathbb{Q}$-isogeny class of $\mathcal{B}/A_4$ is 50a using the Cremona labels for elliptic curve $\mathbb{Q}$-isogeny classes. Hence the following result holds. 

\begin{prop}[\cite{Serre2008}, Exercise 8.3.2(b)]\label{prop: JB Q-isogeny class}
The $\mathbb{Q}$-isogeny class of the Jacobian of the $\mathbb{P}^4$-model of Bring's curve is $(50a)^4$. 
\end{prop}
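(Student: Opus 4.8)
The plan is to upgrade the $\mathbb{C}$-isogeny statement of Proposition \ref{prop: CC-isogeny class of Bring's Jacobian} to one valid over $\mathbb{Q}$, and then to pin down the precise $\mathbb{Q}$-isogeny class of the single elliptic factor. The starting point is the decomposition $\Jac\mathcal{B}\sim\mathcal{E}_3^4$ already established, where $\mathcal{E}_3=\mathcal{B}/A_4$ is the elliptic curve of \eqref{ref: quotient by 3cycle}. Everything hinges on the observation that this decomposition can be obtained from the action of the rational subgroups $\pangle{(12)}$, $\pangle{(34)}$, $C_4$, $A_4$ on the curve, and that the relevant quotient $\mathcal{B}/A_4$ was constructed directly from the $\mathbb{P}^4$-model, defined over $\mathbb{Q}$ by \eqref{eq: P4 model of Bring}, using only transformations with coefficients in $\mathbb{Q}$.

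First I would invoke the field-of-definition refinement of Kani's idempotent-relations method, namely \cite[Remark 6]{Kani1989}. Since both $\mathcal{B}$ and the quotient $\mathcal{B}/A_4$ descend to $\mathbb{Q}$ (the latter via the explicit $\mathbb{Q}$-rational model \eqref{ref: quotient by 3cycle}), the isogeny $\Jac\mathcal{B}\sim(\mathcal{B}/A_4)^4$ may itself be realised over $\mathbb{Q}$. This is the key step and also the one requiring most care: the $\mathbb{C}$-isogeny proof admitted several routes (Lange--Recillas, Kani's Theorem C, or the period-matrix computation of R\&R), but only the group-theoretic idempotent argument, applied to subgroups that are rational and whose quotients are rationally defined, descends cleanly; the period-matrix route in particular gives no control over the field of definition.

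Second, having reduced to a single elliptic factor, I would identify the $\mathbb{Q}$-isogeny class of $\mathcal{E}_3$. A $\mathbb{Q}$-rational Weierstrass model is available from \eqref{ref: quotient by 3cycle} on setting $s_1=1$, and its $j$-invariant was computed to be $j_{\mathcal{E}_3}=5\cdot 211^3/2^{15}=j(3\tau_0)$. Matching this model against the tables of elliptic curves over $\mathbb{Q}$ (as recorded in the accompanying Sage notebook and cross-checked against \cite{LMFDB}) shows that $\mathcal{E}_3$ lies in the isogeny class with Cremona label 50a. This is consistent with \S\ref{subsec: period matrix}, where the four curves of periods $\tau_0$, $3\tau_0$, $5\tau_0$, $15\tau_0$ were noted to form a single $\mathbb{Q}$-isogeny class of order four following \cite{Birch1975, Serre2008}, so that $(50a)^4$ is independent of which of these elliptic factors is taken as representative.

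The main obstacle is the descent step: asserting that the isogeny, and not merely the abstract decomposition pattern, holds over $\mathbb{Q}$. I expect this to follow formally from \cite[Remark 6]{Kani1989} once one has checked that the subgroups entering the idempotent relation are stable under $\operatorname{Gal}(\overline{\mathbb{Q}}/\mathbb{Q})$ and that the associated quotient curves descend; since these are ordinary conjugacy-stable subgroups of $S_5\le\Aut(\mathcal{B})$ acting through $\mathbb{Q}$-rational semi-invariants, this verification is routine. The final matching to the label 50a is then a finite computation rather than a conceptual difficulty.
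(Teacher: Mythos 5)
Your proposal is correct and follows essentially the same route as the paper: descend the $\mathbb{C}$-isogeny $\Jac\mathcal{B}\sim(\mathcal{B}/A_4)^4$ to $\mathbb{Q}$ via \cite[Remark 6]{Kani1989}, using that the $A_4$-quotient \eqref{ref: quotient by 3cycle} is obtained from the $\mathbb{P}^4$-model by $\mathbb{Q}$-rational transformations, and then identify $\mathcal{B}/A_4$ computationally as lying in the Cremona class 50a. One small caution: your closing remark that $(50a)^4$ is independent of which elliptic factor is taken as representative is only safe because the descent forces all four factors to be $\mathbb{Q}$-isogenous to the single curve $\mathcal{E}_3$ --- matching $j$-invariants alone would not suffice, since the paper notes that quotients with these same $j$-invariants computed via the HC-model land in the distinct $\mathbb{Q}$-isogeny classes 50b and 450b.
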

Note in the above proposition we had to be careful to specify the Jacobian of the $\mathbb{P}^4$-model of Bring's curve, as we have seen that HC-model is not birationally equivalent over a field that doesn't contain $\mathbb{Q}[\zeta]$. The $\mathbb{Q}$-isogeny class of the elliptic curve $\mathcal{B}/\pangle{(12)(34), (13)(24)}$ calculated via the HC-model is 50b. Similarly, the $\mathbb{Q}$-isogeny class of the two elliptic curves covered by $\mathcal{C}_2$ is 450b, and the computation of the quotient required the coefficient field to be $\mathbb{Q}[\rho]$. In order to not have a contradiction with Proposition \ref{prop: JB Q-isogeny class}, we must have that the $\mathbb{Q}$-isogeny classes 50b and 50a merge over $\mathbb{Q}[\sqrt{5}]$, and that the isogeny classes 450b and 50a merge over $\mathbb{Q}[\rho]$, which is indeed the case.\footnote{The isogeny classes 450b and 50b merge over $\mathbb{Q}[\sqrt{-15}]$.}

One could use computational tools such as those in \cite{Booker2016, Lombardo2018}\footnote{To make the reconstruction process of Lombardo more accessible, we recreated in Sage some of the functions implemented by Lombardo in Magma.} to numerically find the $\mathbb{Q}$-isogeny class of the Jacobian of the HC-model of Bring's curve, as we did in the notebooks. Such computational results using idempotents can be helpful for developing our understanding, for example one can use computer algebra to search for relations between the characters $\operatorname{Ind}_H^{S_5}(1_H)$, that is the characters of $S_5$ induced from the trivial representation of $H \leq S_5$. Doing so gives relations between subvarieties of the Jacobian of Bring's curve following \cite[Theorem 3]{Kani1989}, for example 
\begin{equation}
\begin{aligned}\label{isogenies}
    J_{\pangle{(12)}} \times J_{\pangle{(12)(34), (13)(24)}} & \sim J_{\pangle{(12)(34)}} \times J_{\pangle{(12), (34)}} , \\
    J_{\pangle{(12)(34), (13)(24)}} \times J_{S_3} &\sim J_{\pangle{(12), (34)}} \times J_{S_3^\prime} .
\end{aligned}
\end{equation}
Using Riemann-Hurwitz arguments these would let us say that $\mathcal{B}/\pangle{(12)} \sim \mathcal{B}/C_4$ and $\mathcal{B}/\pangle{(12)(34), (13)(24)} \sim \mathcal{B}/S_3^\prime$ without having to do any calculation, entirely from group theory. The reason why these isogenies are actually isomorphisms is not clear.

Proposition \ref{prop: CC-isogeny class of Bring's Jacobian} can also be strengthened in a different direction.

\begin{prop}[\cite{Gonzalez2000}]
The Jacobian of Bring's curve is isomorphic as a complex torus to $\mathcal{E}_2^3 \times \mathcal{E}_1$.
\end{prop}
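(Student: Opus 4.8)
The plan is to work directly with the period matrix and to exploit that the statement asks only for an isomorphism of complex tori, not of principally polarised abelian varieties; that extra freedom is exactly what makes the argument go through. Using the symplectically equivalent form $\tau_{\mathcal{B}} = \tau_0 M_S$ from the Theorem of \S\ref{subsec: period matrix}, we have $\Jac \mathcal{B} = \mathbb{C}^4/\Lambda$ with $\Lambda$ the column lattice of $\Pi_{\mathcal{B}} = \pround{\id_4 \mid \tau_0 M_S}$, while $\mathcal{E}_2^3 \times \mathcal{E}_1 = \mathbb{C}^4/\Lambda'$ with $\Lambda'$ the column lattice of $\Pi' = \pround{\id_4 \mid \tau_0 D'}$, where $D' = \operatorname{diag}(5,5,5,1)$ and we have used $j_{\mathcal{E}_2} = j(5\tau_0)$, $j_{\mathcal{E}_1} = j(\tau_0)$ so that $\mathcal{E}_2 \cong \mathbb{C}/\pangle{1, 5\tau_0}$ and $\mathcal{E}_1 \cong \mathbb{C}/\pangle{1,\tau_0}$. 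Two such tori are isomorphic precisely when there exist $A \in GL_4(\mathbb{C})$ and $N \in GL_8(\mathbb{Z})$ with $A \, \Pi_{\mathcal{B}} \, N = \Pi'$, so the whole problem reduces to producing such a pair.

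Writing $N$ in $4 \times 4$ blocks $N = \pround{\begin{smallmatrix} N_{11} & N_{12} \\ N_{21} & N_{22} \end{smallmatrix}}$ and eliminating $A = (N_{11} + \tau_0 M_S N_{21})^{-1}$, the condition becomes the single matrix identity $N_{12} + \tau_0 M_S N_{22} = \tau_0 N_{11} D' + \tau_0^2 M_S N_{21} D'$. Here is where the transcendence of $\tau_0$, established in \S\ref{subsec: period matrix}, does the essential work: both sides are polynomials in $\tau_0$ with integer-matrix coefficients, so we may equate coefficients of $1$, $\tau_0$ and $\tau_0^2$ separately. This forces $N_{12} = 0$, $N_{21} = 0$, and $M_S N_{22} = N_{11} D'$; in other words $N$ must be block-diagonal and $N_{11}^{-1} M_S N_{22} = D'$ with $N_{11}, N_{22} \in GL_4(\mathbb{Z})$. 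Thus the claim is equivalent to the assertion that the Smith normal form of the integer matrix $M_S$ is $\operatorname{diag}(1,5,5,5)$ (matched to $D'$ up to the harmless reordering by a permutation matrix in $GL_4(\mathbb{Z})$).

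It then remains to compute this normal form, which is routine: $M_S = 5\,\id_4 - J$ with $J$ the all-ones matrix, so its eigenvalues are $1$ and $5$ with multiplicities $1$ and $3$, giving $\det M_S = 125$; the gcd of its entries is $1$, and modulo $5$ it has rank $1$, forcing exactly one elementary divisor coprime to $5$, and together these pin the elementary divisors down to $1,5,5,5$. Choosing $U, V \in GL_4(\mathbb{Z})$ realising $U M_S V = \operatorname{diag}(1,5,5,5)$ and setting $N_{11} = U^{-1}$, $N_{22} = V$ (reordered), $A = U$, yields the desired transformation and hence $\Jac \mathcal{B} \cong \mathcal{E}_2^3 \times \mathcal{E}_1$. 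The main conceptual point, and the only genuine obstacle, is the reduction in the previous paragraph: recognising that transcendence of $\tau_0$ collapses the full $GL_8(\mathbb{Z})$ freedom to a block-diagonal integral equivalence of $M_S$, turning an a priori analytic isomorphism question into a finite computation of elementary divisors. Note that the corresponding statement for polarised abelian varieties would require $N$ to be symplectic, which this block-diagonal $N$ need not be; this is precisely why the result holds, and is proved, only at the level of complex tori.
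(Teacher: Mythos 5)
Your proof is correct, and at bottom it rests on the same fact as the paper's: the period matrix $(\id_4\mid\tau_0 M)$ of $\Jac\mathcal{B}$ is carried to $(\id_4\mid\tau_0\operatorname{diag}(5,5,5,1))$ by a $GL_4(\mathbb{C})$ factor on the left together with a \emph{block-diagonal} element of $GL_8(\mathbb{Z})$ on the right, which is exactly the integral equivalence of $M$ (or of $M_S=5\id_4-J$, conjugate to $M$ by the unimodular $\operatorname{diag}(1,-1,1,-1)$) with $\operatorname{diag}(5,5,5,1)$. The difference is in execution: the paper simply exhibits the explicit matrices $C$, $D$, $E$ imported from Gonz\'alez-Aguilera--Rodr\'iguez and Riera--Rodr\'iguez and leaves the identity to be checked, whereas you \emph{derive} the existence of such matrices from the Smith normal form of $M_S$, pinned down as $\operatorname{diag}(1,5,5,5)$ by the determinant $125$, the gcd of the entries, and the rank mod $5$. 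You also add something the paper does not state: since $\tau_0$ is transcendental, equating coefficients of $1,\tau_0,\tau_0^2$ forces \emph{any} lattice isomorphism to be block-diagonal, so the reduction to elementary divisors is necessary as well as sufficient. (That necessity step is not logically required for the existence claim being proved, but it is sound and explains why the computation is the right one; your closing observation that the resulting $N$ need not be symplectic matches the paper's own remark after the proposition.) The only thing you leave implicit is the explicit choice of unimodular $U,V$, but their existence is guaranteed by Smith normal form theory, so nothing essential is missing.
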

\begin{proof}
The proof in \cite{Gonzalez2000} is very general, considering Jacobians whose period matrix is invariant under $S_n$ for any $n$. The isomorphism from the period matrix given in \cite{Riera1992} is
$$
C \begin{pmatrix}
1 & \tau M
\end{pmatrix} \begin{pmatrix}
D & 0 \\ 0 & E
\end{pmatrix} = \begin{pmatrix}1, \tau \operatorname{diag}(5, 5, 5, 1) \end{pmatrix}
$$ 
where
$$
C = \begin{pmatrix}
-1 & 1 & -1 & 2 \\ 1 & 0 & 0 & 1 \\ 0 & -1 & 0 & 1 \\ 0 & 0 & 0 & 1
\end{pmatrix},  D = \begin{pmatrix}
0 & 1 & 0 & -1 \\ 0 & 0 & -1 & 1 \\ -1 & -1 & -1 & 4 \\ 0 & 0 & 0 & 1
\end{pmatrix},  E = \begin{pmatrix}
-1 & 1 & 0 & 0 \\ 1 & 0 & -1 & 0 \\ -2 & -1 & -1 & 1 \\ 1 & 0 & 0 & 0
\end{pmatrix}  .
$$
\end{proof}
Note it would not be possible that the Jacobian is isomorphic to the product of the elliptic curves as a principally polarised abelian variety, as it is well know that the Jacobian of any smooth compact Riemann surface with canonical principal polarisation is irreducible. In particular this means the matrix $\begin{pmatrix}
D & 0 \\ 0 & E
\end{pmatrix}$ of the proposition is not symplectic.

\section{Theta Characteristics}\label{sec: theta characteristics}
We now investigate the theta characteristics of Bring's curve, which in \cite{Atiyah1971} were shown to be equivalent to spin structures, and as such we will use the terms interchangeably. These are objects which have been studied greatly in a variety of contexts in algebraic geometry, number theory, and even string theory, though in this paper we will not discuss these applications and focus on calculation. Among other results we shall identify the unique invariant spin structure of Bring's curve.

For completeness we recall the main definition that we shall use in this paper.
\begin{definition}
A \bam{theta characteristic} is a half-canonical divisor class, i.e. a divisor class $[D]$ such that $2[D] = [\mathcal{K}_{\mathcal{B}}]$. 
\end{definition}
\begin{lemma}
Call a theta characteristic odd/even based on the parity of it's index of speciality. Then on a Riemann surface of genus $g$ there are $2^{g-1}(2^g-1)$ odd theta characteristics and $2^{g-1}(2^g+1)$ even theta characteristics. 
\end{lemma}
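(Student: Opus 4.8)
The plan is to reduce the count to a purely combinatorial fact about quadratic forms over $\mathbb{F}_2$. First I would record that the set of theta characteristics on a Riemann surface $X$ of genus $g$ is a torsor under the $2$-torsion $\Jac(X)[2] \cong \mathbb{F}_2^{2g}$: fixing any one half-canonical class $[\theta_0]$, every theta characteristic is uniquely $[\theta_0] + \eta$ with $\eta \in \Jac(X)[2]$, so there are exactly $2^{2g}$ of them. Choosing a symplectic basis of $\Jac(X)[2]$, equivalently working with the classical half-integer characteristics $\begin{bmatrix}\epsilon\\\epsilon'\end{bmatrix}$ with $\epsilon,\epsilon'\in\{0,1\}^g$, then identifies the theta characteristics with pairs $(\epsilon,\epsilon')\in(\mathbb{F}_2^g)^2$.

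The crux is the behaviour of the parity function $q(\theta):=h^0(\theta)\bmod 2$, noting that for a half-canonical class the index of speciality $h^0(\mathcal{K}-\theta)$ equals $h^0(\theta)$. The key input is Riemann's theorem (the Riemann--Mumford relation, see \cite{Atiyah1971}), which asserts that $q$ is a quadratic form on $\Jac(X)[2]$ refining the Weil pairing $\langle\cdot,\cdot\rangle$, so that $q(\eta_1+\eta_2)=q(\eta_1)+q(\eta_2)+\langle\eta_1,\eta_2\rangle$, and that in the standard coordinates above it takes the shape $q(\epsilon,\epsilon')=\epsilon\cdot\epsilon'\pmod 2$. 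Granting this, a theta characteristic is odd precisely when $\epsilon\cdot\epsilon'=1$ and even when $\epsilon\cdot\epsilon'=0$. I would obtain this either from the analytic statement $\theta\begin{bmatrix}\epsilon\\\epsilon'\end{bmatrix}(-z)=(-1)^{\epsilon\cdot\epsilon'}\theta\begin{bmatrix}\epsilon\\\epsilon'\end{bmatrix}(z)$, so that the associated section is odd or even in $z$ exactly according to $\epsilon\cdot\epsilon'$, or from the sheaf-theoretic version of the parity count in \cite{Atiyah1971}.

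The count is then elementary. The number of odd characteristics is
\[
\#\{(\epsilon,\epsilon'):\epsilon\cdot\epsilon'=1\}=\sum_{\epsilon\neq 0}\#\{\epsilon':\epsilon\cdot\epsilon'=1\}=(2^g-1)\cdot 2^{g-1}=2^{g-1}(2^g-1),
\]
using that for each fixed nonzero $\epsilon$ the affine equation $\epsilon\cdot\epsilon'=1$ has $2^{g-1}$ solutions. Subtracting from the total gives $2^{2g}-2^{g-1}(2^g-1)=2^{2g-1}+2^{g-1}=2^{g-1}(2^g+1)$ even characteristics. Equivalently, one records that $q$ has Arf invariant $0$, so the Gauss sum $\sum_\theta(-1)^{q(\theta)}=+2^g$ forces the distribution $2^{g-1}(2^g\pm 1)$.

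The main obstacle is concentrated entirely in the middle step: establishing that the parity of the index of speciality is a quadratic form refining the Weil pairing and pinning it down in coordinates (which simultaneously fixes the Arf invariant to be $0$, and hence decides which of the two classes is the larger one). The torsor structure and the $\mathbb{F}_2$-combinatorics are routine bookkeeping; the genuine geometric content is Riemann's theorem that $h^0\bmod 2$ is quadratic, and this is the one ingredient that cannot be circumvented by counting alone.
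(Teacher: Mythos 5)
Your proof is correct, and it is the standard classical argument. For comparison: the paper does not prove this lemma at all --- it is stated as a known fact, with \cite{Farkas2012} cited for a survey of the equivalent definitions of parity --- so there is no ``paper's route'' to measure you against. Your reduction is the right one: the torsor structure under $\Jac(X)[2]$ gives the total count $2^{2g}$, the Riemann--Mumford relation (which you correctly isolate as the one genuinely geometric input, and which you are entitled to quote from \cite{Atiyah1971} since the lemma is classical) identifies $h^0 \bmod 2$ with the quadratic form $\epsilon\cdot\epsilon'$ in symplectic coordinates, and the $\mathbb{F}_2$-count $(2^g-1)\cdot 2^{g-1}$ for the odd locus is elementary. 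Two minor points of hygiene: the quadratic refinement $q$ is really a quadratic function on the torsor, becoming a quadratic form on $\Jac(X)[2]$ only after the choice of base point $[\theta_0]$ (equivalently, of the Riemann constant vector), and your closing remark about the Arf invariant is offered as an equivalent reformulation rather than an independent argument --- deducing the Arf invariant from the count and the count from the Arf invariant would be circular, but since your main count stands on its own via $q=\epsilon\cdot\epsilon'$, this is only a matter of presentation.
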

There are many equivalent definitions of parity for a theta characteristic, and while the above is not computationally the easiest, it is one of the easier to state. \cite{Farkas2012} provides a nice overview of these and their connections. 
\begin{example}\label{example: theta characteristic on Bring's curve}
It is a computational exercise to verify $\mathcal{K}_\mathcal{B} \sim \pround{\frac{-x v_3 v_4}{v_2}} = 2(3a+b-c)$, and so $\Delta = 3a+b-c$ is a theta characteristic on Bring's curve. A simple calculation in Sage shows that it is even. 
\end{example}

\subsection{Abel-Jacobi and the Riemann Constant Vector}\label{subsec: AJ and RCV}
We will now take a detour to lay out some notation and definitions in preparation for later.
\begin{definition}\label{def: AJ map and RCV}
The \bam{Abel-Jacobi map} on a Riemann surface $\mathcal{C}$ with basis of differentials $\pbrace{\omega_j}$, based at $Q \in \mathcal{C}$, is given by 
\[
(\mathcal{A}_Q(P))_j = \int_Q^P \omega_j  .
\]
This is well defined up to periods on the curve. Letting $\tau$ be the Riemann matrix of $\mathcal{C}$, the \bam{Riemann Constant Vector (RCV)} based at $Q$ is given by 
\[
(K_Q)_j = -\frac{1+\tau_{jj}}{2} + \sum_{k \neq j}^g \oint_{a_k} \omega_k(P) (\mathcal{A}_Q(P))_j  .
\]
\end{definition}
The RCV is related to the canonical divisor by $2K_Q\equiv\mathcal{A}_Q(\mathcal{K}_\mathcal{C})$. Indeed, \cite{Deconinck2015} shows how to find $K_Q$ from this relation given the theta function. We then have the following theorem about the divisor of the Sz\"ego kernel $\Delta_S$, defined in \cite{Fay1973}. 
\begin{theorem}[\cite{Fay1973}]
$K_Q = \mathcal{A}_Q(\Delta_S)$, and as such $\Delta_S$ is a theta characteristic.
\end{theorem}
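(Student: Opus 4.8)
The plan is to reproduce Fay's argument, whose engine is Riemann's vanishing theorem combined with the relation $2K_Q \equiv \mathcal{A}_Q(\mathcal{K}_{\mathcal{C}})$ recorded just above. I would first separate the two assertions. Granting the equality $K_Q = \mathcal{A}_Q(\Delta_S)$, the fact that $\Delta_S$ is a theta characteristic is immediate: doubling gives $2\mathcal{A}_Q(\Delta_S) = 2K_Q \equiv \mathcal{A}_Q(\mathcal{K}_{\mathcal{C}})$, so $\mathcal{A}_Q(2\Delta_S) \equiv \mathcal{A}_Q(\mathcal{K}_{\mathcal{C}})$, and injectivity of the Abel--Jacobi map on divisor classes of equal degree forces $2\Delta_S \sim \mathcal{K}_{\mathcal{C}}$. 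Thus the entire content is concentrated in the equality, and that is what I would work to prove.

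To establish $K_Q = \mathcal{A}_Q(\Delta_S)$, I would recall the definition of the Szeg\H{o} kernel from \cite{Fay1973} as the $\tfrac12$-differential (in each of $P$ and $Q$)
\[
S(P,Q) = \frac{\theta[\chi]\pround{\mathcal{A}_Q(P) - \mathcal{A}_Q(Q)}}{\theta[\chi](0)\, E(P,Q)},
\]
where $[\chi]$ is the non-singular theta characteristic defining the kernel and $E(P,Q)$ is the prime form, which vanishes to first order only on the diagonal. Fixing the base point $Q$, I would regard $S(\cdot,Q)$ as a meromorphic section of the spin line bundle $L$ attached to $[\chi]$ (so that $L^{\otimes 2} \cong \mathcal{K}_{\mathcal{C}}$), whose only singularity is the simple pole at $P=Q$ coming from $E$; then $\Delta_S$ is its divisor of zeros, corrected by this pole. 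The location of those zeros is governed by the numerator: applying Riemann's vanishing theorem to $P \mapsto \theta(\mathcal{A}_Q(P) - e)$, whose $g$ zeros have Abel--Jacobi images summing to $e - K_Q$, with $e$ read off from the half-period associated to $[\chi]$, and then subtracting the diagonal contribution of $E$, yields an expression for $\mathcal{A}_Q(\Delta_S)$ in terms of $K_Q$ and the characteristic.

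The hard part will be the final matching: showing that this expression is exactly $K_Q$ rather than $K_Q$ shifted by a nonzero point of order two. The relation $2K_Q \equiv \mathcal{A}_Q(\mathcal{K}_{\mathcal{C}})$ already pins the answer down to $K_Q$ modulo the two-torsion subgroup of the Jacobian, so the delicate work is to show that this residual half-period vanishes. This requires careful bookkeeping of the automorphy factors relating $\theta[\chi]$ to the Riemann theta function $\theta$, together with the precise quasi-periodicity of the prime form, so that the half-period entering the definition of $S$ cancels against the shift produced by Riemann's theorem; this conventions-dependent cancellation is where I expect the real difficulty to lie. For Bring's curve specifically, I would then cross-check the identity numerically using the explicit Abel--Jacobi map available in \cite{DisneyHogg2021}, confirming that the two-torsion term is indeed trivial and that $\Delta_S$ coincides with the distinguished theta characteristic singled out later in this section.
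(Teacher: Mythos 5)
The paper does not actually prove this statement: it is quoted directly from \cite{Fay1973}, so there is no in-paper argument to measure yours against. Judged on its own terms, your sketch has the right architecture --- Riemann's vanishing theorem applied to the numerator of the Szeg\H{o} kernel, the prime form accounting for the pole at $P=Q$, the degree count ($g$ zeros minus one pole giving a degree $g-1$ divisor), and the observation that the relation $2K_Q \equiv \mathcal{A}_Q(\mathcal{K}_{\mathcal{C}})$ makes the second assertion an immediate consequence of the first. That much is a faithful reconstruction of Fay's reasoning.

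The genuine gap is that you defer the one step that carries the content. You set up the kernel for an arbitrary non-singular characteristic $[\chi]$ and hope that ``the half-period entering the definition of $S$ cancels against the shift produced by Riemann's theorem.'' It does not cancel in general: running your own computation through, Riemann's theorem gives $\mathcal{A}_Q$ of the zero divisor of $P \mapsto \theta[\chi](\mathcal{A}_Q(P))$ as $K_Q$ \emph{plus} the half-period attached to $[\chi]$, and since $\mathcal{A}_Q(Q)=0$ the pole of the prime form removes nothing from this sum. So for general $[\chi]$ one obtains $\mathcal{A}_Q(\Delta_S) = K_Q + (\text{half-period of }[\chi])$, and the stated identity holds precisely when $[\chi]$ is the zero characteristic --- i.e.\ when $\Delta_S$ is the kernel built from the Riemann theta function itself. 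This is not a conventions-dependent cancellation to be hunted down; it is the definition of which Szeg\H{o} kernel the theorem is about, and it is exactly why the paper remarks immediately afterwards that $\Delta_S$ depends on the homology basis (the zero characteristic is only distinguished once a symplectic basis is fixed). Making the choice $[\chi]=[0]$ explicit at the outset would close the gap and shorten the argument. Finally, the closing numerical verification on Bring's curve belongs to Proposition \ref{szegoinvt}, not to this theorem, which is a general statement about Riemann surfaces and should be proved as such.
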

The definition of the RCV also requires that the differentials are normalised with respect to the homology basis taken $\pbrace{a_j , b_j}$ such that $\oint_{a_j} \omega_i = \delta_{ij}$. As such hidden in the above result is the fact that the Sz\"ego kernel divisor is homology dependent, because the RCV is.

\subsection{Tritangent Planes and Odd Characteristics}\label{subsec: tritangent planes}
Recall that when we consider the canonical embedding of a curve in $\mathbb{P}^{g-1}$, the intersection with a hyperplane gives an effective element of the canonical divisor class of the curve, as it corresponds to the zero-locus of a differential on the curve. A tritangent plane is a plane that intersects the curve tangentially in three points. If a hyperplane is tangent at each intersection (i.e. the intersection is of order 2) then this naturally constructs an effective theta divisor on the curve, and moreover every such theta characteristic $\theta$ gives a hyperplane \cite{Celik2019}. For any sextic that is the intersection of a smooth quadric and smooth quartic in $\mathbb{P}^3$, for example Bring's curve, the 120 tritangent planes are in 1-1 correspondence with the 120 odd theta characteristics \cite[Theorem 2.2]{Harris2017}.

On Bring's curve we have the following result about these planes.
\begin{prop}[\cite{Edge1981}]
The 120 tritangent planes on Bring's curve split into two classes, 60 in each class;
\begin{enumerate}
    \item those where all three contact points are Weierstrass points, and 
    \item those where only one contact points is a Weierstrass point. 
\end{enumerate}
Planes in the first and second class respectively have equations (recalling notation from \S\ref{subsec: Weierstrass points})
\begin{align*}
\Pi_{\alpha  jk }^{(1)} &:= \pbrace{\frac{x_j}{\beta} - \frac{x_k}{\gamma} = 0 } , \\
\Pi_{ijk}^{(2)} &:= \pbrace{(\alpha-1)(\alpha+4)x_i + (\beta-1)(\beta+4)x_j + (\gamma-1)(\gamma+4)x_k =0 } , 
\end{align*}
where $[x_i] \in \mathbb{P}^4$ and $i,j,k$ distinct.
\end{prop}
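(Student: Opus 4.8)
The plan is to exhibit the two families explicitly, verify that each listed plane is genuinely tritangent with the stated contact points, and then close the count: since by \cite{Harris2017} there are exactly $120$ tritangent planes (one for each odd theta characteristic), producing $60+60=120$ distinct tritangent planes of the two types shows we have found them all. Throughout I would use that both tritangency and the number of Weierstrass points among the contact points are preserved by $\Aut(\mathcal{B})=S_5$, so it suffices to treat one representative per $S_5$-orbit. Working modulo the relation $H_1=\sum_i x_i=0$ (which is how a hyperplane of the canonical $\mathbb{P}^3=\{H_1=0\}$ is recorded in $\mathbb{P}^4$-coordinates), a short computation on coefficient vectors shows $\Pi^{(2)}_{ijk}$ has stabiliser the transposition of the two indices outside $\{i,j,k\}$, giving a single orbit of size $60$, while $\Pi^{(1)}_{\alpha jk}$ has stabiliser $S_3$ on the three indices outside $\{j,k\}$, giving three orbits of size $20$ (one per unordered pair of roots), again $60$ planes in total.

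For the contact points I would use the power sums of the roots of $x^3+2x^2+3x+4$, namely $\sum_r r=\sum_r r^2=\sum_r r^3=-2$. Since the gradients of $H_1,H_2,H_3$ at a point $P=(x_1,\dots,x_5)$ are $\mathbf{1}=(1,\dots,1)$, $P$, and $P^{(2)}=(x_1^2,\dots,x_5^2)$, a hyperplane $\{L=0\}$ through a smooth point $P$ is tangent to $\mathcal{B}$ there exactly when $\nabla L\in\mathrm{span}\{\mathbf{1},P,P^{(2)}\}$. For $\Pi^{(1)}_{\alpha 45}=\{\gamma x_4-\beta x_5=0\}$ one checks that $W_{145},W_{245},W_{345}$ lie on the plane, and that the rank condition at each of them reduces to the single identity $\sum_r r(r-1)(r-\alpha)=0$, which is immediate from the power sums. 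Tangency at three distinct points forces intersection multiplicity $\geq 2$ at each; as the plane section is a canonical divisor of degree $2g-2=6$, this already exhausts the intersection, so $\Pi^{(1)}_{\alpha 45}\cap\mathcal{B}=2(W_{145}+W_{245}+W_{345})$ and the plane is tritangent with all three contacts Weierstrass. This settles class (1) completely.

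For class (2) the rank computation at the single Weierstrass point $W_{345}\in\Pi^{(2)}_{345}$ is even cleaner: writing $g(t)=(t-1)(t+4)=t^2+3t-4$, the required vector is $(0,0,g(\alpha),g(\beta),g(\gamma))=-4\,\mathbf{1}+3P+P^{(2)}$ at $P=W_{345}$, because $g$ is itself a quadratic and $g(1)=0$, so tangency at $W_{345}$ is automatic. The obstacle is the residual: tangency at one point only yields multiplicity $\geq 2$, leaving a residual of degree $\leq 4$, and the degree argument of class (1) no longer suffices. The main difficulty is therefore to show this residual is again a double divisor $2(Q+Q')$ with $Q,Q'$ not Weierstrass. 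I would do this by solving, simultaneously with $H_1=H_2=H_3=0$, the rank-$\leq 3$ condition on the matrix $[\,\nabla L\mid\mathbf{1}\mid P\mid P^{(2)}\,]$ that cuts out the tangency locus, showing it consists of exactly three reduced points $W_{345},Q,Q'$ with $Q,Q'$ a conjugate pair outside the $60$ Weierstrass points; equivalently, and more practically, substituting $L=0$ into the curve equations and factoring the resulting zero-dimensional scheme to confirm it equals $2(W_{345}+Q+Q')$. This is the one genuinely computational step, where I would use Gröbner bases in Macaulay2 or the Abel--Jacobi verification in Sage, and by $S_5$-invariance a single representative handles all $60$ planes.

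Finally I would assemble the count. A class (1) plane has its contact divisor on three Weierstrass points whereas a class (2) plane has only one Weierstrass contact, so no plane is of both types; within each family distinctness follows from the coefficient vectors, checked modulo $\mathbf{1}$ (using that $g$ is injective on the roots and that no two roots are negatives of each other, the latter because $-2$ is not a root). This gives $60+60=120$ distinct tritangent planes, which by \cite{Harris2017} exhaust the odd theta characteristics. Hence every tritangent plane is of one of the two stated forms with the stated contact behaviour, completing the proof.
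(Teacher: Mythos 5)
Your proposal is essentially correct, and it is worth saying up front that the paper itself offers no proof of this statement: the proposition is quoted from Edge (1981), and the only place the paper engages with its content is in the proof of the corollary that follows, where the contact divisors $T^{(1)}_{\alpha jk}=\sum_{i\neq j,k}W_{ijk}$ and $T^{(2)}_{ijk}=W_{ijk}+O^{+}_{ijk}+O^{-}_{ijk}$ are written down (with the points $O^{\pm}_{ijk}$ given explicitly) and used for an orbit--stabiliser count. Your argument is therefore a genuinely independent derivation rather than a reconstruction of the paper's. The load-bearing pieces all check out: the tangency criterion $\nabla L\in\operatorname{span}\pbrace{\mathbf{1},P,P^{(2)}}$ is the right one for the complete intersection $H_1=H_2=H_3=0$; the power sums of $x^3+2x^2+3x+4$ are indeed $p_1=p_2=p_3=-2$, from which $\sum_r r(r-1)(r-\alpha)=p_3-(1+\alpha)p_2+\alpha p_1=0$ follows and settles tangency of $\Pi^{(1)}_{\alpha 45}$ at all three of $W_{145},W_{245},W_{345}$; the degree-$6$ argument then closes class (1) completely; and the identity $(0,0,g(\alpha),g(\beta),g(\gamma))=-4\,\mathbf{1}+3P+P^{(2)}$ at $P=W_{345}$ is a clean way to see tangency of $\Pi^{(2)}_{345}$ at its Weierstrass contact. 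Your stabiliser computations ($S_3$ giving three orbits of $20$, a single transposition giving one orbit of $60$) reproduce the decomposition $120=20+20+20+60$ of the paper's corollary, and the distinctness checks (separability of the cubic, $-2$ and $1$ not being roots, injectivity of $g$ on the roots) are the right ones. The one place where you have correctly identified, but not discharged, a genuine obligation is the residual of the class-(2) planes: tangency at $W_{ijk}$ alone leaves a degree-$4$ residual, and nothing in your argument short of the proposed Gr\"obner-basis computation shows it has the form $2(Q+Q')$ with $Q,Q'$ non-Weierstrass --- without this, the class-(2) planes are not yet known to be tritangent and the $60+60=120$ count collapses. The paper effectively resolves this by exhibiting $O^{\pm}_{ijk}$ explicitly, and deferring such a verification to computer algebra is entirely in the spirit of the paper's methodology, but you should present that computation (one representative suffices by $S_5$-equivariance, as you say) rather than leave it as a plan. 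What your route buys, compared with simply citing Edge, is a self-contained and checkable proof that leans on the modern Harris--Len bijection between tritangent planes and odd theta characteristics to close the count; what it costs is that the heart of class (2) remains a machine verification rather than a conceptual argument.
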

To clarify the notation we have used for the planes, recall that the position of the indices on Weierstrass points $W_{ijk}$ indicates which root was equal to $x_i$. The same principle holds for $\Pi_{\alpha jk}^{(1)},  \Pi^{(1)}_{i \beta k},$ and $\Pi^{(1)}_{ij \gamma}$. 
\begin{corollary}\label{cor: Orbit decomposition of odd theta characteristics}
The orbit decomposition of odd theta characteristics on Bring's curve is \begin{equation}\label{eq:oddcharacteristics}
120=20+20+20+60 . 
\end{equation}
\end{corollary}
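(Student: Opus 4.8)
The plan is to transport the question from odd theta characteristics to tritangent planes, where the $S_5$-action is completely transparent. By \cite[Theorem 2.2]{Harris2017} the $120$ odd theta characteristics are in bijection with the $120$ tritangent planes, and this bijection is geometric: a tritangent plane $\Pi$ cuts $\mathcal{B}$ in a divisor $2(P+Q+R)$, and $[P+Q+R]$ is the associated odd half-canonical class. Since every $g \in \Aut(\mathcal{B}) = S_5$ carries a tritangent plane to a tritangent plane and commutes with the formation of this divisor class, the correspondence is $S_5$-equivariant. It therefore suffices to compute the orbit decomposition of $S_5$ acting on the $120$ tritangent planes, recalling that on the $\mathbb{P}^4$-model $S_5$ acts by permuting the coordinates $x_1,\dots,x_5$.

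First I would note that the two Edge classes are each $S_5$-invariant: a class-one plane meets $\mathcal{B}$ in three Weierstrass points and a class-two plane in exactly one, and since $S_5$ preserves the set of Weierstrass points (Proposition \ref{WPprop}) it preserves the number of Weierstrass contact points, hence each class. It remains to decompose each $60$-element class. For the class-two planes, the datum defining $\Pi^{(2)}_{ijk}$ is the ordered triple of distinct coordinates $(i,j,k)$ receiving the fixed coefficients $(\alpha-1)(\alpha+4)$, $(\beta-1)(\beta+4)$, $(\gamma-1)(\gamma+4)$, and permuting coordinates by $\sigma$ simply relabels this triple. As $S_5$ acts transitively on the $5\cdot 4\cdot 3 = 60$ ordered triples of distinct indices (the stabiliser of $(i,j,k)$ is the transposition of the two remaining indices, of order $2$, so the orbit has length $120/2 = 60$), the class-two planes form a single orbit of size $60$.

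For the class-one planes the crucial point is that each $\sigma \in S_5$ permutes the coordinates but fixes each root $\alpha,\beta,\gamma$ individually. The family splits according to the distinguished root recorded in the subscript (equivalently, the unique root absent from the defining equation): the sub-families of $\Pi^{(1)}_{\alpha jk}$, $\Pi^{(1)}_{i\beta k}$, and $\Pi^{(1)}_{ij\gamma}$. Because $\sigma$ preserves this distinguished root, the three sub-families are $S_5$-stable and cannot merge. Within the sub-family distinguished by $\alpha$, the plane $x_j/\beta - x_k/\gamma = 0$ is determined by the ordered pair $(j,k)$ of coordinates carrying $\beta,\gamma$, and $S_5$ acts transitively on the $5\cdot 4 = 20$ such pairs (stabiliser $S_3$ on the remaining three indices, orbit length $120/6 = 20$). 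Hence each sub-family is an orbit of size $20$, giving $60 = 20+20+20$ and the claimed decomposition $120 = 20+20+20+60$.

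The main obstacle is essentially careful bookkeeping rather than a deep difficulty: one must confirm the $S_5$-equivariance of the theta/plane correspondence (which reduces to the naturality of "plane $\mapsto$ tangency divisor"), read off correctly from Edge's equations which combinatorial data label the planes, and verify that the listed planes are pairwise distinct so that no orbit collapses. This last check reduces to showing the coefficient vectors are pairwise non-proportional for distinct index data, which follows from $\alpha,\beta,\gamma$ being distinct roots of $x^3+2x^2+3x+4$ satisfying no spurious relations such as $\alpha+\beta+3=0$ or $\gamma^2=\alpha\beta$; these are readily excluded using $\alpha+\beta+\gamma=-2$, $\alpha\beta+\beta\gamma+\gamma\alpha=3$, $\alpha\beta\gamma=-4$.
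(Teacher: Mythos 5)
Your proposal is correct and follows essentially the same route as the paper: both pass to the $120$ tritangent planes via the Harris--Len correspondence, split them into Edge's two classes, and apply orbit--stabiliser with stabilisers $S_{\{i,j,k\}}\cong S_3$ (giving three orbits of $20$, one per distinguished root) and a single transposition (giving one orbit of $60$). The only differences are cosmetic: the paper computes the stabilisers of the tangency divisors $T^{(1)}_{\alpha jk}$ and $T^{(2)}_{ijk}$ rather than of the planes themselves, and omits the (harmless but welcome) check that the listed planes are pairwise distinct.
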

\begin{proof}
The characteristics coming from the triangent planes are 
\begin{align*}
T_{\alpha jk}^{(1)} &= \sum_{\substack{i=1 \\ i \neq j,k}}^5 W_{ijk}  , \\
T_{ijk}^{(2)} &= W_{ijk} + O_{ijk}^+ + O_{ijk}^-  . 
\end{align*}
Here we define the points $O_{ijk}^\pm$ by, for example, 
\[
O_{345}^\pm = \left [ \frac{1 \pm i\sqrt{15}}{2} : \frac{1 \mp i\sqrt{15}}{2} : \alpha^2 + \alpha + 1, \beta^2 + \beta + 1 , \gamma^2 + \gamma + 1 \right ] .
\]
A simple orbit-stabiliser argument then gives the orbit decomposition as, for example, $T_{\alpha 45}^{(1)}$ is stabilised by the symmetric group $S_{\{ 1,2,3 \}}$ and $T_{345}^{(2)}$ is stabilised by $S_{\{1,2\}}$. 
\end{proof}
\begin{remark}
While we were able to proceed analytically here characterising the orbits of the odd characteristics using the  work of earlier authors who had identified the tritangent planes of the curve we also may obtain the orbit structure numerically using Sage. Given a homology representation of the automorphism group, the algorithm to compute the orbits is exact, but the current implementation in Sage requires floating point arithmetic to calculate this representation. 
\end{remark}
\subsection{Even Characteristics}
In the previous subsection, we were able to fully characterise the odd theta characteristics on Bring's curve without too much difficulty through the use of existing work giving the stalls of the canonical embedding. The story for even characteristics is different, as the line bundle corresponding to a generic even theta characteristic has no sections, which is an obstacle to their study \cite{Takagi2009}. In \cite{Dolgachev1993} the authors were able to use the Scorza correspondence to calculate the orbit decomposition of even spin structures on Klein's curve, but at the time of writing the theory of the Scorza map has not been developed far enough to cover Bring's curve. The Scorza quartic necessary to extend Dolgachev and Kanev's work beyond $g=3$, wherein it becomes a codimension-1 hypersurface in $\mathbb{P}^{g-1}$, has been proven to exist for a generic even spin curve (that is a pair consisting of a curve and an even theta characteristic on it); it has only been explicitly found for trigonal curves in $g=4$ \cite{Takagi2009}. In \cite{Burns1983}, motivated by its realisation via an elliptic modular surface, Burns identified a theta characteristic on Bring's curve invariant under the $A_5$ subgroup of the automorphism group, though he described this characteristic only in terms of two line bundles on the curve, not directly in terms of points on the curve. We shall now fully classify the orbits of the even characteristics and give an explicit description of Burns' divisor in the process.

We have two distinct methods of probing the orbit decomposition of the theta characteristics on Bring's curve.
\begin{enumerate}
    \item Use the method of \cite{Kallel2006}, wherein theta characteristics are identified with vectors in $\mathbb{F}_2^{2g}$, and the action of automorphisms given by the homology representation of the automorphism group of the curve as found using the methods of \cite{Bruin2019}. 
    \item Identify theta characteristic with the $2^{2g}$ translates by half-lattice vectors of a half-canonical vector in the Jacobian of the curve, done using the implementation of the Abel-Jacobi map developed by Disney-Hogg \cite{DisneyHogg2021}, and the action of automorphisms given by the cohomology representation of the automorphism group of the curve as found using the methods of \cite{Bruin2019}.
\end{enumerate}
The two methods have different strengths, namely that the first uses exact computations rather than approximations with multi-precision arithmetic, but the second can concretely relate theta characteristics from their representation to an actual divisor (class). Both methods not only verify Corollary \ref{cor: Orbit decomposition of odd theta characteristics} but also gives the following result. 
\begin{theorem}
The orbit decomposition of even theta characteristics on Bring's curve is 
\begin{equation}\label{eq:evencharacteristics}
136 = 1 + 5+5+5+10+10+10+30+30+30  . 
\end{equation}
\end{theorem}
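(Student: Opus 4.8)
The plan is to reduce the statement to a finite, and in principle exact, orbit computation for the induced action of $S_5$ on the $2$-torsion of the Jacobian. First I would record the count: since $g=4$ there are $2^{g-1}(2^g+1) = 2^3\cdot 17 = 136$ even theta characteristics, matching the left-hand side. The set of all theta characteristics is a torsor under $J[2]\cong \mathbb{F}_2^{8}$, and the natural bijection between theta characteristics and quadratic forms $q$ refining the Weil pairing (the symplectic form on $J[2]$) is equivariant for $\Aut(\mathcal{B})$. Under this bijection the parity is the Arf invariant of $q$, which is preserved by the symplectic action, so the even and odd characteristics form separate $S_5$-invariant sets of sizes $136$ and $120$. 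Thus it suffices to compute the orbits of $S_5$ acting through $\bar\rho\colon S_5 \to Sp(8,\mathbb{F}_2)$ on the $136$ even forms.

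The key input is the mod-$2$ symplectic representation $\bar\rho$. As a rational representation, $H_1(\mathcal{B},\mathbb{Q})$ is two copies of the standard $4$-dimensional irreducible representation of $S_5$: the space of holomorphic differentials spanned by $v_1,\dots,v_4$ already carries the standard representation (as used in \S\ref{subsec: automorphism group}), so $H^{1,0}\cong V_{\mathrm{std}}$ and, since $V_{\mathrm{std}}$ is self-conjugate, $H_1\otimes\mathbb{C}\cong V_{\mathrm{std}}\oplus\overline{V_{\mathrm{std}}}\cong V_{\mathrm{std}}^{\oplus 2}$. To pin down the integral (hence mod-$2$) action together with the symplectic form, I would compute the homology representation explicitly via the methods of \cite{Bruin2019} as implemented in Sage; although this passes through the numerically-obtained period matrix, the resulting matrices are integral and can be recognised exactly. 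Reducing mod $2$ gives $\bar\rho$, after which enumerating the $136$ even quadratic forms and computing their $S_5$-orbits (following the identification of \cite{Kallel2006}) is a purely finite, exact calculation, yielding the partition $136 = 1+5+5+5+10+10+10+30+30+30$.

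As an independent check I would run the second method: starting from the half-canonical vector $\mathcal{A}_Q(\Delta)$ attached to the even characteristic $\Delta = 3a+b-c$ of Example \ref{example: theta characteristic on Bring's curve}, form its $2^{8}$ translates by half-periods, determine parity numerically from the theta function, and let the \emph{cohomology} representation act via the Abel-Jacobi map of \cite{DisneyHogg2021}; this reproduces the same orbit sizes and, crucially, realises the orbit of size $1$ as an explicit divisor class, namely the unique $A_5$-invariant (Burns') characteristic, which one then verifies is in fact fixed by all of $S_5$. Every orbit size divides $120$, and a consistency check via orbit-stabiliser (index-$5$ stabilisers of order $24$ for the orbits of size $5$, order-$12$ stabilisers for the size $10$ orbits, and order-$4$ stabilisers for size $30$) is immediate. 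The main obstacle is the middle step: certifying the exact mod-$2$ symplectic matrices, since the geometric computation of how the automorphisms permute a chosen homology basis is the delicate, floating-point-assisted part; once $\bar\rho$ is fixed, the remaining orbit enumeration is routine and exact.
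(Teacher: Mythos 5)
Your proposal matches the paper's own argument: the theorem is established computationally by exactly the two methods you describe, namely (i) the Kallel--Sjerve identification of characteristics with vectors in $\mathbb{F}_2^{2g}$ acted on through the homology representation obtained via the methods of Bruin--Sijsling--Zotine, and (ii) the Abel--Jacobi/half-period translate method, with the size-one orbit realised as Burns' invariant characteristic. Your caveat about the floating-point step in obtaining the homology representation is also the caveat the paper itself records, so there is nothing essentially new or missing here.
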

\begin{corollary}
Bring's curve has a unique theta characteristic invariant under the action of the automorphism group, which is also the theta characteristic invariant under the $A_5$ subgroup found in \cite{Burns1983}.
\end{corollary}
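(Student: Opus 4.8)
The plan is to derive the corollary directly from the two orbit decompositions already in hand, by a short orbit--stabiliser count; no new geometry is needed. First I would read off from (\ref{eq:evencharacteristics}) that the even theta characteristics fall into exactly one $\Aut(\mathcal{B})$-orbit of size $1$. An orbit of size $1$ is precisely a theta characteristic fixed by every element of $\Aut(\mathcal{B}) = S_5$, that is, an $S_5$-invariant characteristic. Turning to the odd case, the decomposition (\ref{eq:oddcharacteristics}) reads $120 = 20 + 20 + 20 + 60$ and contains no orbit of size $1$, so no odd theta characteristic is invariant. Together these show that there is one and only one theta characteristic invariant under the full automorphism group, and that it is even; this establishes the existence and uniqueness claim.

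For the identification with Burns' characteristic I would argue as follows. If $\theta$ is any $A_5$-invariant theta characteristic, then $A_5 \subseteq \operatorname{Stab}_{S_5}(\theta)$; since the only subgroups of $S_5$ containing $A_5$ are $A_5$ and $S_5$, the $S_5$-orbit of $\theta$ has size $[S_5 : \operatorname{Stab}_{S_5}(\theta)] \in \{1, 2\}$. Inspecting both decompositions --- the even sizes $1,5,5,5,10,10,10,30,30,30$ and the odd sizes $20,20,20,60$ --- the only orbit of size at most $2$ is the even singleton. Hence every $A_5$-invariant theta characteristic is in fact $S_5$-invariant, there is a unique such characteristic, and it is the even one found above. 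As Burns exhibited an $A_5$-invariant theta characteristic, this uniqueness forces it to coincide with ours.

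Granting the theorem on even characteristics --- which carries all the real content and is assumed here --- the corollary is purely combinatorial and poses no serious difficulty. The one place where I would take care is the final matching step: one must check that Burns' characteristic is $A_5$-invariant in the same sense used for our orbit computation, namely as a divisor class stable under the induced $A_5$-action on $\Jac(\mathcal{B})$, so that the uniqueness argument applies without modification. To make the identification fully explicit I would locate the invariant class concretely, for instance by testing the even characteristic $\Delta = 3a + b - c$ of Example \ref{example: theta characteristic on Bring's curve} against the cohomology representation of $S_5$ used in method (2), and comparing with Burns' description in terms of line bundles.
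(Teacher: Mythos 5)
Your proposal is correct and is essentially the argument the paper intends: the corollary follows immediately from the two orbit decompositions, with the unique size-one orbit giving existence and uniqueness, and the observation that an $A_5$-invariant characteristic must lie in an orbit of size $1$ or $2$ (since $A_5$ is the unique proper subgroup of $S_5$ containing $A_5$) forcing Burns' characteristic to coincide with it. The paper states the corollary without a separate proof, and your orbit--stabiliser count supplies exactly the missing combinatorial details.
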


The existence of the unique invariant theta characteristic was known in \cite{Braden2012}, but it had not been identified. This we rectify with the following result. 

\begin{theorem}\label{invarianttheta}
The theta characteristic $\Delta$ (defined in Example \ref{example: theta characteristic on Bring's curve}) is the unique invariant theta characteristic in on Bring's curve. 
\end{theorem}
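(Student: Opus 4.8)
We want to show that $\Delta = 3a+b-c$ is the unique theta characteristic invariant under the full automorphism group $S_5$. We already know from Example \ref{example: theta characteristic on Bring's curve} that $\Delta$ is an even theta characteristic, and from the preceding Corollary that there is a \emph{unique} theta characteristic fixed by the whole $\Aut(\mathcal{B})$-action. Thus the entire content of the theorem reduces to verifying that this specific divisor class $\Delta$ is the invariant one; uniqueness is already in hand.

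\medskip

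The plan is to show that the divisor class $[\Delta]$ is preserved by each generator of $\Aut(\mathcal{B}) = \pangle{R,S,U}$ acting on the HC-model. Since $\Delta = 3a+b-c$ is written in terms of the distinguished points $a,b,c,d$ whose local expansions are recorded in Lemma \ref{lemma: local expansion in Bring's curve}, the first step is to determine how the generators $R$, $S$, and $U$ permute (or otherwise move) these points and to compute the pullback of $[\Delta]$ under each. For the automorphism $U$ this is especially clean: the proof of Proposition \ref{prop: U is an automorphism} already records the orbit $a \mapsto c \mapsto b \mapsto d \mapsto a$, so $U^*(3a+b-c) = 3c + d - b$, and one must check $3c+d-b \sim 3a+b-c$, i.e.\ that $3a - 3c + 2b - d - \mathrm{something}$ is principal. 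This is where the explicit divisor computations from the Corollary after Lemma \ref{lemma: local expansion in Bring's curve} enter: using $\div(x) = 3a+2b-4c-d$ and $\div(y)=a-b-3c+3d$ one can exhibit the required principal divisor as an explicit combination, reducing each invariance check to a linear-algebra identity among $\{a,b,c,d\}$ modulo principal divisors coming from $x$, $y$, and the canonical class $[\mathcal{K}_\mathcal{B}] = [a+2b+3c]$ of Proposition \ref{prop: Canonical divisor of Bring's curve}.

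\medskip

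Concretely, I would carry out the steps in the following order. First, fix the action of each generator on the four points $a,b,c,d$ (for $R$ and $S$, whose action on $[X:Y:Z]$ is linear via the representation $\pangle{R,S}$ of Theorem \ref{autA5}, this is immediate by substitution into the local expansions; for $U$ it is the orbit already noted). Second, compute $g^*[\Delta]$ for $g \in \{R,S,U\}$ as a $\mathbb{Z}$-linear combination of $a,b,c,d$. Third, verify $g^*[\Delta] = [\Delta]$ in the divisor class group by exhibiting the difference as an explicit principal divisor in the free $\mathbb{Z}$-module generated by $\div(x)$ and $\div(y)$; equivalently, one checks equality in $\operatorname{Pic}(\mathcal{B})$ by confirming the two divisors have the same image under the Abel-Jacobi map, which is the numerical route available in Sage. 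Since $R,S,U$ generate $S_5$, invariance under the generators gives invariance under the whole group, and combined with the uniqueness from the Corollary this identifies $\Delta$ as \emph{the} invariant theta characteristic.

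\medskip

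The main obstacle is step three: showing the class equality $g^*[\Delta] = [\Delta]$ rather than mere equality of the raw divisors, since $g$ genuinely moves the points $a,b,c,d$ among themselves and the invariance holds only modulo principal divisors. The cleanest rigorous route is to reduce everything to the relations among $a,b,c,d$ already forced by $\div(x)$, $\div(y)$, and $[\mathcal{K}_\mathcal{B}]$, and then observe that these relations pin down the class $[\Delta]$ uniquely among half-canonical classes supported on this configuration; alternatively, as the accompanying notebooks do, one verifies the single identity $g^*[\Delta]-[\Delta] = \div(h_g)$ for an explicit rational function $h_g$ for each generator. Either way the computation is finite and mechanical once the point-permutations are fixed, so the conceptual weight rests entirely on organizing the divisor-class bookkeeping correctly.
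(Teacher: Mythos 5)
Your reduction to showing invariance under the generators, and your treatment of $S$ and $U$, match the paper's proof: the paper likewise checks that $S$ fixes each of $a,b,c,d$ via the local expansions and that $U$ cycles them as $a\mapsto c\mapsto b\mapsto d\mapsto a$, so that $U^*\Delta-\Delta=-\div(x)$. The genuine gap is in your handling of $R$. You assert that the action of $R$ on $a,b,c,d$ is ``immediate by substitution'' and that $R^*[\Delta]$ can be computed ``as a $\mathbb{Z}$-linear combination of $a,b,c,d$'', with the difference from $[\Delta]$ then exhibited inside the $\mathbb{Z}$-module generated by $\div(x)$ and $\div(y)$. This fails: $R$ does not preserve the set $\{a,b,c,d\}$. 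For instance $R([0:0:1])=[2:\zeta+\zeta^{4}:\zeta^{2}+\zeta^{3}]$, which is one of the other face-centres listed in Proposition \ref{FCprop}, not one of $a,b,c,d$. Consequently $R^*\Delta-\Delta$ is supported partly outside $\{a,b,c,d\}$, while $\div(x)$ and $\div(y)$ are supported entirely on $\{a,b,c,d\}$, so no combination of them can witness the linear equivalence. This is exactly the step the paper flags as difficult, and your proposed mechanism cannot carry it out; your fallback of producing ``an explicit rational function $h_R$'' is precisely the hard part and you give no way to find it. (A purely numerical Abel--Jacobi check would work but then the argument is no longer the clean divisor bookkeeping you describe.)

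The paper avoids $R$ entirely by an orbit-counting argument: under the order-$5$ automorphism $S$ every orbit of theta characteristics has size $1$ or $5$, and the decompositions are $120=24\times 5$ and $136=1+27\times 5$, so there is a \emph{unique} $\pangle{S}$-invariant theta characteristic. Since the $S_5$-invariant characteristic guaranteed by the preceding corollary is in particular $S$-invariant, and you have already shown $\Delta$ is $S$-invariant, the two must coincide --- no computation with $R$ (or even $U$) is needed. You should either adopt this counting step or supply a concrete method for verifying $R^*\Delta\sim\Delta$ on points outside $\{a,b,c,d\}$; as written, the proposal does not close the argument.
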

\begin{proof}
We first consider the action of $S$ on $a$, $b$, $c$, $d$. We have
\begin{align*}
a:=[0:0:1]&\simeq[2t^3:t:1]\xrightarrow{S}[2t^3:\zeta t:\zeta^{-1}]=[2 \zeta t^3:\zeta^2 t:1]
=[2(\zeta^2 t)^3:\zeta^2 t:1]\\& =[2\epsilon^3:\epsilon:1],\\
 b:=[0:1:0]&\simeq[2t^2:1/t:1]\xrightarrow{S}[2t^2:\zeta/ t:\zeta^{-1}]=
 [2 \zeta t^2:\zeta^2/ t:1]=[2(t/{\zeta^2})^2:\zeta^2/ t:1]\\ 
 c:=[1:0:0]_2&\simeq[1:t:t^4]\xrightarrow{S}[1:\zeta t:\zeta^{-1} t^4]=[1:\zeta t:(\zeta t)^4],\\
 d:=[1:0:0]_1&\simeq[1:t^4:t]\xrightarrow{S}[1:\zeta t^4:\zeta^{-1} t]=[1:(\zeta^{-1} t)^4: \zeta^{-1} t].
\end{align*}
Thus $a$, $b$, $c$ and $d$ are invariant under the symmetry $S$ and consequently $\Delta= 3a+b-c$ is also invariant.

Similarly, as mentioned in Proposition \ref{prop: U is an automorphism}, the action of $U$ on $a,b,c,d$ can be calculated as 
\[
a \mapsto c \mapsto b \mapsto d \mapsto a  ,
\]
and so 
\[
\Delta  = 3a+b-c \mapsto 3c+d-b = 3a+b-c - (3a+2b-4c-d) = \Delta - (x) \sim \Delta  .
\]
We have thus shown that $\Delta$ is invariant under $\pangle{S, U}$. To complete the proof that $\Delta$ is the invariant theta characteristic, one could attempt to show that $\Delta$ is invariant under the action of $R$ by direct computation, but this proves to be difficult. It is instead better to check in Sage that the unique spin structure invariant under the whole automorphism group is actually also the unique spin structure invariant under the subgroup generated by $S$ and $U$. In fact, by Theorem 1.2 of \cite{Kallel2006} and our work in \S\ref{sec: quotients by subgroups}, we know there is a unique theta characteristic invariant under $\pangle{S}$\footnote{Under the action of $S$, the orbit decompositions are $120=24\times 5$ and $136 = 1+27\times 5$. }, and this completes the proof. 
\end{proof}
This proof strategy is similar to the identification of the unique invariant theta characteristic on Klein's curve in \cite{Kallel2006}.\footnote{The final part of the proof in that paper, showing invariance under the order-2 generator of $PSL_2(\mathbb{F}_7)$, is in our opinion incomplete; Kallel (private correspondence) believes our reasoning correct. The theorem remains correct nevertheless, as one can check using their methodology that invariance under the order-7 generator is enough to specify the unique invariant spin structure, as the decompositions are $28=4\times 7$ and $36 =1 + 5 \times 7$. It is a curious coincidence that in both cases it was a generator of order 2 for which the direct calculation was difficult.} 

We now want to make the connection to \cite{Burns1983}. Recall Lemma \ref{quaddif} and Proposition \ref{prop: quadric of canonical embedding isomorphic to PxP}. This gives us two degree-3 maps $f_i : \mathcal{B} \to \mathbb{P}^1$, namely $f_i = \left. \pi_i \circ \varphi^{-1}\right\rvert_{\mathcal{B}} $, $i=1,2$, where $\varphi$ was the isomorphism $\mathbb{P}^1 \times \mathbb{P}^1 \to \mathcal{Q}$, and $\pi_i$ the projection to the two factors of $\mathbb{P}^1 \times \mathbb{P}^1$. What are the corresponding divisors? Working in the $L_a$ coordinates, we can use Sage to find that $f_1^{-1}([1:0]) = 2[0:0:0:1]+[1:0:0:0] = 2b+c := L^\prime$, while $f_2^{-1}([1:0]) = 2[0:1:0:0]+[0:0:0:1]= 2d+b := L$.
\begin{prop}\label{burnsLs}
The divisors $L$, $L^\prime$ satisfy the properties described in \cite{Burns1983}, namely 
\begin{align*}
    \Delta &\sim 3(L^\prime -L) + L, \quad 
    \mathcal{K}_\mathcal{B} \sim L+L^\prime, \quad  
    0 \sim 5(L^\prime - L)  . 
\end{align*}
\end{prop}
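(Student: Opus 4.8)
The plan is to carry out the entire verification inside the divisor class group $\operatorname{Pic}(\mathcal{B})$, reducing each of the three asserted equivalences to manipulations of the principal divisors $\div(x)=3a+2b-4c-d$ and $\div(y)=a-b-3c+3d$ recorded after Lemma \ref{lemma: local expansion in Bring's curve}, together with the value $\mathcal{K}_\mathcal{B}\sim a+2b+3c$ from Proposition \ref{prop: Canonical divisor of Bring's curve}. Substituting $L=b+2d$, $L'=2b+c$ and $\Delta=3a+b-c$, the three claims become, respectively, the equivalences $a-b+2c-2d\sim 0$, $3a-3b-4c+4d\sim 0$, and $5b+5c-10d\sim 0$.

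First I would dispatch the torsion relation directly. Since $\div(x/y^3)=\div(x)-3\div(y)=5b+5c-10d=5(L'-L)$, we obtain $5(L'-L)\sim 0$ at once. This is the one genuinely global input, namely that $L'-L$ is a $5$-torsion class, and it is made transparent by exhibiting the explicit function $x/y^3$.

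For the remaining two relations I would extract two auxiliary equivalences. From $\div(y)$ we read off $a-b\sim 3(c-d)$, and comparing the two available expressions for the canonical class, $a+2b+3c$ from Proposition \ref{prop: Canonical divisor of Bring's curve} and $2\Delta=6a+2b-2c$ from Example \ref{example: theta characteristic on Bring's curve}, gives $5(a-c)\sim 0$. Combining these with $5(L'-L)\sim 0$ yields the key equivalence $5(c-d)\sim 0$: using $\div(y)$ one writes $a-c\sim (b-d)+2(c-d)$, so $5(a-c)\sim 0$ becomes $5(b-d)+10(c-d)\sim 0$, and subtracting $5(b-d)+5(c-d)\sim 0$ (which is $5(L'-L)\sim 0$) leaves $5(c-d)\sim 0$. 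With this in hand both remaining claims fall out in a line each: $a-b+2c-2d\sim 3(c-d)+2(c-d)=5(c-d)\sim 0$ proves $\mathcal{K}_\mathcal{B}\sim L+L'$, and $3a-3b-4c+4d=3(a-b)-4(c-d)\sim 9(c-d)-4(c-d)=5(c-d)\sim 0$ proves $\Delta\sim 3(L'-L)+L$.

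I expect no serious obstacle, as this is elementary linear algebra in the class group once the generating relations are assembled. The only point needing care, and the natural main step, is the torsion bookkeeping: the principal divisors coming from $x$, $y$ and their images under the automorphism $U$ a priori give only $10(c-d)\sim 0$, and it is the extra input $5(a-c)\sim 0$ from the identification of the canonical class that sharpens this to the required $5(c-d)\sim 0$. As an independent cross-check I would note that the first relation also follows conceptually by adjunction on $\mathcal{Q}\cong\mathbb{P}^1\times\mathbb{P}^1$: the curve has bidegree $(3,3)$, so $\mathcal{K}_\mathcal{B}=\mathcal{O}(1,1)|_\mathcal{B}=\mathcal{O}(1,0)|_\mathcal{B}+\mathcal{O}(0,1)|_\mathcal{B}=L+L'$, after which the torsion relation $5(L'-L)\sim 0$ follows formally from $\mathcal{K}_\mathcal{B}\sim L+L'$, $\Delta\sim 3(L'-L)+L$ and $2\Delta\sim\mathcal{K}_\mathcal{B}$.
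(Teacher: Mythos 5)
Your verification is correct and is precisely the ``straightforward verification from the definitions'' that the paper's one-line proof alludes to: you substitute $L=b+2d$, $L'=2b+c$, $\Delta=3a+b-c$, $\mathcal{K}_\mathcal{B}\sim a+2b+3c$, and reduce all three equivalences to the principal divisors of $x$ and $y$ together with the relation $2\Delta\sim\mathcal{K}_\mathcal{B}$ from Example \ref{example: theta characteristic on Bring's curve}. Your explicit route through the intermediate relation $5(c-d)\sim 0$, and the exhibition of $x/y^{3}$ as the function realising $5(L'-L)\sim 0$, supply details the paper omits, but the approach is the same.
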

\begin{proof}
This is straightforward verification from the definitions. 
\end{proof}
Note we can connect this back to the degree-3 map given by Klein. We know from \cite{Weber2005} that we expect this map to be branched at face-centres of the $\pbrace{5, 5  |  3}$ tessellation, and these come from face-centres of the $\pbrace{5,4}_6$ tessellation, and indeed recall we saw that $a,b,c,d$ were face-centres. 

An additional characterisation of the invariant theta characteristic is given by the following result. 
\begin{prop}\label{szegoinvt}
In the homology basis of \cite{Riera1992}, the RCV satisfies \[
K_a = \frac{1}{10}(
3 , 2 , -2 , -3
) + \Im(\tau_0)(
1 , -2 , -2 , 1
)i  = \mathcal{A}_a(\Delta) . 
\]
As such, in the R\&R homology basis, the unique invariant theta characteristic is the divisor (class) of the Sz\"ego kernel, i.e. $\Delta = \Delta_S$. 
\end{prop}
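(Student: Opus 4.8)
The plan is to combine Fay's theorem with Abel's theorem. Since $K_a = \mathcal{A}_a(\Delta_S)$ characterises $\Delta_S$ as the unique theta characteristic whose Abel--Jacobi image from $a$ is the RCV, and since the Abel--Jacobi map is injective on degree-$(g-1)$ divisor classes, it suffices to verify the single identity $\mathcal{A}_a(\Delta) = K_a$ in the Jacobian $\mathbb{C}^4/(\mathbb{Z}^4 + \tau_\mathcal{B}\mathbb{Z}^4)$. A priori both sides are known to agree only up to a two-torsion point, since each is a square root of $\mathcal{A}_a(\mathcal{K}_\mathcal{B})$; the real content of the proposition is that this ambiguity vanishes in the R\&R basis.

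First I would compute the left-hand side. Because the base point is $a$, the divisor of Example \ref{example: theta characteristic on Bring's curve} gives $\mathcal{A}_a(\Delta) = \mathcal{A}_a(3a+b-c) = \mathcal{A}_a(b) - \mathcal{A}_a(c)$, reducing the task to two path integrals between the explicitly known points $a$, $b$, $c$ of Lemma \ref{lemma: local expansion in Bring's curve}. These I would evaluate with the Abel--Jacobi implementation of \cite{DisneyHogg2021}, taking care that the output is expressed relative to the R\&R period matrix $\tau_\mathcal{B} = \tau_0 M$ rather than Sage's internally computed basis; the symplectic transformation $R$ and the canonicalising matrix $C$ of \S\ref{subsec: period matrix} supply the dictionary between the two conventions.

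Next I would compute the RCV directly from Definition \ref{def: AJ map and RCV}. This requires normalising the R\&R differentials against the $a$-cycles (inverting the relevant period block) and then evaluating the sum $\sum_{k \neq j} \oint_{a_k} \omega_k\, \mathcal{A}_a(\cdot)_j$. Comparing the resulting value against the closed form $\tfrac{1}{10}(3,2,-2,-3) + \Im(\tau_0)(1,-2,-2,1)i$, and checking that it matches $\mathcal{A}_a(b) - \mathcal{A}_a(c)$ to high precision---equivalently, that the difference of the two computed points lies in the period lattice---establishes $\mathcal{A}_a(\Delta) = K_a$. The conclusion $\Delta = \Delta_S$ is then immediate from Fay's theorem \cite{Fay1973} and injectivity of Abel--Jacobi, and the asserted identification with the Sz\"ego-kernel divisor follows.

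The main obstacle will be bookkeeping of conventions rather than any conceptual difficulty: the RCV is genuinely homology-dependent, as emphasised at the end of \S\ref{subsec: AJ and RCV}, so the whole calculation must be carried out consistently in the R\&R basis, and one must confirm that the transcendental quantities $\mathcal{A}_a(b)$, $\mathcal{A}_a(c)$ together with the normalising periods combine into exactly the stated rational-plus-$\Im(\tau_0)$ imaginary vector. Because the claimed equality is between a numerically computed half-period and a closed-form half-period, the delicate step is verifying that their difference is an integral lattice vector and not merely a two-torsion point, which is precisely what upgrades the generic two-torsion coincidence to the genuine equality $\Delta = \Delta_S$.
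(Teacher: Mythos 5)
Your proposal is correct and follows essentially the same route as the paper: both equalities are verified numerically to high precision in the R\&R homology basis (Abel--Jacobi image of $\Delta$ via \cite{DisneyHogg2021}, the RCV via the method of \cite{Deconinck2015}), with the identification $\Delta=\Delta_S$ then following from Fay's theorem. The only minor difference is that the paper takes the closed form of $K_a$ as already established analytically in \cite{Braden2012} (using the numerics only as a check), whereas you propose to recompute the RCV directly from Definition \ref{def: AJ map and RCV}; your remarks on the two-torsion ambiguity and on keeping all conventions in the R\&R basis correctly identify the delicate points.
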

\begin{proof}
The first equality is shown analytically in \cite{Braden2012}. The second is shown numerically in the corresponding notebooks, using the Abel-Jacobi map developed by Disney-Hogg \cite{DisneyHogg2021}. To verify the RCV, we implemented the methodology of \cite{Deconinck2015} using the theta function in Sage developed by Bruin and Ganjian \cite{Bruin2021}. While these calculations are numerical in nature, the calculations can be done with arbitrary binary precision. We were satisfied by calculating with 400 binary digits of precision, giving an absolute error of less than $10^{-118}$ for the first equality, and of less than $10^{-23}$ for the second equality.  
\end{proof}

\section{Conclusion}\label{sec: conclusion}
We have now seen that Bring's curve has many interesting properties and appears in a number of quite different settings; with this background now established we can summarise the main new results of the paper. We have:
\begin{enumerate}
    \item Constructed an explicit birational map between the HC- and $\mathbb{P}^4$-model of the curve, Proposition \ref{prop: HC valid model for Bring}.
    \item Constructed an explicit transform (\ref{transRRW}) between the period matrix of Weber and that of R\&R.
    \item Provided a complete realisation of the automorphism group of the curve in the HC-model (Propositions \ref{autA5}
    and
    \ref{prop: U is an automorphism}) and the canonical model (\ref{can12},\ref{can145}).
    \item Unified the functional, algebraic and geometric pictures of the distinguished orbits on the curve (those of size 24, 30, and 60) by explicitly identifying these points (Propositions \ref{WPprop},\ref{FCprop},\ref{Vprop}) and identifying the divisors of the holomoprhic differentials and meromorphic functions associated to the Weierstrass points (Proposition \ref{prop: function and divisor associated to WP}).
    \item Completed the classification of, and relations between, the quotients of Bring's curve (Figures \ref{eq: structure of quotient curves} and \ref{eq: structure of quotient groups}), finding a previously unidentified elliptic curve in the process (\ref{ref: quotient by 3cycle}). We identify one of the known quotient curves with the modular curve $X_0(50)$.
    \item Proven in a new way the $\mathbb{Q}$-isogeny class of the Jacobian
    (Proposition \ref{prop: JB Q-isogeny class}) and established a number of isogenies between (products of) the various Prym varieties associated with the quotients (\ref{isogenies}).
    \item Computed the orbit decomposition of both the odd and even theta characteristics on the curve (\ref{eq:oddcharacteristics},\ref{eq:evencharacteristics}) and determined the unique invariant theta characteristic (Proposition \ref{invarianttheta}) identifying this with the Sz\"ego divisor (Proposition \ref{szegoinvt}) and a divisor identified by Burns (Proposition \ref{burnsLs}).
\end{enumerate}
At various points we have corrected the literature.

In addition to our written account there are the accompanying notebooks.
Throughout 
we have provided examples of how modern computational tools can simplify calculation and provide insight to aid discovery. The
notebooks utilise SageMath, Maple, and Macaulay2 (all interfaced through Sage) and the code may be easily modified for other curves.

Finally we would draw attention to two areas that (for us) merit attention. First are the isomorphisms in Figure \ref{eq: structure of quotient curves} we have been unable to explain within the quotient structure of the curve. Perhaps this is something that the modular theory of the Bring's curve can shed light on. 
The second area deals with the orbit decompositions of the  theta characteristics. In the case of even characteristics our construction was purely numeric and not analytic: a solution to this  likely involves extending the theory of the Scorza quartic. Further, for both even and odd characteristics, there is a \lq\lq threeness \rq\rq we observe that we have found no satisfying reason for. We look forward to the clarification of these.


\providecommand{\bysame}{\leavevmode\hbox
to3em{\hrulefill}\thinspace}

\newcommand{\etalchar}[1]{$^{#1}$}
\providecommand{\bysame}{\leavevmode\hbox to3em{\hrulefill}\thinspace}
\providecommand{\MR}{\relax\ifhmode\unskip\space\fi MR }
\providecommand{\MRhref}[2]{%
  \href{http://www.ams.org/mathscinet-getitem?mr=#1}{#2}
}
\providecommand{\href}[2]{#2}

\end{document}